\title{Genus, Fiberedness, $\tau$ and $\epsilon$ of Satellite Knots with $n$-Twisted Generalized Mazur patterns}
\author{Holt Bodish}
\date{}
\theoremstyle{plain}
\newtheorem{theorem}{Theorem}[section]
\newtheorem{lemma}[theorem]{Lemma}
\newtheorem{definition}[theorem]{Definition}
\newtheorem{corollary}[theorem]{Corollary}
\newcommand{\HFhat}{\widehat{\mathrm{HF}}}
\newcommand{\CFAhat}{\widehat{\mathrm{CFA}}}
\newcommand{\txi}{\tilde{\xi}}
\newcommand{\R}{\mathbb{R}}
\newcommand{\Z}{\mathbb{Z}}
\newcommand{\Q}{\mathbb{Q}}
\newcommand{\F}{\mathbb{F}}
\newcommand{\Ord}{\mathrm{Ord}}
\newcommand{\HFK}{\mathrm{HFK}}
\newcommand{\rank}{\text{rank}}
\newcommand{\CFDhat}{\widehat{\mathrm{CFD}}}
\newcommand{\HFKhat}{\widehat{\mathrm{HFK}}}
\newcommand{\teta}{\tilde{\eta}}
\newcommand{\CFKhat}{\widehat{\mathrm{CFK}}}
\newcommand{\rk}{\text{rk}}
\newcommand{\CFK}{\mathrm{CFK}}
\newcommand{\tikzcirc}[2][black,fill=white]{\tikz[baseline=-0.5ex]\draw[#1,radius=#2] (0,0) circle ;}
\newcommand{\tikzcircle}[2][black,fill=black]{\tikz[baseline=-0.5ex]\draw[#1,radius=#2] (0,0) circle ;}
\newcommand{\tikzcirclee}[2][black,fill=green]{\tikz[baseline=-0.5ex]\draw[#1,radius=#2] (0,0) circle ;}
\newcommand{\talpha}{\tilde{\alpha}}
\newcommand{\tbeta}{\tilde{\beta}}
\begin{document}
\maketitle
\begin{abstract}
    We study a family of $(1,1)$-pattern knots that generalize the Mazur pattern, and compute the concordance invariants $\tau$ and $\epsilon$ of $n$-twisted satellites formed from these patterns. We show that none of the $n$-twisted patterns from this family act surjectively on the smooth or rational concordance group. We also determine when the $n$-twisted generalized Mazur patterns are fibered in the solid torus, compute their genus in $S^1 \times D^2$, and show that $n$-twisted satellites with generalized Mazur patterns and non-trivial companions are not Floer thin. 
\end{abstract}

\section{Introduction}\label{Mazursection}

In this paper, we compute the genus and determine the fiberedness and the Heegaard Floer concordance invariants $\tau$ and $\epsilon$ of satellite knots with arbitrary companions $K$ and patterns from a family of knots in the solid torus, which we denote $Q_n^{i,j}$, shown in Figure \ref{Qij}. Here $j \in \Z_{>0}$ is the winding number of the pattern, $n \in \Z$ is the number of full twists around the meridian, and $i \in \Z_{\geq 0}$ is the number of full twists added to the clasp region in the box labelled $i$ in Figure \ref{Qij}. We refer to the patterns $Q^{i,j}_n$ as \emph{$n$-twisted generalized Mazur patterns}, since $Q_0^{0,1}$ is the Mazur pattern and $Q^{i,1}_0$ is a generalized Mazur pattern in analogy with the generalized Whitehead doubles of \cite{Truong}. (See recent work of \cite{2024generalized} for a different family of patterns also called generalized Mazur patterns.) Given a knot $K$, the satellite knot with $n$-twisted generalized Mazur pattern $Q^{i,j}_n(K)$ can either be viewed as a $0$-twisted satellite with pattern $Q^{i,j}_n$ or as an $n$-twisted satellite with pattern $Q^{i,j}_0$. In this paper, we mostly adopt the latter perspective.


In \cite{Levinemazur}, Levine computed $\tau$ and $\epsilon$ of $0$-twisted satellites with Mazur pattern and arbitrary companions by explicitly determining the bordered bimodule $\widehat{\mathrm{CFDA}}(X_Q)$ associated to the complement of the Mazur pattern in the solid torus and using the bordered pairing theorem of \cite{LOT}. Levine used this to compute $\tau$ and $\epsilon$ of $0$-twisted satellites with Mazur pattern. 
More recently, in \cite{chenhanselman}, Chen and Hanselman showed that the $UV=0$ quotient of the full knot Floer complex of satellite knots with $(1,1)$-patterns can be computed using the immersed curve pairing theorem. They then recovered, in a more direct way, Levine's computation of $\tau$ and $\epsilon$ of $0$-twisted satellites with Mazur pattern \cite[Theorem 6.9]{chenhanselman}. 

One consequence of Levine's computation of $\epsilon$ of satellites with Mazur pattern is that the Mazur pattern does not act surjectively on the smooth or $\Q$-homology concordance group.  Levine then used this to construct a knot in the boundary of a contractible $4$-manifold that does not bound a PL disk there or in any other contractible $4$-manifold with the same boundary, answering a question of Kirby and Akbulut \cite[Theorem 1.2]{Levinemazur}. 

In this work, we extend these computations to determine $\tau$ and $\epsilon$ of $n$-twisted satellites with patterns $Q^{i,j}_0$ and non-trivial companions $K$: $Q^{i,j}_n(K)$. As a special case of our work, we show that $\tau$ of an $n$-twisted satellite knot with Mazur pattern and companion $K$ depends only on the value of $n$ relative to $2\tau(K)$, which echos the computations of $\tau$ of $n$-twisted Whitehead doubles \cite{Heddenwhitehead}. Interestingly this is not the case for $\tau$ of satellites with patterns $Q^{i,j}_n$ with winding number $j>1$, where we show that the value of $\tau$ depends linearly on $n$ and quadratically on $j$. Further, we show that for any companion knot $K$, $\epsilon(Q^{i,j}_n(K)) \neq -1$. This shows that for all $i \geq 0$, $j > 0$ and $n \in \Z$ the patterns $Q^{i,j}_n$ do not act surjectively on the smooth or $\Q$-homology concordance group. Using this, we construct a bi-infinite family of patterns $P^i_n$ so that $P^i_n(K)$ is not slice in any $\Q$-homology $4$-ball for any knot $K \subset S^3$. See \cite{2024generalized} for another infinite family of patterns with this property.

\begin{figure}[!tbp]
\begin{center}
  \begin{tikzpicture}
  \draw[very thin, blue] (1.1,.1) to (3.85,2.15);
  \draw[very thin, blue] (1.1,1.6) to (3.85,2.65);
  \draw[very thin, blue] (.1,1.6) to (3.2,2.65);
  \node at (3.5,2.4) {$i$};
\node[anchor=south west,inner sep=0] at (0,0)   {\includegraphics[scale=.2]{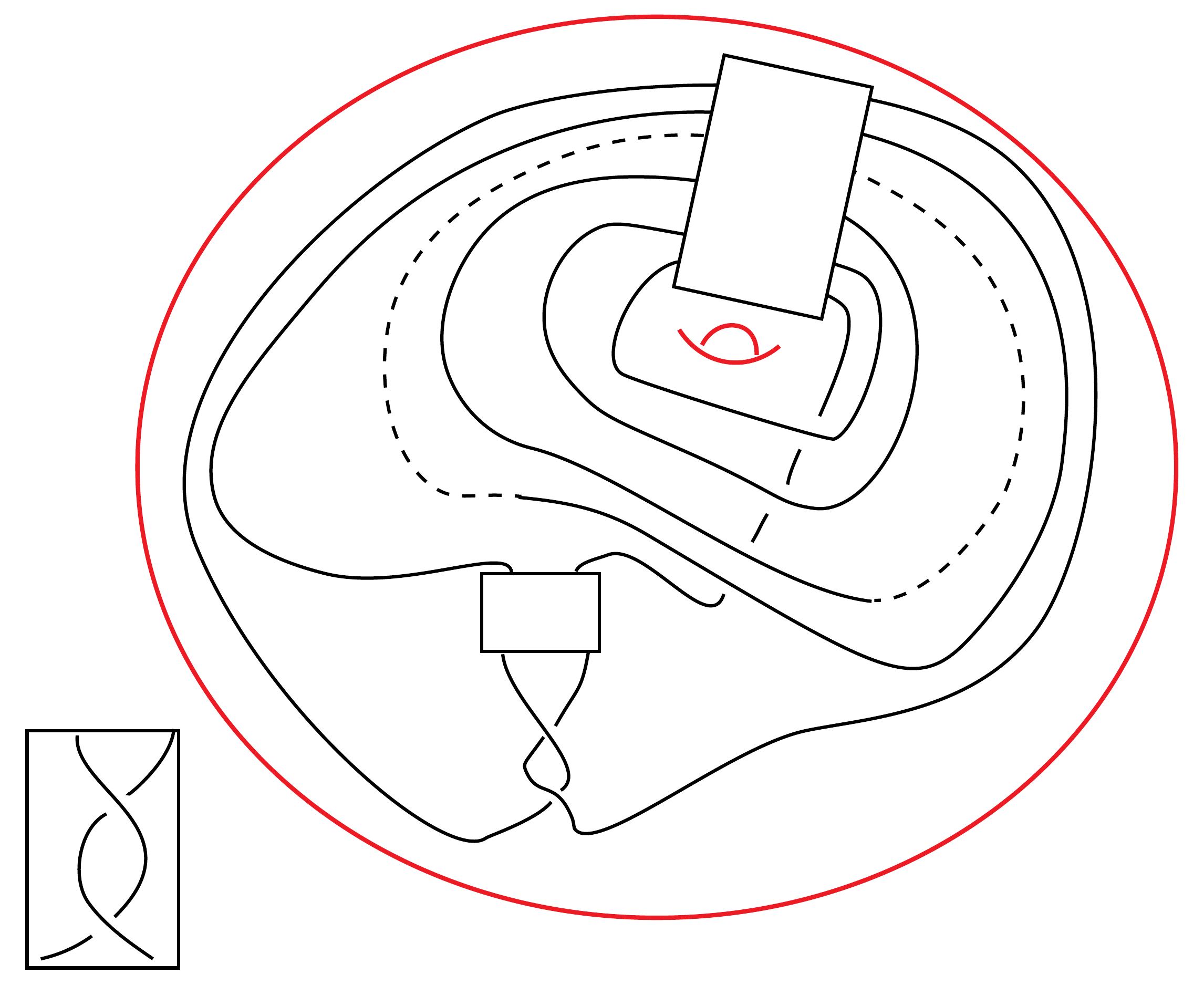}};
\node at (5,5) {$n$};
\end{tikzpicture}
    \caption{The pattern $Q^{i,j}_n$. In the box labelled $i$, there are $i$ full twists on two strands as shown in the box on the bottom left. In the box labelled $n$ insert $n$ full twists on $j+2$ strands}\label{Qij}
    \end{center}
\end{figure}

In another direction, we extend recent computations of Petkova and Wong in \cite{petkovamazur}, where they showed that the genus and fiberedness of the $n$-twisted Mazur pattern in the solid torus can be determined from the bordered type A structure $\CFAhat(S^1\times D^2,Q^{0,1})$, using the bordered pairing theorem and classical results about the genus and fiberedness of satellites knots \cites{schubert, HMS}. We expand on these computations and give closed formulas for the genus of $n$-twisted satellite knots with patterns $Q^{i,j}$ and arbitrary companions, and we determine for which $i,j$ and $n$ the pattern knots $Q^{i,j}_n$ are fibered in the solid torus. We also show that for any non-trivial companion $K$ the satellite knot $Q^{i,j}_n(K)$ is not Floer thin. 


\subsection{Statement of Results}

Recall that for the $n$-twisted satellite knot $P_n(K)$ with non-trivial companion knot $K$, we have \cite{schubert} \begin{equation}\label{schuberteq}g(P_n(K))=|w(P)|g(K)+g(P_n),\end{equation} where $w(P_n)=(P_n\cap (\{pt\} \times D^2))$ is the winding number of the pattern and $g(P_n)$ is the genus of a relative Seifert surface for $P_n$. A consequence of this formula is that to determine $g(P_n(K))$ for an arbitrary non-trivial companion knot $K$, it is enough to determine $g(P_n(T_{2,3}))$. We use this observation together with the fact that knot Floer homology detects the genus of knots in $S^3$ to prove the following:

\begin{theorem}\label{genusnontrivial} For $K$ be a non-trivial knot in $S^3$, $j \in \Z_{>0},$ $i \in \Z_{\geq 0}$ and $n \in \Z,$

$$g(Q^{i,j}_n(K))=\begin{cases} jg(K)+\dfrac{j(j+1)}{2}n+1 & n \geq 0\\
jg(K)+\dfrac{j(j+1)}{2}|n|+(1-j) & n<0\,. \end{cases}$$
\end{theorem}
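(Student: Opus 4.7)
The plan is to apply Schubert's formula~\eqref{schuberteq} to reduce the theorem to the computation of the genus of a single satellite: for any nontrivial $K$,
\[
g(Q^{i,j}_n(K)) - jg(K) = g(Q^{i,j}_n),
\]
so it suffices to establish the formula for one fixed nontrivial companion $K_0$. I would take $K_0 = T_{2,3}$, since $g(T_{2,3})=1$ is well-known and the bordered type~D structure of its complement with any framing is standard from \cite{LOT}.

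Next, I would apply the bordered pairing theorem of \cite{LOT} to identify
\[
\HFKhat(Q^{i,j}_n(T_{2,3}))\cong H_*\bigl(\CFAhat(S^1\times D^2, Q^{i,j}_0)\boxtimes\CFDhat(S^3\setminus\nu(T_{2,3}); n)\bigr),
\]
where the $n$ on the right records the framing on the torus knot exterior. Since $\HFKhat$ detects the Seifert genus of knots in $S^3$ by Ozsv\'ath--Szab\'o, the theorem reduces to showing that the highest Alexander grading carrying nonzero homology on the right equals $j g(T_{2,3}) + g(Q^{i,j}_n) = j + g(Q^{i,j}_n)$, with $g(Q^{i,j}_n)$ matching the predicted constants $\tfrac{j(j+1)}{2}n+1$ or $\tfrac{j(j+1)}{2}|n|+(1-j)$. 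The type~A structure $\CFAhat(S^1\times D^2, Q^{i,j}_0)$ is computed elsewhere in this paper and is used throughout for the $\tau$ and $\epsilon$ results, so the lower bound reduces to tracking Alexander gradings through the box tensor product and isolating a surviving extremal generator.

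The core technical step is the Alexander-grading bookkeeping after pairing. The dichotomy between $n\geq 0$ and $n<0$ should emerge naturally from the structure of $\CFDhat(S^3\setminus\nu(T_{2,3}); n)$: positive and negative framings produce different stable chains through the idempotents, and when paired with $\CFAhat(S^1\times D^2, Q^{i,j}_0)$ the extremal surviving generator lies in a different region of the type~A structure depending on the sign of $n$. This should account exactly for the $+1$ versus $(1-j)$ discrepancy between the two cases.

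The main obstacle will be the bordered computation itself: producing a clean, uniform model of $\CFAhat(S^1\times D^2, Q^{i,j}_0)$ for all $i$ and $j$, tracking Alexander grading shifts through the $\boxtimes$ product carefully enough to locate the extremal generator in each sign regime, and verifying that the parameter $i$ drops out of the final answer as the theorem requires. Since the paper's later sections set up precisely this bordered machinery for the $\tau$ and $\epsilon$ computations, the genus calculation should be extractable as an Alexander-grading corollary of the same pairing.
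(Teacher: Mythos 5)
Your high-level strategy matches the paper's: reduce to a single nontrivial companion via Schubert's formula, take $K_0 = T_{2,3}$, and use genus detection by $\widehat{\HFK}$ together with a pairing theorem. But there are two substantive mismatches.

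First, the paper does not compute $\CFAhat(S^1\times D^2, Q^{i,j}_0)$ and then form the $\boxtimes$ product over the torus algebra. Instead, it uses the $(1,1)$-pattern formalism of Chen and Chen--Hanselman: the pattern $Q^{i,j}_0$ is encoded by a $\beta$ curve $\beta(i,j)$ on a genus-one doubly-pointed bordered Heegaard diagram, and $\HFKhat(Q^{i,j}_n(K))$ is computed as the intersection Floer homology $\CFKhat(\talpha(K,n),\tbeta(i,j))$ of the immersed multicurve for the framed knot complement paired against $\tbeta(i,j)$ in the punctured torus (Theorem~\ref{Pairing}). Conceptually this is equivalent to a box tensor product, but your sentence ``the type A structure $\CFAhat(S^1\times D^2, Q^{i,j}_0)$ is computed elsewhere in this paper'' is not accurate; the explicit $A_\infty$-module is never written down, and nothing else in the paper would hand you that.

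Second, and more importantly, you have no mechanism for why the parameter $i$ drops out of the genus formula; you just note it ``should'' and defer verification to the end. The paper handles this up front and cleanly with Lemma~\ref{twistupnochange}: ``twisting up'' the $\beta$ curve (passing from $\beta(i-1,j)$ to $\beta(i,j)$) changes neither the set of Alexander gradings in which $\HFKhat(Q^{i,j}_n(K))$ is supported nor, consequently, the top grading. This reduces the whole genus computation to $i=0$ before any serious bookkeeping begins. Without such a lemma, the bookkeeping you propose would have to be done for every $i$, and you would need to prove the $i$-independence as a separate final step anyway; identifying this reduction is a genuine missing piece. Once $i=0$, the remaining bookkeeping in the paper is a concrete case analysis (on the parities of $j$ and $n$) locating the extremal intersection point in the lifted (collapsed) pairing diagram and counting rows; your proposal gestures at this but leaves it entirely unspecified.
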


Equation \ref{schuberteq}, and so the proof of Theorem \ref{genusnontrivial}, requires the companion knot to be non-trivial. However, a similar computation gives $g(Q^{i,j}_n(U))$:

\begin{theorem}\label{genusunknotsatellite}
For $j \in \Z_{> 0}$, $i \in \Z_{\geq 0}$ and $n \in \Z,$ $$g(Q^{i,j}_n(U))=\begin{cases} \dfrac{j(j+1)}{2}n+1 & n >0 \\ 0 & n=0 \\ \dfrac{j(j+1)}{2}|n|+1-j & n<0\,. \end{cases}$$
\end{theorem}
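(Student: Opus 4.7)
The plan is to parallel the proof of Theorem~\ref{genusnontrivial}, establishing matching upper and lower bounds on $g(Q^{i,j}_n(U))$. Since Schubert's formula~(\ref{schuberteq}) is unavailable for the trivial companion, the argument splits into an explicit Seifert surface construction for the upper bound and a Floer-theoretic lower bound, with the case $n=0$ handled separately.

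For the upper bound when $n \neq 0$, I use Theorem~\ref{genusnontrivial} together with Schubert's formula applied, say, to $K = T_{2,3}$ (where $g(K)=1$) to determine the minimal relative Seifert genus of $Q^{i,j}_n$ in the solid torus:
\[
g(Q^{i,j}_n) = \begin{cases} \tfrac{j(j+1)}{2}\,n + 1, & n > 0, \\ \tfrac{j(j+1)}{2}\,|n| + 1 - j, & n < 0. \end{cases}
\]
Now embed $V = S^1 \times D^2$ standardly in $S^3$ and let $V' = S^3 \setminus \operatorname{int}(V)$ be the complementary solid torus. The $j$ longitude components on the boundary of a minimal relative Seifert surface $F \subset V$ are meridians of $V'$, so they bound $j$ disjoint meridional disks of $V'$. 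Capping $F$ off with these disks produces a Seifert surface for $Q^{i,j}_n(U) \subset S^3$ of the same genus, matching the claimed formula. For $n = 0$, I verify by inspection of Figure~\ref{Qij} that $Q^{i,j}_0$ bounds an embedded disk in the standardly embedded solid torus, so $Q^{i,j}_0(U) = U$ has genus $0$.

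For the lower bound when $n \neq 0$, I use the fact that knot Floer homology detects the Seifert genus of knots in $S^3$: it suffices to show that $\HFKhat(Q^{i,j}_n(U), s) \neq 0$ for $s$ equal to the claimed genus. The satellite $Q^{i,j}_n(U)$ is a $(1,1)$-knot, and its knot Floer complex is computed via the bordered pairing theorem by
\[
\HFKhat(Q^{i,j}_n(U)) \cong H_*\bigl(\CFAhat(S^1 \times D^2, Q^{i,j}_n) \boxtimes \CFDhat(S^3 \setminus N(U))\bigr),
\]
where $\CFDhat(S^3 \setminus N(U))$ is the standard (and particularly simple) type $D$ structure of the unknot exterior. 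This pairing is a degenerate special case of the one carried out in the proof of Theorem~\ref{genusnontrivial}, so the top non-vanishing Alexander grading can be read off directly from the bordered invariant $\CFAhat(S^1 \times D^2, Q^{i,j}_n)$ of the pattern.

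The main obstacle I foresee is performing the pairing computation efficiently enough to extract only the top Alexander grading without redoing the entire knot Floer complex computation. A likely shortcut is to invoke the $\tau$-invariant computations carried out elsewhere in the paper: showing that $|\tau(Q^{i,j}_n(U))|$ equals the claimed genus would immediately give the lower bound via the inequality $|\tau(K)| \leq g(K)$. A secondary, smaller concern is carefully verifying that $Q^{i,j}_0(U)$ is the unknot directly from Figure~\ref{Qij}, which should be routine once the picture is unpacked but requires some care for general $i$ and $j$.
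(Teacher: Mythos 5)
Your core plan---read the top Alexander grading off the pairing diagram for $\CFKhat(\talpha(U,n),\tbeta(i,j))$ and invoke genus detection---is exactly what the paper does (it points to the pairing diagram in Case~0 of the proof of Theorem~\ref{tau}). The Seifert-surface upper bound via capping off the $j$ longitude components of a minimal relative Seifert surface with meridional disks of the complementary solid torus is correct but redundant: since $\HFKhat$ detects genus, the top nonvanishing Alexander grading determines $g(Q^{i,j}_n(U))$ exactly, so the pairing computation already supplies both bounds. The $n=0$ case is immediate from the fact that $\beta(i,j)$ is by construction a $(1,1)$ \emph{unknot} pattern in Chen's sense, so $Q^{i,j}_0(U)\sim U$ without unpacking Figure~\ref{Qij}.

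However, the ``likely shortcut'' you propose does not work. From Case~0 of Theorem~\ref{tau}, for $K=U$ one has
\[
\tau(Q^{i,j}_n(U)) = \begin{cases}\tfrac{j(j-1)}{2}n & n\geq 0,\\[4pt] \tfrac{j(j-1)}{2}n + j & n<0,\end{cases}
\]
whereas the claimed genus is $\tfrac{j(j+1)}{2}|n|+1$ for $n>0$ and $\tfrac{j(j+1)}{2}|n|+1-j$ for $n<0$. For $n>0$ the difference $g - \tau = jn+1$ is strictly positive, and similarly for $n<0$; concretely, for $j=1$, $n=1$ the genus is $2$ while $\tau=0$. So $|\tau(Q^{i,j}_n(U))|$ is strictly smaller than the genus whenever $n\neq 0$, and the bound $|\tau|\leq g$ cannot supply the needed lower bound. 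You must actually carry out the pairing computation (which is a degenerate and easier instance of the argument in Theorem~\ref{genusnontrivial}, since the essential component of $\talpha(U,n)$ consists only of the unstable chain); once you do that, the cap-off argument becomes unnecessary.
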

Note that, when $j=1$ and $i=0$, Theorems \ref{genusnontrivial} and \ref{genusunknotsatellite} recover \cite[Theorem 1.0.5]{petkovamazur}

Recall from \cite{HMS} that a satellite knot $P_n(K)$ is fibered if and only if the companion knot $K$ is fibered in $S^3$ and the pattern knot $P_n$ is fibered in $S^1 \times D^2$. This implies that to show that a satellite knot $P_n(K)$ is fibered, it is enough to show that the satellite knot $P_n(T_{2,3})$ is fibered. Since a knot $K \subset S^3$ with $g(K)=g$ is fibered in $S^3$ if and only if $\rank(\HFKhat(S^3,K,g))=1$ \cites{Nifibered, Juhaszfibered}, we see that to determine if a pattern $P_n$ is fibered it is enough determine the rank of the top Alexander graded piece of the knot Floer homology of $P_n(T_{2,3})$. For $P=Q^{i,j}$, in Lemma \ref{rankintop} we compute the rank of the top Alexander graded piece of the knot Floer homology of $Q^{i,j}_n(T_{2,3})$ and show
\begin{theorem}\label{fiberedness}
    Let $K$ be a non-trivial fibered knot in $S^3$. Then the satellite knot $Q^{i,j}_n(K)$ is fibered if and only if either $j \geq 2,$ $i=0$, and $n \neq 0$, or $j=1$, $i=0$ and $n \neq -1,0$. 
\end{theorem}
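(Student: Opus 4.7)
The plan is to combine the satellite-fiberedness criterion of Hirasawa–Murasugi–Silver \cite{HMS} with the fibered-detection theorems of Ni and Juhász \cites{Nifibered, Juhaszfibered}, using the trefoil as a convenient "test companion." By HMS, $Q^{i,j}_n(K)$ is fibered if and only if $K$ is fibered in $S^3$ and $Q^{i,j}_n$ is fibered as a pattern in $S^1 \times D^2$. Since by hypothesis $K$ is fibered, only the pattern fiberedness must be checked. The trick is that $T_{2,3}$ is fibered in $S^3$, so, again by HMS, $Q^{i,j}_n$ is fibered in the solid torus if and only if $Q^{i,j}_n(T_{2,3})$ is fibered in $S^3$.

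The advantage of passing to $Q^{i,j}_n(T_{2,3})$ is that knot Floer homology detects fiberedness in $S^3$: by Ni–Juhász, $Q^{i,j}_n(T_{2,3})$ is fibered iff
$$\rank \HFKhat\!\bigl(S^3,Q^{i,j}_n(T_{2,3}),g(Q^{i,j}_n(T_{2,3}))\bigr)=1.$$
The top Alexander grading is already known from Theorem \ref{genusnontrivial} applied with $g(T_{2,3})=1$, so the entire problem reduces to computing $\rank \HFKhat$ of $Q^{i,j}_n(T_{2,3})$ in that specific Alexander grading. This is exactly the content of Lemma \ref{rankintop}. Once that rank is in hand, reading off the cases where it equals $1$ should directly produce the conditions in the theorem: for $i=0$ and $j\ge 2$ the rank is $1$ precisely when $n\neq 0$, and for $i=0$, $j=1$ it is $1$ precisely when $n\notin\{-1,0\}$; any clasp twisting ($i>0$) introduces additional generators in the top Alexander grading that obstruct rank $1$.

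So the proof of Theorem \ref{fiberedness} itself is essentially a short deduction: apply HMS to reduce to pattern fiberedness, apply HMS again with companion $T_{2,3}$ to translate this into a Floer-theoretic question in $S^3$, apply Ni–Juhász, and then invoke Lemma \ref{rankintop} and read off the cases. The real work is hidden in Lemma \ref{rankintop}, which I expect to be proved by the bordered Floer methods used throughout the paper: take the type A structure $\CFAhat(S^1\times D^2, Q^{i,j})$, box-tensor it with $\CFDhat$ of the trefoil complement and the appropriate twisting bimodule implementing the $n$ meridional twists, and then carefully track the Alexander gradings of the resulting generators. The main obstacle is precisely this Alexander-grading bookkeeping in the box tensor product, simultaneously accounting for the $j$-strand twist region and the $i$-twist clasp; given that input, the theorem itself follows by the four-step reduction above.
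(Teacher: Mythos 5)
Your proof is correct and follows exactly the paper's argument: two applications of the Hirasawa--Murasugi--Silver criterion reduce the question to whether $Q^{i,j}_n(T_{2,3})$ is fibered in $S^3$, Ni--Juh\'asz converts this to the rank-one condition on top-graded $\HFKhat$, and Lemma \ref{rankintop} supplies the rank, from which the three cases are read off directly. The only minor deviation is incidental: the paper proves Lemma \ref{rankintop} by inspecting the immersed-curve pairing diagrams (tracking Alexander gradings of intersection points between $\talpha(T_{2,3},n)$ and the collapsed $\beta$ curve) rather than by a literal box tensor with a twisting bimodule, but this does not affect the deduction of the theorem.
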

Note that the case $j=1$ and $i=0$ of Theorem \ref{fiberedness} recovers \cite[Theorem 1.0.6]{petkovamazur}. 

Recall that a knot is called Floer thin if all the generators of the knot Floer homology are supported in the same $\delta$ grading, where $\delta(x)=M(x)-A(x)$. We show
\begin{theorem}\label{theoremthinnontrivial}
For any non-trivial companion knot $K$, the satellite knots $Q^{i,j}_n(K)$ are not Floer thin. 
\end{theorem}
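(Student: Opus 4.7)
The plan is to use the bordered pairing description of $\widehat{\mathrm{CFK}}(Q^{i,j}_n(K))$ to exhibit two surviving generators that share an Alexander grading but occupy different Maslov gradings; this is exactly the obstruction to Floer thinness.

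First, I would apply the bordered pairing theorem to write
$$\widehat{\mathrm{CFK}}(Q^{i,j}_n(K)) \simeq \widehat{\mathrm{CFA}}(Q^{i,j}_n) \boxtimes \widehat{\mathrm{CFD}}(K),$$
with the type $A$ factor being exactly the one computed earlier in the paper during the determination of $\tau$ and $\epsilon$. Non-triviality of $K$ guarantees that $\widehat{\mathrm{CFD}}(K)$ contains an unstable chain of length $2g(K) \geq 2$ joining the extremal $\iota_0$-generators through a known sequence of type-$D$ operations passing through $\iota_1$-generators. In the box product, each such type-$D$ operation forces the type $A$ side to absorb a specific Reeb-chord sequence that shifts both the Maslov and Alexander gradings by amounts dictated by the Heegaard Floer torus algebra.

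Next, I would focus on the top Alexander grading $g(Q^{i,j}_n(K))$ identified in Theorem \ref{genusnontrivial}. Every generator of the box product in this extremal grading is automatically a cycle and represents a nonzero class in $\widehat{\mathrm{HFK}}$, since no higher-Alexander-graded generator can map into it. Using Lemma \ref{rankintop}, which describes the top-Alexander piece of $\widehat{\mathrm{HFK}}(Q^{i,j}_n(T_{2,3}))$, and extending this calculation to arbitrary non-trivial $K$ via the bordered description, I would identify two explicit top-Alexander-grading generators of the tensor product: one coming from pairing the top of $\widehat{\mathrm{CFD}}(K)$ with an extremal $\iota_0$-generator of $\widehat{\mathrm{CFA}}(Q^{i,j}_n)$, and a second obtained by traversing one full zig-zag of the unstable chain and compensating on the pattern side to return to the same Alexander grading. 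A direct computation using the box-product grading formula then shows that the Maslov gradings of these two generators differ by a nonzero integer depending on $g(K)$ and on the pattern parameters, forcing them onto distinct $\delta$-lines.

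The main obstacle will be the Maslov-grading bookkeeping: tracking which algebra actions on $\widehat{\mathrm{CFA}}(Q^{i,j}_n)$ are consumed along the $2g(K)$-step unstable chain, summing their Maslov shifts, and verifying that the Alexander shifts cancel to give two generators of equal Alexander grading but differing Maslov gradings. However, because $g(K) \geq 1$ is the only input needed for the resulting shift to be nonzero, once the grading formula is tabulated for a single zig-zag the general case follows immediately, completing the argument.
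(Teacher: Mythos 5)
Your plan relies on two claims that do not hold, and both are load-bearing.

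First, you assert that non-triviality of $K$ forces $\CFDhat(S^3\setminus\nu(K),n)$ to contain an unstable chain of length $2g(K)\ge 2$. This is false: by the algorithm of \cite[Theorem 11.26]{LOT}, the unstable chain has length $|2\tau(K)-n|$, which has nothing to do with $g(K)$. For $K$ the figure-eight knot and $n=0$ the unstable chain degenerates to a single $\rho_{12}$-arrow, even though $g(K)=1$. So the structure you propose to exploit (a long zig-zag coming from the unstable chain) need not be present, and your argument would have no traction precisely in the cases where it most needs one. What non-triviality actually gives you, and what the paper uses, is the \emph{genus detection} property of $\HFKhat$: in the immersed-curve model, the component of $\alpha(K,n)$ in each vertical column must reach height exactly $g(K)\ge 1$ and then turn back down, producing a rainbow/cap. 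That cap exists regardless of $\tau(K)$ or $n$.

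Second, your strategy is to look only in the top Alexander grading $g(Q^{i,j}_n(K))$ and find two surviving generators there with distinct Maslov gradings. This cannot work in general: Lemma \ref{rankintop} shows that $\HFKhat(S^3,Q^{i,j}_n(T_{2,3}),g)$ has rank one whenever, say, $i=0$, $j\ge 2$, and $n\ne 0$ (and similarly for more general fibered $K$). A rank-one group sits in a single Maslov grading, so there are simply not two generators to compare. The obstruction to thinness the paper extracts is not two generators at equal Alexander grading but a \emph{long vertical differential}: the cap of $\alpha(K,n)$ at height $g(K)$ intersects the $\beta(i,j)$ curve in two points $x,y$ joined by a $z$-crossing bigon of multiplicity $j+1$, so $A(y)=A(x)-(j+1)$ and $M(y)=M(x)-1$, giving $\delta(y)-\delta(x)=j\ge 1$. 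This shows $x$ and $y$ lie on distinct $\delta$-lines even though their Alexander gradings differ. If you want to salvage your bordered box-product version of the argument, the analogous move is to exhibit a long coefficient-map chain arising from a \emph{stable} (horizontal or vertical) arrow in $\CFK^{-}(K)$ of length $g(K)$, rather than from the unstable chain, and to track the resulting differential of length $j+1$ in the tensor product; simply comparing Maslov gradings of two top-Alexander generators does not go through.
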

We also consider the case when the companion knot is trivial. 
\begin{theorem}\label{theoremthintrivial}
For $K=U$ the satellite knots $Q^{i,j}_n(U)$ are Floer thin if and only if $j=1$ and $n=-1$. 
\end{theorem}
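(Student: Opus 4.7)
The plan is to split into the two implications. For the unknot companion, the satellite $Q^{i,j}_n(U)$ is just the pattern $Q^{i,j}_n$ viewed in $S^3$ via the standard embedding of the solid torus, so $\HFKhat(Q^{i,j}_n(U))$ can be obtained as the homology of the box tensor product $\CFAhat(S^1\times D^2 \setminus Q^{i,j}_n) \boxtimes \CFDhat(S^3\setminus \nu(U))$, where the type $D$ structure of the unknot exterior is the standard two-generator model. Since the paper has already computed $\CFAhat$ for the pattern in earlier sections (the same input used for Lemma \ref{rankintop} and Theorems \ref{genusnontrivial}, \ref{fiberedness}), I get an explicit finite chain complex whose generators carry definite Alexander and Maslov (hence $\delta=M-A$) gradings. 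The problem reduces to checking whether all generators lie in a common $\delta$-grading.

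For the \emph{if} direction, when $j=1$ and $n=-1$, Theorem \ref{genusunknotsatellite} gives $g(Q^{i,1}_{-1}(U))=1$. I would identify $Q^{i,1}_{-1}(U)$ directly from Figure \ref{Qij}: inserting a $-1$ full twist at the meridian of a winding-number-one Mazur-like pattern with $i$ clasp twists should yield a twist knot (or more generally, an alternating/quasi-alternating knot) in $S^3$. Thinness then follows from the standard fact that (quasi-)alternating knots are Floer thin \cites{Ozsvath-Szabo-alternating, Manolescu-Ozsvath}. As a backup, the same bordered pairing can be carried out explicitly in this case and one checks by inspection that every generator in the paired complex has the same $\delta$-grading.

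For the \emph{only if} direction, I would exhibit, in each remaining case, two generators of $\HFKhat(Q^{i,j}_n(U))$ with distinct $\delta$-gradings. The genus formula in Theorem \ref{genusunknotsatellite} pinpoints the top Alexander grading, and the generator realizing it (which detects $\tau$) has a Maslov grading that can be read off from the pairing. A second, lower-Alexander-grading generator in the box tensor product — for instance one sitting in Alexander grading $0$ or the generator realized along the $\rho_{23}$-dominated part of the pattern's type $A$ structure — typically has a $\delta$-grading that differs. For $j\geq 2$, the $j(j+1)/2$ growth of the genus produces long ``staircase'' segments in the pairing that cannot collapse into a single $\delta$-line; for $j=1$ with $n\geq 0$ or $n\leq -2$, the two relevant generators can be pointed out explicitly and their Maslov gradings compared by hand.

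The main obstacle is the Maslov-grading bookkeeping: while Alexander gradings are straightforward from the winding-number contribution and the marked domain counts, Maslov gradings require careful tracking of the grading shifts under $\boxtimes$, especially for the second, non-extremal generator used to break thinness. Because the argument must be uniform across the infinite family $(i,j,n)$, the key step is to package the comparison as a closed formula for $\delta$ on two chosen generators and then verify that this formula vanishes exactly when $j=1$ and $n=-1$.
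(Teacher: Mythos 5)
Your framework matches the paper's: both extract $\CFK_{\F[U,V]/UV}(S^3,Q^{i,j}_n(U))$ from the pairing of the immersed curve for the $n$-framed unknot complement against the $(1,1)$-pattern curve $\beta(i,j)$. The substantive difference is that the paper never computes Maslov gradings at all. It uses the observation, recorded just before the proof of Theorem~\ref{theoremthinnontrivial}, that a length-$k$ vertical differential between distinct generators shifts the $\delta$-grading by $k-1$; hence any vertical arrow of length $>1$ immediately kills thinness. The paper then reads off from the pairing diagrams (Figures~\ref{nonthintrivialnegative2} and \ref{nonthintrivialnegative}) that for $n<-1$ there is always a length-$(j+1)$ vertical differential, and for $n=-1$ a length-$j$ one, so thinness can survive only when $j=1$ and $n=-1$. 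This makes the ``main obstacle'' you identify --- the Maslov-grading bookkeeping --- unnecessary, and it gives a uniform argument across $(i,j,n)$ without any case-by-case comparison of $\delta$ on chosen generators. If you pursue your route, you should import this shortcut; as written your ``only if'' direction is a sketch of intent (``typically has a $\delta$-grading that differs,'' ``can be pointed out explicitly'') rather than a proof.

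Two smaller issues. First, your parenthetical claim that the top-Alexander-grading generator ``detects $\tau$'' is incorrect: for $\epsilon(U)=0$ and $j\geq 2$, Theorem~\ref{tau} and Theorem~\ref{genusunknotsatellite} give $\tau(Q^{i,j}_n(U))\neq g(Q^{i,j}_n(U))$, so the generator realizing the genus is not the one whose class survives to $\HFhat(S^3)$. Second, for the ``if'' direction you assert that $Q^{i,1}_{-1}(U)$ ``should yield a twist knot (or more generally, an alternating/quasi-alternating knot).'' That may well be true, but it is not established by anything in the paper or in your sketch, and for $i>0$ it is not at all obvious from Figure~\ref{Qij}; you would need to actually identify the knot or exhibit a quasi-alternating diagram. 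Your fallback --- direct inspection of the pairing diagram, which is what the paper does --- is the safer route and the one that actually closes the argument.
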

Note that Theorems \ref{theoremthinnontrivial} and \ref{theoremthintrivial}, in the case $i=0$ and $j=1$, recover \cite[Theorem 1.01]{petkovamazur}.\\

In \cite{OStau} and \cite{Hom} two smooth concordance invariants of knots derived from the $UV=0$ quotient of the full knot Floer complex $\CFK^{\infty}$ are introduced, called $\tau(K)$ and $\epsilon(K)$. These invariants have proved fruitful in the study of the knot concordance group \cites{Heddenwhitehead, Hom, Levinemazur}. We give an explicit computation of $\tau$ and $\epsilon$ of satellite knots with arbitrary companion knots $K$ and patterns $Q^{i,j}_n$. 

\begin{theorem}\label{tau}
If $K$ is a knot in $S^3$ with $\epsilon(K)=-1$, then for all $i\geq 0$, $j \geq 1$ and $n \in \Z$ $$\tau(Q^{i,j}_n(K))=j(\tau(K)+1)+\dfrac{j(j-1)}{2}n.$$\\

If $K$ is a knot in $S^3$ with $\epsilon(K)=1$, then for all $i\geq 0$, $j \geq 1$ and $n \in \Z,$ $$\tau(Q^{i,j}_n(K))=\begin{cases} j\tau(K)+\dfrac{j(j-1)}{2}n+1 & n<2\tau(K) \\ j\tau(K)+\dfrac{j(j-1)}{2}n & n \geq 2\tau(K)\,. \end{cases}$$\\

If $K$ is a knot in $S^3$ with $\epsilon(K)=0$, then for all $i\geq 0$, $j \geq 1$ and $n \in \Z,$ $$\tau(Q^{i,j}_n(K))= \begin{cases}\dfrac{j(j-1)}{2}n & n\geq 0\\ \dfrac{j(j-1)}{2}n+j & n<0\,. \end{cases}$$
\end{theorem}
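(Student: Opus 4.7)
The plan is to apply the bordered pairing theorem to decouple the computation into properties of the pattern complement and properties of the companion. Since $Q^{i,j}_n(K)$ is regarded as the $n$-twisted satellite of $K$ with pattern $Q^{i,j}_0$, write $X_{Q^{i,j}}$ for the complement of $Q^{i,j}_0$ in the solid torus. First I would compute the bordered type A structure $\CFAhat(X_{Q^{i,j}})$ from a genus two Heegaard diagram for the $(1,1)$-pattern, enumerating generators, differentials, and the relative Alexander grading explicitly. This is the main computational input; the analogue for the Mazur pattern appears in Levine's paper, and I expect the same method to work here, with the quadratic $j(j-1)/2$ term in the theorem emerging from a triangular Alexander grading pattern across the $j$ parallel strands introduced by the higher winding number.

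Second, I would invoke the standard classification of the simplified $\CFDhat(S^3 \setminus \nu(K))$ with $n$-framing for an arbitrary companion $K$: it decomposes into an unstable chain whose length and orientation are governed by $n - 2\tau(K)$, together with stable chains whose shape near the $\tau$-vertex is dictated by $\epsilon(K) \in \{-1, 0, 1\}$. The three cases of the theorem correspond exactly to the three possible values of $\epsilon(K)$, and the dichotomy $n < 2\tau(K)$ versus $n \geq 2\tau(K)$ in the $\epsilon(K)=1$ case corresponds to the sign reversal of the unstable chain at $n = 2\tau(K)$; similarly, the $n \geq 0$ versus $n < 0$ split for $\epsilon(K) = 0$ reflects that the stable chains degenerate and only the unstable chain contributes.

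Third, pair these to obtain a model for $\CFKhat(S^3, Q^{i,j}_n(K))$ and extract $\tau$ as the Alexander grading of the generator whose image in $\HFhat(S^3) = \F$ is non-zero. Most of the generators in the tensor product will appear in cancelling pairs contributing trivially to $\HFhat$, and the survivor will live on a small portion of the pattern bimodule that can be pinpointed by hand once the decomposition of $\CFDhat(S^3 \setminus \nu(K))$ is fixed. The precise formulas then fall out of reading off the Alexander grading of this survivor in each case.

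The main obstacle will be the bookkeeping of the tensor product: $\CFAhat(X_{Q^{i,j}})$ has many generators (growing with $i$ and $j$), and each case of the theorem requires identifying the distinguished survivor among many cancellations and computing its Alexander grading. The $\epsilon(K)=1$ transition at $n=2\tau(K)$ is likely to be the trickiest single point, since the surviving generator jumps from one stable chain to the other as $n$ passes the threshold, and the formula correspondingly jumps by $1$; analogous jumps occur at $n=0$ in the $\epsilon(K)=0$ case. Verifying these jumps, together with confirming that the $\epsilon(K)=-1$ case produces no such transition, will be the delicate part of the argument, but once the bordered bimodule of the pattern is in hand the rest reduces to a finite case analysis.
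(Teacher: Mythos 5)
Your proposal follows Levine's original route (compute a bordered invariant of the pattern complement explicitly, then box-tensor with $\CFDhat(S^3 \setminus \nu(K), n)$), whereas the paper takes the Chen--Hanselman immersed-curve shortcut that is specific to $(1,1)$-patterns. The paper never computes $\CFAhat$ or a $\widehat{\mathrm{CFDA}}$ bimodule from a genus-two bordered diagram; instead it represents the $n$-framed companion complement by an immersed multicurve $\alpha(K,n)$ in the punctured torus (whose ``unstable'' or essential component has slope $2\tau(K)-n$ and turns up, down, or goes straight according to $\epsilon(K)$, Lemmas~\ref{curveshape1}--\ref{curveshape2}), and represents the $(1,1)$-pattern $Q^{i,j}$ by a single $\beta$ curve $\beta(i,j)$ in the same torus. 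The $UV=0$ knot Floer complex of the satellite is then Lagrangian intersection Floer homology of these two curves (Theorem~\ref{Pairing}), and $\tau$ is read off as the Alexander grading of the intersection point carrying the $\F[V]$-free summand, computed by counting rows in the lifted pairing diagram. What the immersed-curve route buys is that both the identification of the surviving cycle and the Alexander-grading count are geometric and essentially mechanical; the large case analysis in $(\tau(K),\epsilon(K),n,j\bmod 2)$ stays tractable because it is just a picture in each case, rather than a box-tensor product with many cancellations. Your approach is viable in principle (Levine carried it out for $Q^{0,1}_0$), but reconstructing $\CFAhat$ from a genus-two diagram for all $i,j$ and then hand-cancelling the tensor product would be far heavier, and the paper avoids it entirely.

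The one genuine gap in your sketch is the dependence on $i$: the formula in the theorem is independent of $i$, but you do not explain why. The paper deals with this via a separate reduction: ``twisting up'' $\beta(i-1,j)$ to $\beta(i,j)$ adds extra intersection points but does not change either the set of attained Alexander gradings (Lemma~\ref{twistupnochange}) or the cycle that survives to generate $\HFhat(S^3)$, hence $\tau(Q^{i,j}_n(K)) = \tau(Q^{0,j}_n(K))$ (Lemma~\ref{tauindependenti}). Without an analogous argument, your plan leaves you with an unbounded family of bordered computations in $i$, one for each value, rather than a single finite case analysis once $j$, $n$, $\tau(K)$, $\epsilon(K)$ are fixed. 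You should either prove this $i$-independence at the level of the pattern bimodule, or reduce to $i=0$ at the outset. One further small inaccuracy: for $\epsilon(K)=0$ the split at $n\ge 0$ versus $n<0$ does not come from stable chains ``degenerating'' while the unstable chain contributes; rather, $\epsilon(K)=0$ forces $\tau(K)=0$ and the relevant essential component of $\alpha(K,n)$ is identical to that of the $n$-framed unknot, so the dichotomy is simply the sign of $n$ controlling whether the unstable chain runs upward or downward in the pairing diagram.
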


\begin{theorem}\label{epsilon}
For any knot $K$ and for any $i\geq 0$, $j \geq 1$ and $n \in \Z$, we have $\epsilon(Q^{i,j}_n(K))\in \{0,1\}$. 
\end{theorem}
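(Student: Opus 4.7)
The plan is to read off $\epsilon(Q^{i,j}_n(K))$ from the explicit description of $\CFK_{UV=0}(Q^{i,j}_n(K))$ that is constructed earlier in the paper by pairing the bordered type $A$ structure $\CFAhat(S^1\times D^2, Q^{i,j}_n)$ with $\CFDhat(S^3 \setminus K)$ (equivalently, via the immersed curve pairing of \cite{chenhanselman}). The same computation was already used to prove Theorem \ref{tau}, so the generators and arrows of the paired complex are in hand; what remains is to read $\epsilon$ from this data.

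Recall the standard criterion: $\epsilon(L)=-1$ if and only if, in some horizontally simplified basis for $\CFK_{UV=0}(L)$, the distinguished generator realizing $\tau(L)$ is the target of a nontrivial vertical differential. Equivalently, $\epsilon(L) \ne -1$ as soon as we can exhibit a horizontally simplified basis in which the $\tau$-realizing generator $x_\tau$ is a cycle in the vertical complex. So the goal is to produce such a basis.

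The steps I would carry out are: (1) identify, in the paired complex, the generator $x_\tau$ whose Alexander grading matches the value of $\tau(Q^{i,j}_n(K))$ computed in Theorem \ref{tau}; (2) using the arrow structure inherited from the type $A$ operations of $Q^{i,j}_n$, perform cancellations along horizontal arrows to put $\CFK_{UV=0}(Q^{i,j}_n(K))$ in a horizontally simplified form, keeping track of how $x_\tau$ transforms; (3) verify in this basis that no vertical arrow terminates at $x_\tau$, so $\epsilon \ge 0$; (4) decide between $\epsilon = 0$ and $\epsilon = 1$ by inspecting whether $x_\tau$ is also a cycle (respectively, the target of an incoming horizontal--then--vertical arrow) in the resulting model. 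The argument will naturally split according to $\epsilon(K) \in \{-1,0,1\}$ and, when $\epsilon(K)=1$, the sign of $n-2\tau(K)$, in parallel with the case split in Theorem \ref{tau}.

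The main obstacle is the case $\epsilon(K) = -1$. Here $\CFDhat(S^3 \setminus K)$ contributes vertical ``arrows in the wrong direction'' which, a priori, could produce an incoming vertical differential at $x_\tau$ in the paired complex. The key combinatorial point to verify is that the specific form of $\CFAhat(S^1 \times D^2, Q^{i,j}_n)$ worked out earlier in the paper forces every such vertical arrow to compose with a horizontal arrow coming from the pattern, so that after the cancellation sequence in step (2) the offending vertical differentials at $x_\tau$ are killed. This is the generalization of Levine's observation for the Mazur pattern that produced $\epsilon \in \{0,1\}$ in \cite{Levinemazur}, and checking it amounts to a direct combinatorial inspection of the generators that feed into $x_\tau$ under the $\boxtimes$ product, together with the formulas for $\tau$ already verified in Theorem \ref{tau}.
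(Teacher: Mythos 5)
Your high-level plan — compute the paired complex via the immersed-curve pairing, locate the generator realizing $\tau$, and read $\epsilon$ from the arrows around it, splitting on $\epsilon(K)$ and the sign of $n - 2\tau(K)$ — is the approach the paper takes, so the skeleton is right. However, there is a genuine gap in the criterion you propose to apply, and it would derail the verification.

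You state that $\epsilon(L) = -1$ if and only if, in a suitable basis, the distinguished generator realizing $\tau(L)$ is the \emph{target of a vertical differential}. This is not the correct criterion. In Hom's conventions (as recalled in Section \ref{background} of this paper), the generator $\xi_0$ realizing $\tau$ is the \emph{vertically distinguished} element of a horizontally simplified basis: by definition it has no vertical arrows into or out of it. The sign of $\epsilon$ is read from the \emph{horizontal} arrows at $\xi_0$: $\epsilon = 1$ if a horizontal arrow terminates at $\xi_0$, $\epsilon = -1$ if a horizontal arrow emanates from $\xi_0$, and $\epsilon = 0$ if neither. Consequently your step (3) (``verify no vertical arrow terminates at $x_\tau$'') is vacuously true and does not control $\epsilon$; the conclusion ``$\epsilon \geq 0$'' does not follow from it. Similarly, the ``main obstacle'' you describe for $\epsilon(K) = -1$ — that $\CFDhat$ might contribute an incoming vertical differential at $x_\tau$ that must be cancelled away — is not the actual issue, since such an arrow is already excluded by the definition of $x_\tau$.

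What the paper does in each of the case-by-case arguments in the proof of Theorem \ref{tau} (Figures \ref{tauposepsilonminus1nneg}, \ref{tauposepsilonnegngreater2tau}, \ref{taunegepsilonnegnpos}, \ref{taunegepsilonminus1nleq2tau}, etc.) is first produce the vertical subcomplex whose sum of generators is the cycle $x_\tau$ generating $\HFhat(S^3)$ after setting $V=1$, and then exhibit explicitly a chain $b$ (or $\sum b_{2k+1}$) with $\partial^h(b) = U\,x_\tau$, so that $x_\tau$ lies in the image of the horizontal differential. This gives $\epsilon(Q^{0,j}_n(K)) = 1$ directly — including in the case $\epsilon(K)=-1$, where no cancellation sequence or ``composing vertical with horizontal arrows'' is needed; the horizontal arrow into $x_\tau$ is simply visible in the pairing diagram in every case with $n\ne 0$ or $K$ nontrivial (and $\epsilon = 0$ in the remaining trivial case). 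One also needs the separate observation (Lemma following \ref{tauindependenti}) that twisting up the $\beta$ curve (increasing $i$) only adds new horizontal arrows into $x_\tau$ in pairs that cancel after a change of basis, so $\epsilon$ is independent of $i$. Your proposal omits this reduction. To repair your argument, replace the vertical/target criterion with the correct horizontal/source-and-target criterion, and note that the verification is then an inspection of the pairing diagram rather than a cancellation argument.
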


The invariant $\epsilon$ is a concordance invariant, and takes values in $\{0,1,-1\}$. If we let $\mathcal{C}_{\Q}$ denote the $\Q$-homology knot concordance group then an immediate Corollary of Theorem \ref{epsilon} is
\begin{corollary}\label{nonsurjective}
    F or all $i\geq 0$, $j \geq 1$ and $n \in \Z$, the satellite operators $Q^{i,j}_n: \mathcal{C}_{\Q} \to \mathcal{C}_{\Q}$ are not surjective. 
\end{corollary}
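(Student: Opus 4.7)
The plan is to deduce the corollary directly from Theorem \ref{epsilon}, using that $\epsilon$ is a $\Q$-concordance invariant of knots. More precisely, I would first recall from Hom's work that $\epsilon$ descends to a well-defined map $\epsilon : \mathcal{C}_{\Q} \to \{-1,0,1\}$, so that in particular if two knots represent the same class in $\mathcal{C}_{\Q}$ they have the same value of $\epsilon$. By Theorem \ref{epsilon}, every knot of the form $Q^{i,j}_n(K)$ satisfies $\epsilon(Q^{i,j}_n(K)) \in \{0,1\}$, so the image of $Q^{i,j}_n : \mathcal{C}_{\Q} \to \mathcal{C}_{\Q}$ is contained in the preimage of $\{0,1\}$ under $\epsilon$.

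To conclude non-surjectivity it then suffices to exhibit a single class in $\mathcal{C}_{\Q}$ with $\epsilon = -1$. I would take the mirror $\overline{T_{2,3}}$ of the right-handed trefoil: since $\epsilon(T_{2,3}) = 1$ and $\epsilon(\overline{K}) = -\epsilon(K)$ for every knot $K$, one has $\epsilon(\overline{T_{2,3}}) = -1$. If there existed $K \in \mathcal{C}_{\Q}$ with $[Q^{i,j}_n(K)] = [\overline{T_{2,3}}]$ in $\mathcal{C}_{\Q}$, then by invariance $\epsilon(Q^{i,j}_n(K)) = -1$, contradicting Theorem \ref{epsilon}. Hence $[\overline{T_{2,3}}]$ is not in the image of $Q^{i,j}_n$.

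There is essentially no obstacle here; the entire content sits in Theorem \ref{epsilon}, and the corollary is a one-step deduction once the behavior of $\epsilon$ under mirroring and under $\Q$-concordance has been invoked. The only point that deserves a sentence of comment is that $\epsilon$ is indeed a $\Q$-concordance invariant (rather than just a smooth concordance invariant), so that the statement really does apply to $\mathcal{C}_{\Q}$ and not merely to the smooth concordance group $\mathcal{C}$; this was established by Hom and can be cited directly. An immediate byproduct worth noting is that the same argument shows non-surjectivity on the smooth concordance group $\mathcal{C}$ as well.
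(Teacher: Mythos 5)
Your proof is correct and follows exactly the approach the paper intends: the paper treats the corollary as immediate from Theorem \ref{epsilon} together with the fact that $\epsilon$ is a $\Q$-concordance invariant taking values in $\{-1,0,1\}$, and you simply supply the missing one line by exhibiting a knot (the left-handed trefoil) with $\epsilon = -1$.
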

As mentioned above, this shows that when we add full twists to the clasp region of the Mazur pattern (by increasing the parameter $i$) and add add meridional twists to the Mazur pattern, we get a bi-infinite family of winding number $1$ patterns that do not act surjectively on the $\Q$-homology concordance group and thus gives infinitely many examples of pattern knots in homology spheres that do not bound PL disks in any contractible $4$-manifold. See the recent work of \cite{2024generalized} for another infinite family of winding number $1$ patterns with the same property. Our construction also gives many patterns of arbitrarily large winding number and various knot types in $S^3$ that also are not surjective satellite operators, and in particular shows that for these patterns, the concordance invariant $\epsilon$ is not sensitive to adding twists in the clasp region and adding full meridional twists to the pattern knot.

\section*{Acknowledgments}
The author is thankful for the guidance of his advisor Robert Lipshitz is very grateful to Subhankar Dey for many helpful discussions and suggestions. The author also thanks Jonathan Hansleman and Wenzhao Chen for helpful correspondence. 

\section{Background}\label{background}

In this section we review some concepts from the immersed curve reformulation of bordered Floer homology and the bordered pairing theorem for $(1,1)$-patterns. We assume the reader is familiar with the various flavors of knot Floer homology and the work of \cites{LOT}. We quickly review the necessary background to state the immersed curve reformulation of the bordered invariants and bordered pairing theorem from \cites{Chen, chenhanselman, HRW}. In Section \ref{CFD} we introduce some notation and prove a structure theorem for the immersed curve associated to an $n$ framed knot complement. Then in Section \ref{(1,1)} we discuss $(1,1)$-patterns and the work of \cite{Chen} with an eye towards understanding the $UV=0$ quotient of the knot Floer complex from the pairing diagram as in \cite{chenhanselman}, and then in Section \ref{beta} we discuss the specific family of $(1,1)$-patterns that gives rise to the patterns knots $Q^{i,j}$.

\subsection{Immersed Curves for $n$-Framed Knot Complements}\label{CFD}

In this section, we recall the algorithm from \cite[Section 11.5, Theorem 11.26]{LOT} for computing $\CFDhat(S^3- \nu(K),n)$ from $\CFK^{-}(K)$. For the definitions of reduced, filtered basis, we refer the reader to the original source (see also \cite{geography}).
We call a filtered reduced basis over $\F[U]$ vertically simplified if for each basis element $x_i$ exactly one of the following conditions is satisfied

\begin{itemize}

\item There is a unique incoming vertical arrow, and no outgoing vertical arrow, or
\item There is a unique outgoing vertical arrow and no incoming vertical arrow, or
\item There are no vertical arrows. 

\end{itemize}

A horizontally simplified basis is defined similarly, replacing vertical by horizontal in the above. Given a knot $K$ and a framing $n$, exists a pair of bases $\teta=\{\teta_1,\dots,\teta_{2k}\}$ and $\txi=\{\txi_1,\dots,\txi_{2k}\}$ for $\CFK^{-}(K)$ that are horizontally and vertically simplified respectively. They are indexed so that for every pair $\teta_{2i-1}$ and $\teta_{2i}$ there is a horizontal arrow of length $l_i\geq 1$ connecting them and similarly, there is a vertical arrow of length $k_i\geq 1$ from $\txi_{2i-1}$ to $\txi_{2i}$. There are corresponding bases $\xi=\{\xi_0,\dots,\xi_{2k}\}$ and $\eta=\{\eta_0,\dots,\eta_{2k}\}$ for $\iota_0\CFDhat(X_k,n)$ so that if $\txi_j=\sum_{i=0}^{2k} a_{ij}\teta_i$ and $\eta_j=\sum_{i=0}^{2k}b_{ij}\txi_i$, then the corresponding change of bases formulas hold with the coefficients restricted to $U=0$. The summand $\iota_1\CFDhat$ has basis $$\bigcup_{i=1}^k\{\kappa_1^i,\dots,\kappa_{k_i}^i\}\cup\bigcup_{i=1}^k \{\lambda_1^i,\dots,\lambda_{l_i}^i\} \cup\{\mu_1,\dots,\mu_{|2\tau(K)-n|}\}.$$

There are non-zero coefficient maps induced from the horizontal and vertical arrows in the complex for $CFK^{-}$ as follows. A length $k_i$ vertical arrow from $\xi_{2i-1}$ to $\xi_{2i}$ gives the sequence of type D operations, where an arrow from $x$ to $y$ labelled with $\rho_I$ means that $\delta(x)=\rho_I \otimes y+\cdots$

$$\xi_{2i-1} \xrightarrow{\rho_1} \kappa_1^i \xleftarrow{\rho_{23}} \kappa_2^{i} \dots\xleftarrow{\rho_{23}}\kappa^i_{k_i}\xleftarrow{\rho_{123}}\xi_{2i}$$

Similarly, for each length $l_i$ horizontal arrow from $\eta_{2i-1}$ to $\eta_{2i}$, we get the sequence of type D operations

$$\eta_{2i-1} \xrightarrow{\rho_3} \lambda_1^i \xrightarrow{\rho_{23}} \lambda^i_{2}\xrightarrow{\rho_{23}} \dots \xrightarrow{\rho_{23}}\lambda_{l_i}^i \xrightarrow{\rho_2} \eta_{2i}$$

Additionally, there are coefficient maps from $\xi_0$ to $\eta_{0}$ depending on the framing and the value of the invariant $\tau(K)$.

\begin{itemize}

\item $\xi_0 \xrightarrow{\rho_{12}} \eta_0 \quad \text{if} \quad n=2\tau(K)$

\item $\xi_0 \xrightarrow{\rho_1} \mu_1 \xleftarrow{\rho_{23}} \dots \xleftarrow{\rho_{23}} \mu_m \xleftarrow{\rho_3} \eta_0  \quad \text{if} \quad n<2\tau(K) \quad m=2\tau(K)-n$

\item $\xi_0 \xrightarrow{\rho_{123}} \mu_1 \xrightarrow{\rho_{23}} \dots \xrightarrow{\rho_{23}} \mu_m \xrightarrow{\rho_2} \eta_0  \quad \text{if} \quad n>2\tau(K), \quad m=n-2\tau(K)$

\end{itemize}

For example, for the knot $K=T_{2,3}$, the right-handed trefoil, $\CFK^{-}(T_{2,3})$ has a simultaneously vertically and horizontally simplified $\F[U]$ basis $\{\txi_0,\txi_1,\txi_2\}$ with differential given by $\partial(\txi_1)=U\txi_0+\txi_2$. Applying the above algorithm, we get the type $D$ structure shown in Figure \ref{trefoil0framed}.

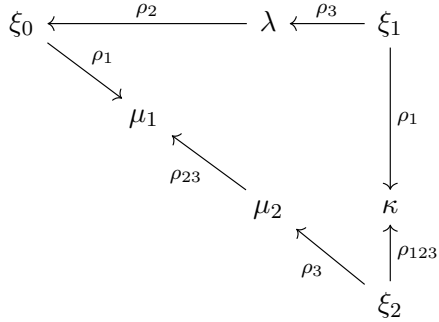
\begin{figure}[!tbp]
\begin{center}
\begin{tikzcd}
\xi_0\arrow{dr}{\rho_1} & & \lambda \arrow{ll}[swap]{\rho_2} & \xi_1 \arrow{l}[swap]{\rho_3}\arrow{dd}{\rho_1}\\
& \mu_1 & &\\
&& \mu_2 \arrow{ul}{\rho_{23}}& \kappa \\
&&& \xi_2 \arrow{u}[swap]{\rho_{123}}\arrow{ul}{\rho_3}
\end{tikzcd}
\caption{Type D structure for $0$-framed right handed trefoil complement}\label{trefoil0framed}
\end{center}
\end{figure}

For any knot $K$ in $S^3$, there is always a vertically distinguished element of a horizontally simplified basis, which is an element in a horizontally simplified basis with no incoming or outgoing vertical arrows. Similarly, there is a horizontally distinguished element of a vertically simplified basis. In \cite[Lemma 3.2]{Hom}, Hom shows that it is always possible to find a horizontally simplified basis for $\CFK^{\infty}(K)$ so that one of the horizontal basis elements $\xi_0$ is the vertically distinguished generator of some vertically simplified basis. Note that the concordance invariant $\epsilon(K)$ can be defined in terms of the generator $\xi_0$: If $\xi_0$ occurs at the end of a horizontal arrow, then $\epsilon(K)=1$, if $\xi_0$ occurs at the beginning of a horizontal arrow, then $\epsilon(K)=-1$. If there is no horizontal arrow to or from $\xi_0$, then $\epsilon(K)=0$.

Given a type $D$ structure over the torus algebra, like $\CFDhat(S^3\setminus \nu(K),n)$, the work in \cite{HRW} shows how we can represent it as an immersed multicurve with local systems in the torus, which we now describe. The first step is to construct a decorated graph from the type D structure. Let $N$ be a type $D$ structure over the torus algebra, and let $N_i=\iota_iN$. This gives a decomposition $N=N_0\oplus N_1$. Given bases $B_i$ of $N_i$, for $i=0,1$, we construct a decorated graph $\Gamma$ as follows. The vertices of $\Gamma$ are in correspondence with the basis elements and are labelled $\tikzcircle{2pt}$ or $\tikzcirc{2pt}$ depending on if the vertex corresponds to a basis element in $B_0$ or $B_1$ respectively. Suppose now that we have two vertices corresponding to basis elements $x$ and $y$ such that $\delta(x)=\rho_I\otimes y +\cdots$, for $I \in \{\emptyset,1,2,3,12,23,123\}$. In this case we put an edge labelled $\rho_I$ from $x$ to $y$. A decorated graph is called reduced if no edge labelled by $\rho_{\emptyset}$ appears. Since our bases for $\CFK^{-}(S^3,K)$ are reduced, this is automatically satisfied in the type D structure as well. The next step is to take a decorated graph and turn it into an immersed train track in the punctured torus. Let $T^2=\R^2/\Z^2$ and let $w=(1-\epsilon,1-\epsilon)$ be a basepoint. Let $\mu$ and $\lambda$ be the images of the $x$ and $y$ axes respectively and embed the vertices of $\Gamma$ into $T^2$ so that the $\tikzcircle{2pt}$ vertices lie on $\lambda$ in the interval $\{0\} \times [\frac{1}{4},\frac{3}{4}]$ and the $\tikzcirc{2pt}$ vertices lie on $\lambda$ in the interval $[\frac{1}{4},\frac{3}{4}]\times \{0\}$. Then we embed the edges into the torus according to the rules shown in \cite[Figure 19]{HRW}. In general this train track is not necessarily an immersed curve, but work in \cite{HRW} shows that for type $D$ structures that arise from 3-manifolds with torus boundary one can always choose a nice basis so that the train track is an immersed curve (possibly with local systems).

Moving back to understanding the ingredients for computing the knot Floer homology of $n$ twisted satellites, note that the pair $(S^3,P_n(K))$ can be obtained by gluing $S^3 -\nu(K)$ with framing $n$ to $(S^1 \times D^2,P)$ or by gluing $S^3-\nu(K)$ with framing $0$ to the pair $(S^1 \times D^2, P_n)$. We want to study the pairing $\CFDhat(S^3-\nu(K),n) \boxtimes \CFAhat(S^1\times D^2,P)$ which computes $\CFKhat(S^3,P_n(K))$ from the perspective of immersed curves. With this goal in mind, we move to understand the immersed curve associated to an $n$-framed knot complement.

\begin{definition}[\cites{HRW, HRW1, cabling}]
    Given a knot $K \subset S^3$, let $\alpha(K,n)$ denote the immersed multi-curve representing the type D structure $\CFDhat(S^3- \nu(K),n)$.
\end{definition}
\begin{figure}[!tbp]
  \centering
  \begin{minipage}[b]{0.3\textwidth}
\begin{center}
\begin{tikzcd}
\xi_0\arrow[dotted]{dddrrr}{}  & \lambda \arrow{l}[swap]{\rho_2} &\\
& & & &\\
&& & \kappa \\
&&& \eta_0 \arrow{u}[swap]{\rho_{123}}
\end{tikzcd}
\caption{Type D structure for complement of knot $K$ with $\tau(K)>0$ and $\epsilon(K)=1$, where we replace the dotted arrow from $\xi_0$ to $\eta_0$ by the appropriate unstable chain.}\label{typeDtauposepsilon1}
\end{center}
  \end{minipage}
  \hspace{2in}
  \begin{minipage}[b]{0.3\textwidth}
\begin{center}
\begin{tikzcd}
\lambda & \xi_0\arrow[dotted]{dddrr}{}\arrow{l}{\rho_3}& &\\
& & & &\\
&& && \\
&&& \eta_0 \arrow{d}[swap]{\rho_{1}}\\
&&&\kappa&
\end{tikzcd}
\caption{Type D structure for complement of knot $K$ with $\tau(K)>0$ and $\epsilon(K)=-1$, where we replace the dotted arrow from $\xi_0$ to $\eta_0$ by the appropriate unstable chain.}\label{typeDtauposepsilonminus1}
\end{center}
  \end{minipage}
\end{figure}
As in \cite[Proposition 2]{cabling} we single out a special component of the immersed multi-curve $\alpha(K,n)$, denoted $\gamma_0$ and called the \emph{essential component} of the immersed curve (See also \cite[pp 43-44]{HRW}). As mentioned there, if we lift the curve to $\R^2$ and consider the vertical axes $\Z\times \R$, then the essential component is the only component of the immersed curve that crosses from $\{i\} \times \R$ to $\{i+1\} \times \R$ and in the case when the framing $n=0$, this potion of the essential component of the immersed curve component has slope $2\tau(K)$ (That is is spans $2\tau(K)$ rows) and it either turns up, down or continues straight after passing through $\{i+1\} \times \R$ if $\epsilon(K)=-1, 1$ or $0$ respectively. We extend these observations to a structure theorem for a portion of $\gamma_0$ of $n$ framed knot complements.

\begin{figure}[!tbp]
  \centering
  \begin{minipage}[b]{0.3\textwidth}
  \begin{tikzpicture}
\node[anchor=south west,inner sep=0] at (0,0)    {\includegraphics[width=1.2\textwidth]{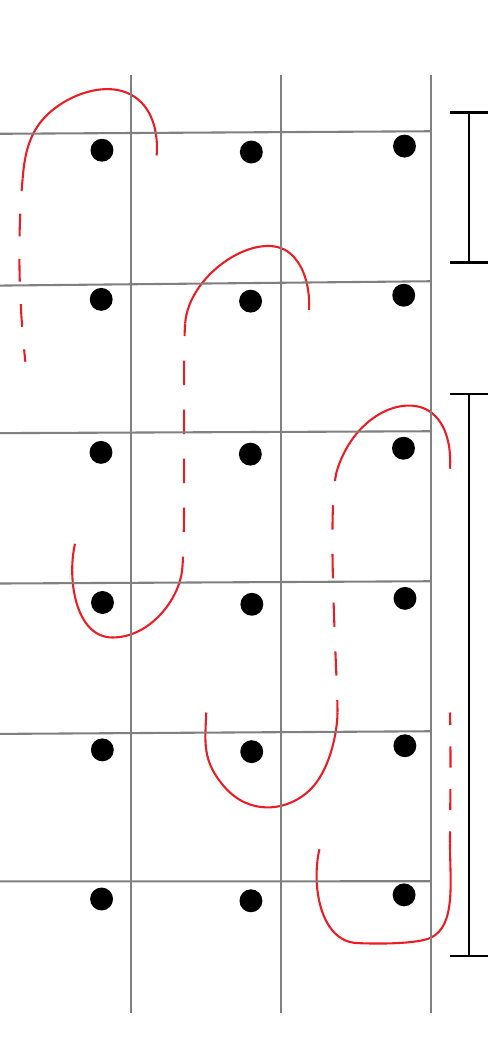}};
\node[font=\tiny] at (1.3,4.1) {$\tikzcirc{2pt}$};
\node[font=\tiny] at (1.3,9.5) {$\tikzcirc{2pt}$};
\node[font=\tiny] at (1.8,4.6) {$\tikzcirc{2pt}$};
\node[font=\tiny] at (1.85,6.15) {$\tikzcirc{2pt}$};
\node[font=\tiny] at (2,7.6) {$\tikzcirc{2pt}$};
\node[font=\tiny] at (2.8,8) {$\tikzcirc{2pt}$};
\node[font=\tiny] at (3.1,7.6) {$\tikzcirc{2pt}$};
\node[font=\tiny] at (.75,4.6) {$\tikzcirc{2pt}$};

\node[font=\tiny] at (1.5,4) {$\eta_0$};
\node[font=\tiny] at (1.5,9.6) {$\xi_0$};
\node[font=\tiny] at (2,4.5) {$\mu_m$};
\node[font=\tiny] at (2,7.9) {$\mu_1$};
\node[font=\tiny] at (3.3,7.75) {$\lambda$};
\node[font=\tiny] at (.6,4.5) {$\kappa$};
\node[font=\tiny] at (2.9,8.2) {$\xi_0$};
\node[rotate=90] at (.1,4) {$2\tau(K)-n$};
\node[rotate=90] at (5,4) {$2\tau(K)$};
\node[rotate=90] at (5,8.5) {$n$};
\node[font=\tiny] at (1.3,0) {$\{i\} \times \R$};
\node[font=\tiny] at (2.8,0) {$\{i+1\} \times \R$};

\node[font=\tiny] at (.8,4.1) {$\rho_{123}$};
\node[font=\tiny] at (1.7,4.2) {$\rho_3$};
\node[font=\tiny] at (2.1,5.5) {$\rho_{23}$};
\node[font=\tiny] at (2.1,6.7) {$\rho_{23}$};
\node[font=\tiny] at (2.3,8) {$\rho_1$};
\node[font=\tiny] at (3.2,8) {$\rho_2$};
\end{tikzpicture}\vspace{1in}
    \caption{The unstable portion of $\alpha(K,n)$ with $\tau(K)\geq 0$ and $\epsilon(K)=1$ and $2\tau(K)>n.$}\label{unstabletauposepsilon1nless2tau}
  \end{minipage}
  \hspace{2in}
  \begin{minipage}[b]{0.3\textwidth}
 \begin{tikzpicture}
\node[anchor=south west,inner sep=0] at (0,0)    {\includegraphics[width=1.2\textwidth]{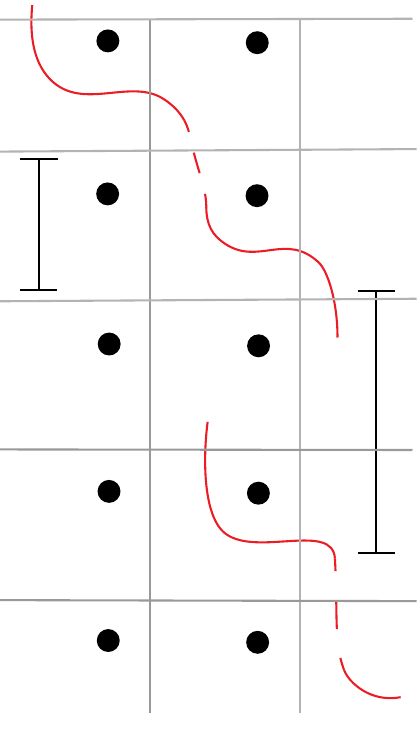}};
\node[rotate=90] at (5,4) {$2\tau(K)$};
\node[font=\tiny] at (3.6,5.8) {$\tikzcirc{2pt}$};
\node[font=\tiny] at (3.6,2.3) {$\tikzcirc{2pt}$};
\node[font=\tiny] at (2.5,3.4) {$\tikzcirc{2pt}$};
\node[font=\tiny] at (1.8,7.65) {$\tikzcirc{2pt}$};
\node[font=\tiny] at (2.35,7) {$\tikzcirc{2pt}$};
\node[font=\tiny] at (4,5.2) {$\tikzcirc{2pt}$};
\node[font=\tiny] at (.4,8.6) {$\tikzcirc{2pt}$};
\node[font=\tiny] at (.6,8.6) {$\kappa$};
\node[font=\tiny] at (3.8,5.2) {$\lambda$};
\node[font=\tiny] at (3.8,5.9) {$\xi_0$};
\node[font=\tiny] at (2.6,7.1) {$\mu_m$};
\node[font=\tiny] at (2,7.8) {$\eta_0$};
\node[font=\tiny] at (3.4,2.1) {$\eta_0$};
\node[font=\tiny] at (2.35,3.25) {$\kappa$};

\node[font=\tiny] at (1.6,0) {$\{i\} \times \R$};
\node[font=\tiny] at (3.4,0) {$\{i+1\} \times \R$};

\node[font=\tiny] at (1,7.55) {$\rho_{123}$};
\node[font=\tiny] at (4.2,5.5) {$\rho_2$};
\node[font=\tiny] at (2.6,5.8) {$\rho_{123}$};
\node[font=\tiny] at (2.2,7.6) {$\rho_2$};
\node[font=\tiny] at (2.3,6.5) {$\rho_{23}$};

\node[rotate=90] at (0,6) {$n-2\tau(K)$};
\end{tikzpicture}\vspace{1in}
    \caption{The unstable portion of $\alpha(K,n)$ with $\tau(K)\geq 0$ and $\epsilon(K)=1$ and $n\geq 2\tau(K).$}\label{unstabletauposepsilon1ngreater2tau}
  \end{minipage}
\end{figure}

\begin{lemma}\label{curveshape1}
    Suppose $\tau(K)\geq 0$ and $\epsilon(K)=1$. If $n<2\tau(K)$, then the essential component of the immersed curve has slope $2\tau(K)-n$ and turns down immediately after passing through $\{i+1\} \times \R$, see Figure \ref{unstabletauposepsilon1nless2tau}. If $n \geq 2\tau(K)$, then the essential component of the immersed curve has slope $2\tau(K)-n$ and turns down immediately after crossing through $\{i+1\} \times \R$, see Figure \ref{unstabletauposepsilon1ngreater2tau}. 
\end{lemma}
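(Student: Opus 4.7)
The plan is to read off the essential component $\gamma_0$ of $\alpha(K,n)$ directly from the type $D$ structure $\CFDhat(S^3 - \nu(K), n)$ using the LOT algorithm stated earlier in this section, and then apply the HRW dictionary between type $D$ structures and immersed train tracks. First, by \cite[Lemma 3.2]{Hom}, choose a horizontally simplified basis $\teta$ for $\CFK^-(K)$ in which $\teta_0 = \xi_0$ is simultaneously the vertically distinguished generator of some vertically simplified basis $\txi$. Since $\epsilon(K) = 1$, the element $\xi_0$ sits at the end of a horizontal arrow, so $\CFDhat(S^3 - \nu(K), n)$ contains the chain $\lambda \xrightarrow{\rho_2} \xi_0$; dually, $\eta_0$ is the target of a vertical chain whose final edge is $\kappa \xrightarrow{\rho_{123}} \eta_0$, as depicted in Figure \ref{typeDtauposepsilon1}.

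Next, apply the LOT algorithm with framing $n$ to attach the unstable chain between $\xi_0$ and $\eta_0$. In the case $n < 2\tau(K)$ this chain is $\xi_0 \xrightarrow{\rho_1} \mu_1 \xleftarrow{\rho_{23}} \mu_2 \xleftarrow{\rho_{23}} \cdots \xleftarrow{\rho_{23}} \mu_m \xleftarrow{\rho_3} \eta_0$ with $m = 2\tau(K) - n$; in the case $n > 2\tau(K)$ it is $\xi_0 \xrightarrow{\rho_{123}} \mu_1 \xrightarrow{\rho_{23}} \cdots \xrightarrow{\rho_{23}} \mu_m \xrightarrow{\rho_2} \eta_0$ with $m = n - 2\tau(K)$; and when $n = 2\tau(K)$ it degenerates to the single edge $\xi_0 \xrightarrow{\rho_{12}} \eta_0$, consistent with both limits. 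The rest of $\CFDhat(S^3 - \nu(K), n)$ decomposes into the chains coming from horizontal arrows of lengths $l_i$ and vertical arrows of lengths $k_i$ in the reduced basis, and by \cite[Proposition 2]{cabling} these contribute only to the non-essential components of $\alpha(K,n)$, so they can be disregarded when identifying $\gamma_0$.

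Now translate the subgraph built from $\lambda, \xi_0, \mu_1, \ldots, \mu_m, \eta_0, \kappa$ into a segment of immersed train track using \cite[Figure 19]{HRW}. Under this dictionary, $\rho_1$ and $\rho_3$ each move across $\lambda$ (shifting one row), $\rho_2$ moves across $\mu$, and $\rho_{23}$ and $\rho_{123}$ produce the characteristic diagonal segments that simultaneously cross $\mu$ and advance one row. Concatenating the $m$ edges labelled $\rho_{23}$ produces a straight diagonal spanning $|2\tau(K)-n|$ rows between $\{i\} \times \R$ and $\{i+1\} \times \R$, giving the slope claimed in the lemma. The boundary edges determine the turning behaviour: the final edge $\xleftarrow{\rho_3} \eta_0$ (when $n < 2\tau(K)$), respectively $\xrightarrow{\rho_2} \eta_0$ (when $n \geq 2\tau(K)$), routes the curve downward immediately after the last crossing of $\{i+1\} \times \R$, and the edge into $\kappa$ then continues this descent; this reproduces the pictures in Figures \ref{unstabletauposepsilon1nless2tau} and \ref{unstabletauposepsilon1ngreater2tau}.

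The main technical obstacle will be the last step, namely faithfully translating each Reeb chord label into the correct local piece of train track in the punctured torus and verifying that the resulting segments concatenate into a single component of $\alpha(K,n)$ rather than splitting; this requires keeping careful track of the $\iota_0/\iota_1$ decoration of each vertex and of the direction of each edge. Once that bookkeeping is in place, the shape and slope of $\gamma_0$ follow by inspection, and comparison with \cite[Proposition 2]{cabling} confirms that this is indeed the essential component.
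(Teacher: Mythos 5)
Your proposal follows essentially the same route as the paper: invoke Hom's lemma to find a horizontally simplified basis containing the vertically distinguished generator $\xi_0$, use the LOT algorithm to write down the type $D$ structure (in particular the unstable chain joining $\xi_0$ to $\eta_0$) in the two sub-cases $n < 2\tau(K)$ and $n \geq 2\tau(K)$, and then apply the HRW dictionary to translate that local piece of $\CFDhat(S^3 - \nu(K), n)$ into a segment of the immersed curve, reading off its slope and turning behaviour. The paper likewise carries out the translation by reference to the figures and to \cite[Sections 2.3--2.4]{HRW1} rather than by re-deriving the train-track rules from scratch, so your acknowledged ``main technical obstacle'' is resolved in the same way there.
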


\begin{figure}[!tbp]
  \centering
  \begin{minipage}[b]{0.3\textwidth}
  \begin{tikzpicture}

\node[anchor=south west,inner sep=0] at (0,0)    {\includegraphics[width=1.2\textwidth]{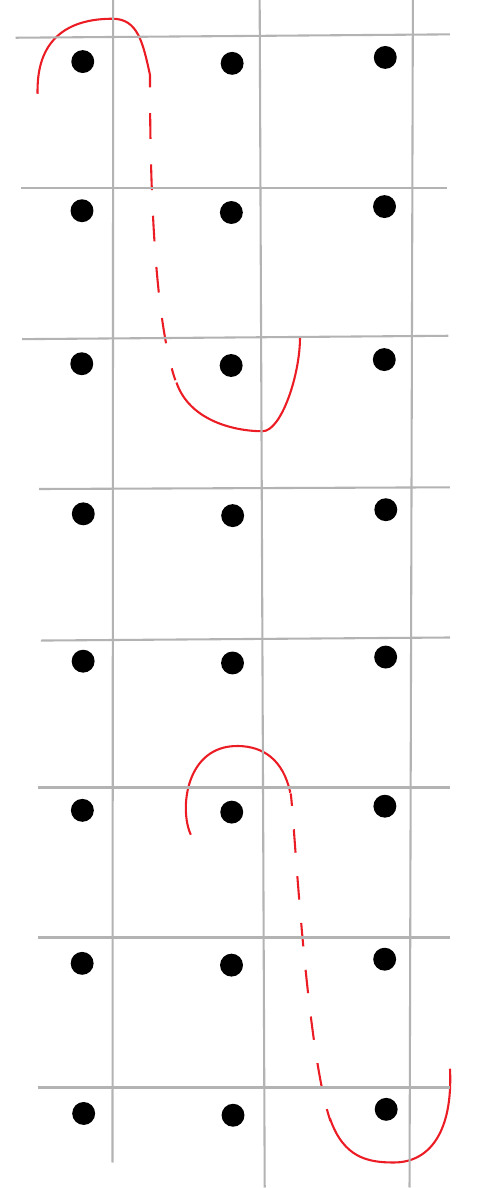}};
\node[font=\tiny] at (2.75,7.95) {$\tikzcirc{2pt}$};

\node[font=\tiny] at (2.75,4.6) {$\tikzcirc{2pt}$};
\node[font=\tiny] at (3,4.2) {$\tikzcirc{2pt}$};
\node[font=\tiny] at (3.2,2.6) {$\tikzcirc{2pt}$};
\node[font=\tiny] at (3.4,1.05) {$\tikzcirc{2pt}$};
\node[font=\tiny] at (4.3,.3) {$\tikzcirc{2pt}$};
\node[font=\tiny] at (2,4.2) {$\tikzcirc{2pt}$};
\node[font=\tiny] at (1.75,8.9) {$\tikzcirc{2pt}$};
\node[font=\tiny] at (3.2,8.9) {$\tikzcirc{2pt}$};

\node[font=\tiny] at (2.7,7.7) {$\xi_0$};
\node[font=\tiny] at (3.2,8) {$\rho_{3}$};
\node[font=\tiny] at (2.1,7.85) {$\rho_{123}$};
\node[font=\tiny] at (2.6,4.7) {$\eta_0$};
\node[font=\tiny] at (3.15,4.5) {$\rho_{3}$};
\node[font=\tiny] at (2.2,4.7) {$\rho_{1}$};
\node[font=\tiny] at (1.5,10) {$\rho_{23}$};
\node[font=\tiny] at (3.5,3.3) {$\rho_{23}$};

\node[font=\tiny] at (1.2,0) {$\{i\} \times \R$};
\node[font=\tiny] at (2.8,0) {$\{i+1\} \times \R$};

\end{tikzpicture}
    \caption{The unstable portion of $\alpha(K,n)$ with $\tau(K)\geq 0$ and $\epsilon(K)=-1$ and $n\geq2\tau(K).$}\label{unstabletauposepsilonnegngeq2tau}
  \end{minipage}
  \hspace{2in}
  \begin{minipage}[b]{0.3\textwidth}
 \begin{tikzpicture}

\node[anchor=south west,inner sep=0] at (0,0)    {\includegraphics[width=1.2\textwidth]{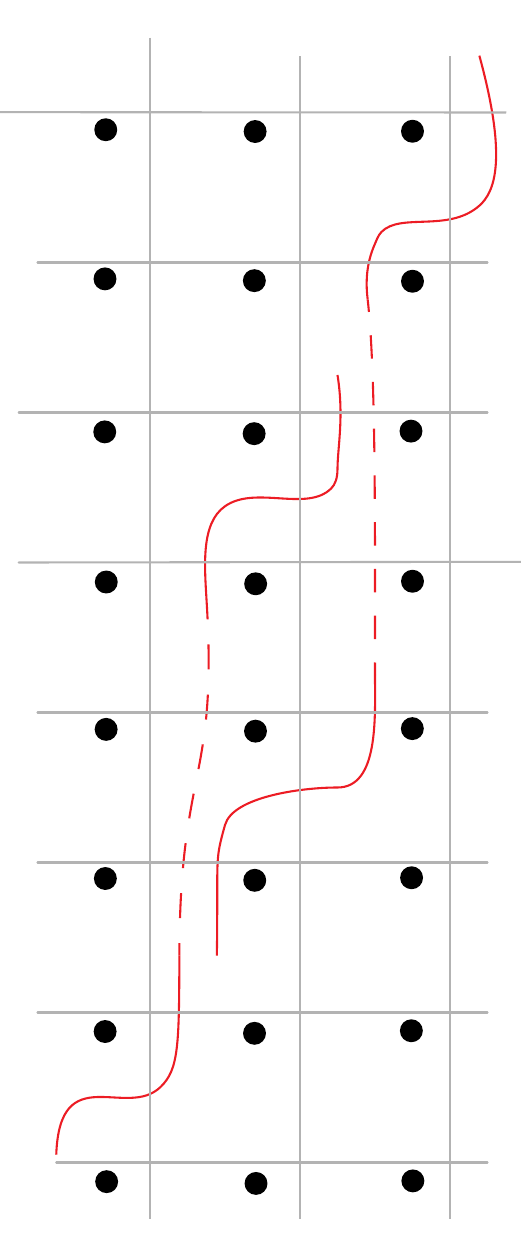}};
\node[font=\tiny] at (2.9,7.25) {$\tikzcirc{2pt}$};
\node[font=\tiny] at (3.3,8.1) {$\tikzcirc{2pt}$};
\node[font=\tiny] at (2.9,4.4) {$\tikzcirc{2pt}$};
\node[font=\tiny] at (2.15,3.75) {$\tikzcirc{2pt}$};
\node[font=\tiny] at (2,6.7) {$\tikzcirc{2pt}$};
\node[font=\tiny] at (2,5.2) {$\tikzcirc{2pt}$};
\node[font=\tiny] at (1.75,2.3) {$\tikzcirc{2pt}$};
\node[font=\tiny] at (1.8,3.75) {$\tikzcirc{2pt}$};
\node[font=\tiny] at (1.45,1.5) {$\tikzcirc{2pt}$};
\node[font=\tiny] at (3.65,5.2) {$\tikzcirc{2pt}$};
\node[font=\tiny] at (3.65,6.65) {$\tikzcirc{2pt}$};
\node[font=\tiny] at (3.65,8.1) {$\tikzcirc{2pt}$};

\node[font=\tiny] at (1.4,0) {$\{i\} \times \R$};
\node[font=\tiny] at (2.9,0) {$\{i+1\} \times \R$};

\node[font=\tiny] at (.8,1.6) {$\rho_1$};
\node[font=\tiny] at (1.8,1.7) {$\rho_3$};
\node[font=\tiny] at (1.4,3) {$\rho_{23}$};
\node[font=\tiny] at (1.55,4.5) {$\rho_{23}$};
\node[font=\tiny] at (1.6,6) {$\rho_{23}$};
\node[font=\tiny] at (2,7.2) {$\rho_1$};
\node[font=\tiny] at (3.1,7.5) {$\rho_3$};
\node[font=\tiny] at (3,7) {$\xi_0$};
\node[font=\tiny] at (3,4.2) {$\eta_0$};
\node[font=\tiny] at (1.8,6.8) {$\mu_1$};
\node[font=\tiny] at (1.8,5.3) {$\mu_i$};
\node[font=\tiny] at (1.6,3.8) {$\mu_j$};
\node[font=\tiny] at (2.3,3.8) {$\kappa$};
\node[font=\tiny] at (1.6,2.2) {$\mu_m$};
\node[font=\tiny] at (1.4,1.65) {$\eta_0$};
\node[font=\tiny] at (3.4,8.2) {$\lambda$};

\end{tikzpicture}
    \caption{The unstable portion of $\alpha(K,n)$ with $\tau(K)\geq 0$ and $\epsilon(K)=-1$ and $n\leq 0\leq 2\tau(K).$}\label{unstablechaintauposepsilonnegnneg}
  \end{minipage}
\end{figure}

\begin{proof}
    When $\epsilon(K)=1$, by \cite{Hom} there is a reduced horizontally simplified basis so that the vertically distinguished generator $\xi_0$ of $\CFK^-(K)$ is an element of this horizontally simplified basis and occurs at the end of a horizontal arrow (symmetrically the horizontally distinguished generator $\eta_0$ occurs at the end of a vertical arrow). If $\tau(K)\geq 0$ then the algorithm from \cite[Theorem 11.26]{LOT} shows that the type $D$ structure contains the portion shown in Figure \ref{typeDtauposepsilon1}, where the dotted arrow is replaced by the appropriate unstable chain. 
    
Then the algorithm in \cite[Sections 2.3-2.4]{HRW1} shows that the essential component of the immersed curve lifted to the cover $\R^2 \setminus \pi^{-1}(z)$ has the form shown. In Figure \ref{unstabletauposepsilon1nless2tau} and \ref{unstabletauposepsilon1ngreater2tau} we see the resulting curves for $n<2\tau(K)$ and $n \geq 2\tau(K)$ respectively and indicate how the curves are built from the type $D$ structure. Intersections with the vertical lines in the figure correspond to generators of $\iota_0\CFDhat(S^3- \nu(K),n)$ and the intersections with the horizontal lines correspond to generators of $\iota_1\CFDhat(S^3-\nu(K),n)$. If $\delta(x)=\rho_{I}\otimes y+\cdots$, then there is an arc $\rho_I$ from $x$ to $y$, as described in the figures. \qedhere

\end{proof}

Similarly, we can show
\begin{lemma}\label{curveshape2}
    Suppose $\tau(K)\geq 0$ and $\epsilon(K)=-1$. If $n \geq 2\tau(K)$, then the essential component of the immersed curve has slope $2\tau(K)-n$ and turns up after crossing through $\{i+1\} \times \R$, see Figure \ref{unstabletauposepsilonnegngeq2tau}. If $n<2\tau(K)$, then the essential component of the immersed curve has slope $2\tau(K)-n$ and turns up after crossing through $\{i+1\} \times \R$, see Figure \ref{unstablechaintauposepsilonnegnneg}.
\end{lemma}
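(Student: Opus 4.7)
The proof should run in exact parallel to that of Lemma \ref{curveshape1}, with the roles dictated by $\epsilon(K) = 1$ replaced throughout by those dictated by $\epsilon(K) = -1$. First, I would invoke \cite[Lemma 3.2]{Hom} to choose a horizontally simplified basis for $\CFK^-(K)$ in which the vertically distinguished generator $\xi_0$ appears; because $\epsilon(K) = -1$, the basis element $\xi_0$ occurs at the beginning of a horizontal arrow, and symmetrically $\eta_0$ occurs at the beginning of a vertical arrow. Feeding this into the algorithm of \cite[Theorem 11.26]{LOT}, the type D structure $\CFDhat(S^3 - \nu(K), n)$ contains the portion depicted in Figure \ref{typeDtauposepsilonminus1}: the horizontal piece contributes $\lambda \xleftarrow{\rho_3} \xi_0$ and the vertical piece contributes $\eta_0 \xrightarrow{\rho_1} \kappa$, with the dotted arrow from $\xi_0$ to $\eta_0$ to be filled in by the appropriate unstable chain.

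Next I would split into the two cases dictated by the sign of $n - 2\tau(K)$ and use the three bulleted rules recalled in Section \ref{CFD} to write down the unstable chain explicitly. For $n \geq 2\tau(K)$, insert the chain $\xi_0 \xrightarrow{\rho_{123}} \mu_1 \xrightarrow{\rho_{23}} \cdots \xrightarrow{\rho_{23}} \mu_m \xrightarrow{\rho_2} \eta_0$ with $m = n - 2\tau(K)$; for $n < 2\tau(K)$, insert $\xi_0 \xrightarrow{\rho_1} \mu_1 \xleftarrow{\rho_{23}} \cdots \xleftarrow{\rho_{23}} \mu_m \xleftarrow{\rho_3} \eta_0$ with $m = 2\tau(K) - n$. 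In either case one then applies the algorithm from \cite[Sections 2.3--2.4]{HRW1} to convert this piece of the type D structure, vertex by vertex and edge by edge, into an arc in the lift of the essential component $\gamma_0$ to $\R^2 \setminus \pi^{-1}(z)$, where crossings with vertical lines correspond to $\iota_0$ generators and crossings with horizontal lines correspond to $\iota_1$ generators.

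The slope statement then falls out of a count: the unstable chain together with the $\rho_3$ exit at $\xi_0$ and the $\rho_1$ exit at $\eta_0$ contribute exactly $|2\tau(K) - n|$ horizontal crossings (in addition to one vertical crossing) as the curve travels from $\{i\}\times \R$ to $\{i+1\}\times \R$, giving slope $2\tau(K) - n$ in both cases. The key qualitative claim is the \emph{direction} of the turn after crossing $\{i+1\}\times \R$: here the turn is upward because $\xi_0$ has its unique horizontal arrow \emph{outgoing} (a $\rho_3$) rather than incoming, which under the HRW embedding rules \cite[Figure 19]{HRW} forces the continuation of $\gamma_0$ past the $\tikzcirc{2pt}$ vertex labelled $\lambda$ to proceed upward through the next strip. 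The resulting pictures are those shown in Figures \ref{unstabletauposepsilonnegngeq2tau} and \ref{unstablechaintauposepsilonnegnneg}.

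The only real obstacle is bookkeeping: keeping straight the orientation of each arrow in the unstable chain, translating each label $\rho_I$ into the correct segment of train track under \cite[Figure 19]{HRW}, and verifying that the endpoints of successive segments match up consistently so that the concatenation is an embedded arc of the claimed slope. This is a routine but careful case check, made easier by the observation that the two cases $n \geq 2\tau(K)$ and $n < 2\tau(K)$ differ only in how many extra $\tikzcirc{2pt}$ vertices appear along the unstable chain and in which of the two possible orientations these vertices are traversed, and that in both cases the sign of the turn at $\xi_0$ is controlled purely by $\epsilon(K) = -1$.
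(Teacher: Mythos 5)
Your proposal mirrors the paper's intended (but unwritten) argument exactly: the paper proves Lemma~\ref{curveshape1} and then asserts Lemma~\ref{curveshape2} follows ``similarly,'' and you supply precisely that parallel reasoning---using Hom's lemma to place $\xi_0$ at the \emph{start} of a horizontal arrow (and $\eta_0$ at the start of a vertical one) since $\epsilon(K)=-1$, inserting the two unstable-chain shapes according to the sign of $n-2\tau(K)$, and applying the HRW embedding rules to read off slope $2\tau(K)-n$ and the upward turn. This is correct and matches the paper's approach.
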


The statements about the form of the essential component of the immersed curve in the case $\tau(K)\leq0$ are similar. In summary, the essential component of the immersed curve has slope $2\tau(K)-n$ and turns up, down or continues straight depending on whether $\epsilon(K)=-1,1$ or $0$.

\subsection{(1,1)-Unknot Patterns}\label{(1,1)}
In this section, we review some notation and results about $(1,1)$ unknot patterns. In the case that the $(1,1)$ pattern knot $P$ is an unknot pattern, meaning that $P(U) \sim U$, Chen showed that the $\beta$ curve for the genus $1$ doubly-pointed Heegaard diagram for $P$ can be encoded by two integers $(r,s)$, where $\gcd(2r-1,s+1)=1$ \cite[Theorem 5.1]{Chen}. In this parametrization, $r$ denotes the number of rainbows and $s$ denotes the number of stripes (see \cite[Figure 15]{Chen}). The pattern described by the pair $(r,s)$ corresponds to the two bridge link $\mathfrak{b}(2|s|+4|r|,\epsilon(r)(2|r|-1))$ \cite[Theorem 5.4]{Chen}.

For example, see Figure \ref{Mazur(0,2)} where we have drawn the doubly pointed Bordered Heegaard diagram for the unknot pattern described by the pair $(r,s)=(4,2)$, and Figure \ref{Q02withknot} where we have drawn the same genus $1$ bordered Heegaard diagram with the pattern knot that it determines. In general, the knot determined by the $(1,1)$ unknot pattern given by the pair $(r,s)$ has a presentation with $r-1$ rainbow arcs and $s+1$ stripes, see Figure \ref{genmazurknot}. 

\begin{figure}[!tbp]
  \centering
  \begin{minipage}[b]{0.3\textwidth}
  \begin{tikzpicture}
\node[anchor=south west,inner sep=0] at (0,0)    {\includegraphics[width=1.2\textwidth]{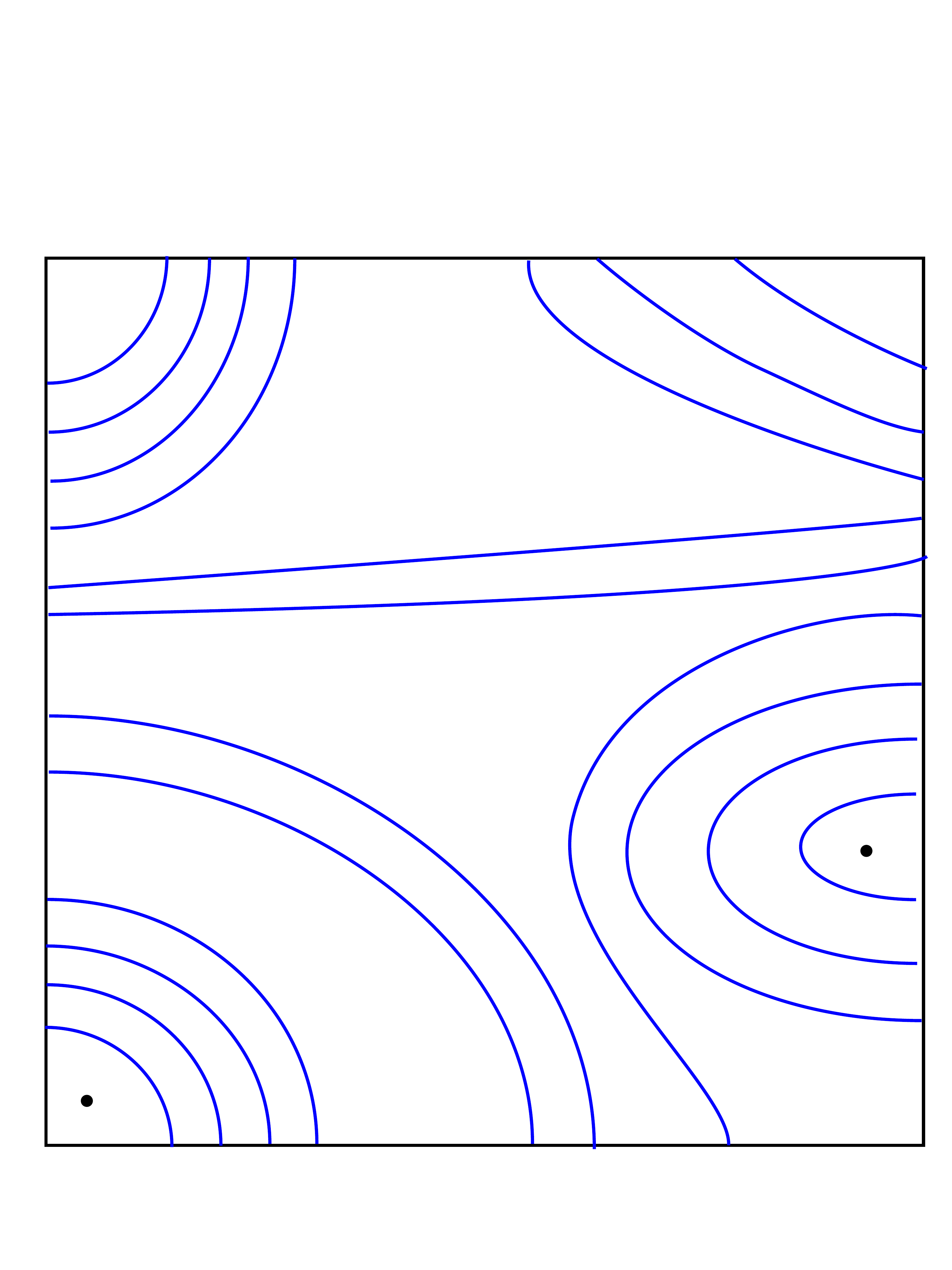}};
\node[font=\tiny] at (.5,1) {$w$};
\node[font=\tiny] at (4.7,2.25) {$z$};
\end{tikzpicture}
    \caption{The $(1,1)$ pattern determined by the pair $(r,s)=(4,2).$}\label{Mazur(0,2)}
  \end{minipage}
  \hspace{1.5in}
  \begin{minipage}[b]{0.3\textwidth}
  \begin{tikzpicture}
\node[anchor=south west,inner sep=0] at (0,0)    {\includegraphics[width=1.4\textwidth]{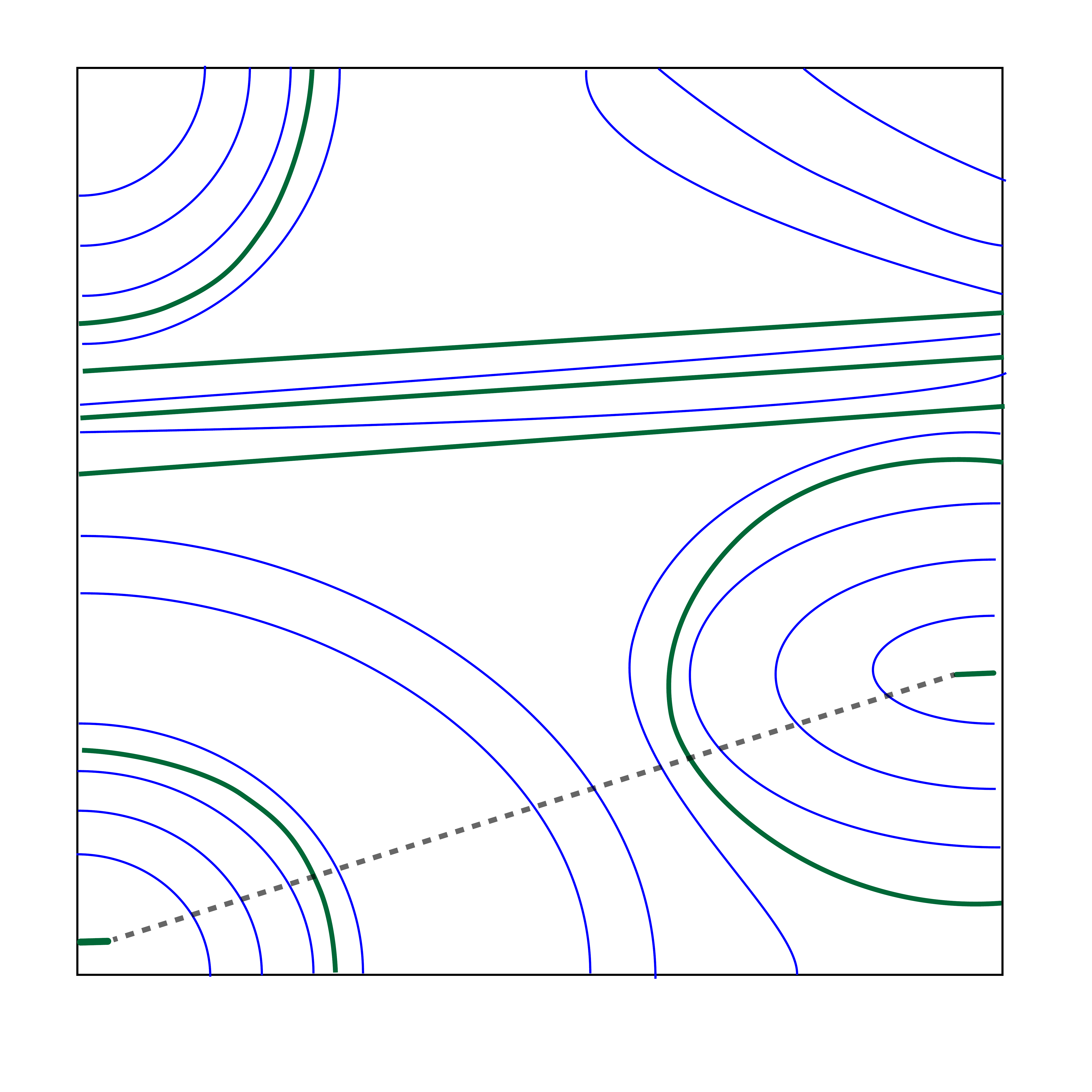}};
\end{tikzpicture}
    \caption{The knot in green in $S^1 \times D^2$ determined by the $(1,1)$ pattern from Figure \ref{Mazur(0,2)}.}\label{Q02withknot}
  \end{minipage}
\end{figure}

As above, let $\CFKhat(\alpha,\beta,z,w)$ denote the intersection Floer homology of the two curves $\alpha$ and $\beta$ in $T^2\setminus \{z,w\}$ as described in \cite[Theorem 1.2]{Chen}. The generators of $\CFKhat(\alpha,\beta,z,w)$ are the intersection points of the two curves, and the differential counts embedded bigons with left boundary on the $\beta$ curve and right boundary on the $\alpha$ curve. As proved in \cite{chenhanselman}, we can recover the $UV=0$ quotient of the full knot Floer complex by considering disks that contain either $z$ or $w$ basepoints (but not both) and label them by $V$ and $U$ respectively. The component of the differential induced by counting bigons crossing the $z$ basepoint will be called \emph{vertical} differentials and denoted $\partial^v$, and those crossing the $w$ basepoint \emph{horizontal} differentials and denoted $\partial^h$.

\begin{figure}[!tbp]
\centering
  \begin{tikzpicture}
\node[anchor=south west,inner sep=0] at (0,0)   {\includegraphics[scale=.3]{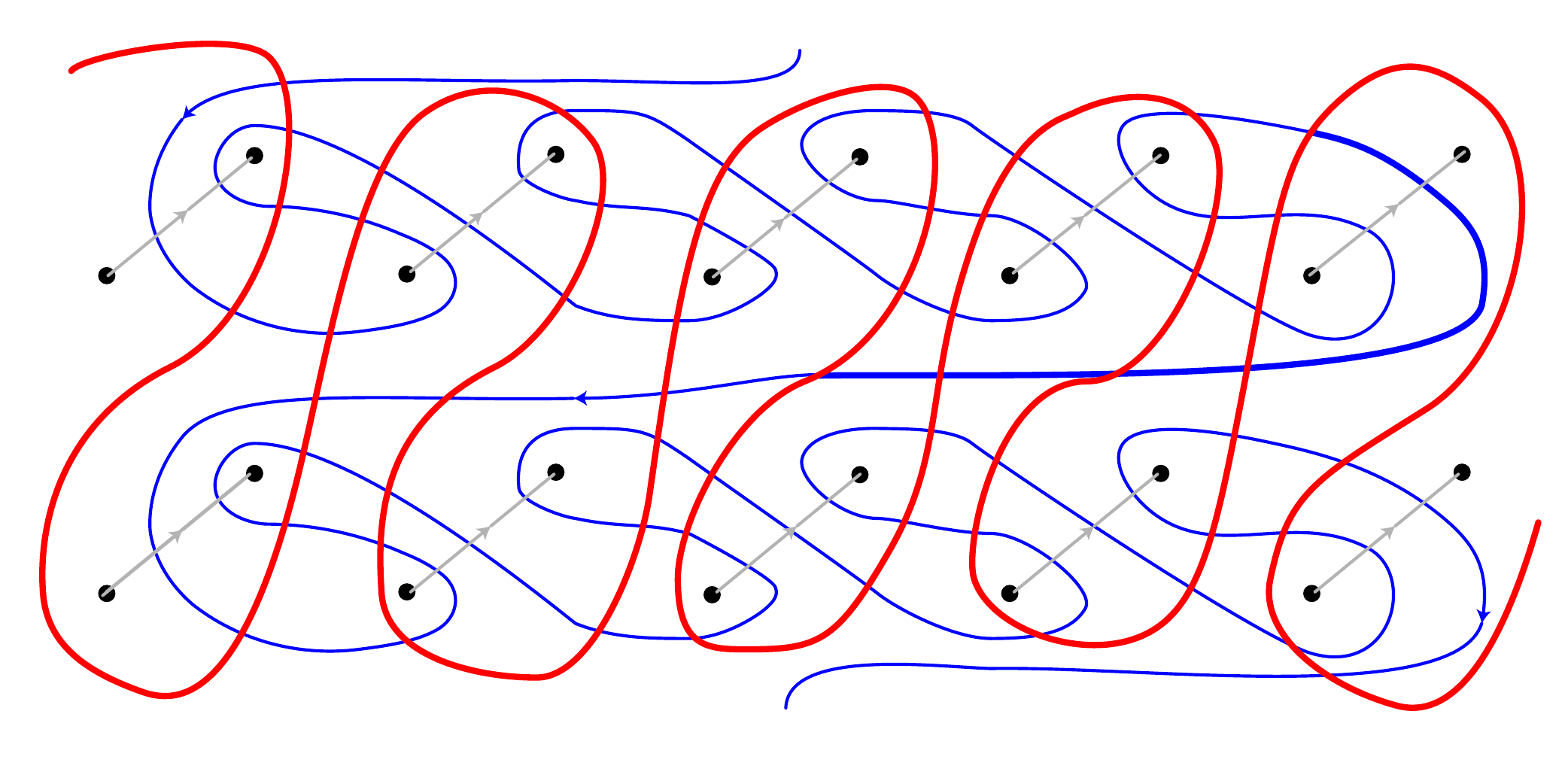}};

  \node at (5.6,2.8) {$\tikzcirc{2pt}$};
  \node at (1.55,3.2) {$\tikzcirc{2pt}$};
  \node at (1.95,4.7) {$\tikzcirc{2pt}$};
  \node at (2.23,3.05) {$\tikzcirc{2pt}$};
  \node at (3.8,3.3) {$\tikzcirc{2pt}$};
   \node at (8.85,4.4) {$\tikzcirc{2pt}$};
   
   \node at (1.6,1) {$\tikzcirc{2pt}$};
   
   \node at (2.15,2.6) {$\tikzcirc{2pt}$};
   \node at (1.9,1.75) {$\tikzcirclee{2pt}$};
   \node at (2.45,3.8) {$\tikzcirclee{2pt}$};
\node at (2.59,4.15) {$\tikzcirc{2pt}$};
    \node[font=\small, red] at (1,5.2) {$\talpha(T_{2,3},0)$};
    \node[font=\small, blue] at (5,5.2) {$\tbeta$};
     \node[font=\tiny, blue] at (3.5,2.7) {$A=0$};
     \node[font=\tiny, blue] at (3.5,4.85) {$A=3$};
     \node[font=\tiny, blue] at (7.5,.65) {$A=-3$};
   \node[font=\tiny] at (2.75,4.18) {$k$};
    \node[font=\tiny] at (5.7,2.7) {$c$};
  \node[font=\tiny] at (1.55,3) {$e$};
  \node[font=\tiny] at (2.1,4.55) {$f$};
  \node[font=\tiny] at (8.9,4.2) {$a$};
  \node[font=\tiny] at (4,3.3) {$b$};
   \node[font=\tiny] at (2.3,2.9) {$d$};
   \node[font=\tiny] at (2.3,2.5) {$g$};
\node[font=\tiny] at (1.5,.85) {$h$};
\node[font=\tiny] at (2,1.6) {$i$};
\node[font=\tiny] at (2.5,3.6) {$j$};

    \node[font=\tiny] at (.5,3.35) {$w$};
    \node[font=\tiny] at (.5,1.2) {$w$};
    
    \node[font=\tiny] at (2.9,3.35) {$w$};
    \node[font=\tiny] at (2.9,1.2) {$w$};
    
    \node[font=\tiny] at (5,3.35) {$w$};
    \node[font=\tiny] at (5,1.2) {$w$};
    
    \node[font=\tiny] at (7,3.35) {$w$};
    \node[font=\tiny] at (7,1.2) {$w$};
    \node[font=\tiny] at (9,3.35) {$w$};
    \node[font=\tiny] at (9,1.2) {$w$};
    
 \node[font=\tiny] at (1.9,2.1) {$z$};
    \node[font=\tiny] at (1.9,4.2) {$z$};
     \node[font=\tiny] at (3.9,2.1) {$z$};
    \node[font=\tiny] at (3.9,4.2) {$z$};
      \node[font=\tiny] at (6,2.1) {$z$};
    \node[font=\tiny] at (6,4.2) {$z$};
      \node[font=\tiny] at (8,2.1) {$z$};
    \node[font=\tiny] at (8,4.2) {$z$};
      \node[font=\tiny] at (10.1,2.1) {$z$};
    \node[font=\tiny] at (10.1,4.2) {$z$};

\end{tikzpicture}
    \caption{The lifted pairing diagram for $\CFKhat(\talpha(T_{2,3},0),\tbeta, w,z).$}\label{0framedpairing}
\end{figure}

Now, if $\pi: \R^2 \to T^2$ denotes the universal cover of the torus, let $\tbeta$ be a connected component of $\pi^{-1}(\beta)$ in $\R^2\setminus \{\pi^{-1}(z),\pi^{-1}(w)\}$ and let $\talpha(K,n)$ be a lift of $\alpha(K,n)$ to $\R^2$, as in Figures \ref{unstabletauposepsilon1nless2tau}-\ref{unstablechaintauposepsilonnegnneg}. Then by \cite[Proof of Theorem 1.2]{Chen} $\CFKhat(\talpha,\tbeta, \pi^{-1}(z),\pi^{-1}(w)) \cong \CFKhat(\alpha,\beta,z,w)$. Indeed, it is easy to see that there is a correspondence at the level of generators, and it is similarly straightforward to see that differentials on both sides agree. See Figures \ref{0framedpairing}, \ref{1framedtrefoilpairing}, and \ref{minus1framedtrefoilpairing}. Throughout we work with the lifted pairing diagram. We assume that the intersection between the two curves is \emph{reduced}, meaning that the only bigons contributing to the differential are the bigons that cross either the $z$ or the $w$ basepoint, this is easily obtained by an isotopy of $\alpha(K,n)$ across the Whitney disks that don't contain a basepoint. With these conventions, the following is proved in \cite[Theorem 1.2 and Lemma 4.1]{Chen} and \cite[Theorem 6.1]{chenhanselman}:

\begin{theorem}\label{Pairing}
For $P$ a $(1,1)$ pattern, $\HFKhat(S^3,P_n(K))=\CFKhat(\talpha(K,n),\tbeta(P))$ and moreover $\CFK_{\F[U,V]/UV}(S^3,P_n(K)) \cong (\CFKhat(\talpha(K,n),\tbeta,\pi^{-1}(z),\pi^{-1}(w)),\partial^v, \partial^h)$. Furthermore, given two intersection points $x$ and $y$ between $\talpha(K,n)$ and $\tbeta(P)$, $A(y)-A(x)=\ell_{x,y}\cdot\delta_{w,z}$, where $\ell_{x,y}$ is an arc on the $\beta$ curve that goes from $x$ to $y$ and $A$ denotes the Alexander grading of generators of the knot Floer homology. 
\end{theorem}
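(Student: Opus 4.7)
The plan is to assemble the statement from three prior ingredients: the bordered pairing theorem of Lipshitz-Ozsv\'ath-Thurston, the immersed curve reformulation of Hanselman-Rasmussen-Watson (HRW) for type D structures on torus-bounded 3-manifolds, and Chen's description of the type A structure associated to a $(1,1)$-pattern as a single curve $\beta(P)$ in the doubly-punctured torus. First I would write $(S^3, P_n(K))$ as the union of $S^3 \setminus \nu(K)$ (with meridian framed $n$) and $(S^1 \times D^2, P)$, and invoke the bordered pairing theorem to obtain $\CFKhat(S^3, P_n(K)) \cong \CFAhat(S^1 \times D^2, P) \boxtimes \CFDhat(S^3 \setminus \nu(K), n)$. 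Then, after passing $\CFDhat$ to the immersed multicurve $\alpha(K,n)$ and $\CFAhat$ to the curve $\beta(P)$ via Chen's theorem, the work of HRW and Chen-Hanselman identifies the box tensor product with the Lagrangian intersection Floer homology $\CFKhat(\alpha,\beta,z,w)$ in $T^2 \setminus \{z,w\}$, giving the first isomorphism.

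Next I would justify passing to the universal cover. Because $\beta$ is the image of a properly embedded line $\tbeta \subset \R^2 \setminus \pi^{-1}(\{z,w\})$, each intersection point $x \in \alpha \cap \beta$ in the torus lifts to a unique intersection in any fundamental domain containing a chosen lift $\talpha(K,n)$ of $\alpha(K,n)$ against $\tbeta$. Every embedded bigon counted by the differential has boundary a disk, hence is simply connected and lifts uniquely; conversely every bigon upstairs projects to a bigon downstairs, establishing the chain isomorphism $\CFKhat(\alpha,\beta,z,w) \cong \CFKhat(\talpha(K,n),\tbeta, \pi^{-1}(z), \pi^{-1}(w))$. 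Reducedness of the intersection (arranged by isotopy across the contractible complementary bigons that avoid both basepoints) ensures that the only contributions to the differential are the bigons crossing exactly one basepoint.

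For the $UV=0$ refinement, I would separate the differential into $\partial^v$ counting bigons crossing $z$ (but not $w$) and $\partial^h$ counting bigons crossing $w$ (but not $z$), and weight them by $V$ and $U$ respectively. The identification with $\CFK_{\F[U,V]/UV}(S^3, P_n(K))$ is Theorem 6.1 of Chen-Hanselman; the essential point is that any disk in the symmetric product for the pairing diagram projects to a bigon in the doubly-punctured torus under the genus-$1$ Heegaard diagram, and the disks contributing to the $UV=0$ quotient are exactly those crossing a single basepoint, so the horizontal-vertical decomposition matches.

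Finally, for the Alexander grading formula, I would use the standard fact that, in a doubly-pointed Heegaard diagram, the Alexander grading is well defined up to an overall shift by $A(y) - A(x) = n_z(\phi) - n_w(\phi)$ for any Whitney disk (equivalently, any domain) $\phi$ connecting $x$ to $y$. Taking the domain to be a thin strip along an arc $\ell_{x,y}$ on $\tbeta$ from $x$ to $y$ reduces $n_z(\phi) - n_w(\phi)$ to the signed intersection number of $\ell_{x,y}$ with the cochains $\delta_{w,z}$ dual to the basepoints, giving the stated formula. The main subtle point in the argument is the identification of the $UV=0$ complex: one must check that no ``composite'' disks crossing both basepoints survive after setting $UV=0$ and that the sign/orientation conventions for horizontal vs.\ vertical arrows in the pairing diagram match those coming from the $\boxtimes$ product, but both are handled by the reduced intersection setup and by Chen-Hanselman's proof of their pairing theorem, which we are invoking wholesale.
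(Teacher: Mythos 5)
Your proposal is correct and matches the paper's approach: the paper states this theorem simply by citing \cite[Theorem 1.2 and Lemma 4.1]{Chen} and \cite[Theorem 6.1]{chenhanselman} rather than giving an independent proof, and your sketch assembles exactly those ingredients --- the bordered pairing theorem, the immersed curve reformulation of type D structures, Chen's $(1,1)$-pattern $\beta$-curve description, the lift to the universal cover (Chen's proof of his Theorem 1.2), and the Chen--Hanselman $UV=0$ refinement. The paper's surrounding discussion (the paragraph before the theorem statement) covers the same reduction to the lifted pairing diagram and the reducedness hypothesis that you spell out, so there is no substantive difference in route.
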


See Figure \ref{0framedpairing} for an example, where we have drawn the lifted pairing diagram for the satellite knot $Q^{0,3}_0(T_{2,3})$. In that figure, we have labelled some intersection points, and drawn the $\delta_{w,z}$ arcs. Theorem \ref{Pairing} implies that the intersection points are in bijection with the generators of $\HFKhat(S^3,Q^{0,3}_0(T_{2,3}))$. Moreover, by taking an arc along the $\beta$ curve from $c$ to $a$, for example, we see that $A(a)-A(c)=-1$. The knot Floer homology has a symmetry given by $\HFKhat(S^3,K,A) \cong \HFKhat(S^3,K,-A)$, and we can see this symmetry in the pairing diagram by rotating the whole picture by $\pi$ and exchanging the $w$ and $z$ basepoints. It follows that $A(c)=0$ and we can always upgrade the relative Alexander grading given by Theorem \ref{Pairing} to an absolute Alexander grading. In Figure \ref{0framedpairing} we find $A(b)=3$, $A(e)=A(d)=4$ and $A(f)=3$. 

\begin{figure}[!tbp]
\centering
  \begin{tikzpicture}
\begin{tikzcd}
	d & j & b \\
	& k \\
	& h & i \\
	g & l
	\arrow["{V^3}"{description}, from=1-1, to=4-1]
	\arrow["V"{description}, from=3-2, to=4-1]
	\arrow["U"', from=1-2, to=1-1]
	\arrow["U"', from=3-3, to=3-2]
	\arrow["V"{description}, from=1-2, to=2-2]
	\arrow["V"{description}, from=1-3, to=2-2]
	\arrow["V"{description}, from=3-3, to=4-2]
	\arrow["U", from=4-2, to=4-1]
\end{tikzcd}
\end{tikzpicture}
    \caption{The piece of the complex $\CFK_{\F[U,V]/UV}(S^3,Q^{0,3}_0(T_{2,3}))$ that contains the intersection point $d$ with $A(d)=\tau(Q^{0,3}_0(T_{2,3})),$ and $d+h$ generates $\HFhat(S^3)$.}\label{0framedpairingcomplex}

\end{figure}
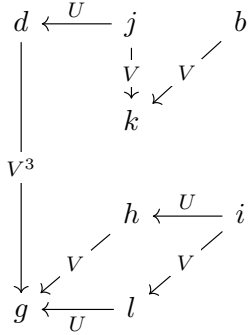
Another consequence of Theorem \ref{Pairing} is that since we can recover the $UV=0$ quotient of the full knot Floer complex, we can compute both $\tau$ and $\epsilon$ of satellite knots with $(1,1)$-patterns. We return to this in Section \ref{tausectionMazur} later, but we remark here that by counting disks that cross only the $z$ basepoint in Figure \ref{0framedpairing}, the intersection points $d$, $g$, and $h$ form a subcomplex of $\CFKhat(Q^{0,3}_0(T_{2,3}))$ such that the cycle $d+h$ generates $\HFhat(S^3)$ (obtained by setting $V=1$ in the above subcomplex). This cycle can be extended to a vertically simplified basis of $\CFK^{-}(Q^{0,3}_0(T_{2,3}))$ in the sense of \cite[Section 2]{Hom}. Moreover, the intersection points $i$ and $j$ satisfy $\partial^h(i+j)=d+h$, so the distinguished element of the vertically simplified basis is in the image of the horizontal differential and this implies \cite[Section 3]{Hom} that $\epsilon(Q^{0,3}_0(T_{2,3}))=1$. Further, it is easy to see that the intersection point $d$ satisfies $A(d)=\tau(Q^{0,3}_0(K))$. See Figure \ref{0framedpairingcomplex}, where we have indicated a portion of the complex over $\F[U,V]/UV$. Note that the above argument only involved intersection points between the unstable portion of the curve $\alpha(T_{2,3},0)$ in the first column and the $\beta$ curve. We return to this observation in section \ref{tausectionMazur}, where we see that this holds in general for the patterns given by the $\beta$ curve $\beta(i,j)$.

The pairing diagrams and their lifts become more complicated when we consider knots with non-zero framing since the unstable chain gets longer for most values of $n$, which we need for computing the knot Floer homology of satellites with $n$-twisted patterns. For example, see Figures \ref{1framedtrefoilpairing} and \ref{minus1framedtrefoilpairing} where we have the pairing diagram for $Q^{0,3}_{-1}(T_{2,3})$ and $Q^{0,3}_{1}(T_{2,3})$. In those figures, the intersection point $c$ satisfies $A(c)=0$ and we have indicated some of the Alexander gradings of intersection points.

    \begin{figure}[!tbp]
  \centering
  \begin{minipage}[b]{0.3\textwidth}
  \begin{tikzpicture}
\node[anchor=south west,inner sep=0] at (0,0)    {\includegraphics[width=1.2\textwidth]{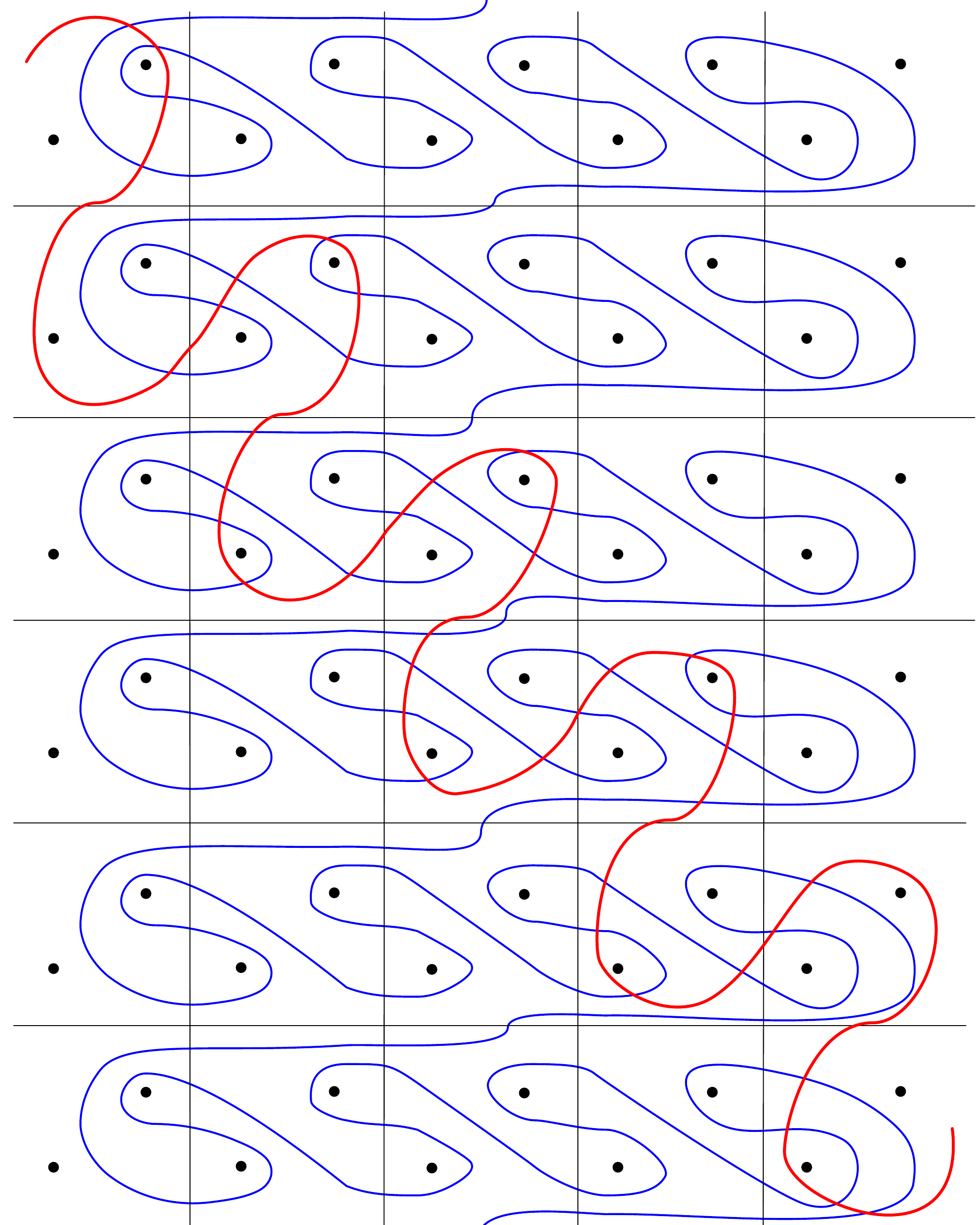}};
\node at (2.2,3.05) {$\tikzcirc{2pt}$};
\node at (.7,5.45) {$\tikzcirc{2pt}$};
\node at (.9,4.4) {$\tikzcirc{2pt}$};

\node[font=\tiny] at (1.4,6.4) {$A=9$};
\node[font=\tiny] at (1.4,5.3) {$A=6$};
\node[font=\tiny] at (1.4,4.2) {$A=3$};
\node[font=\tiny, red] at (.2,6.3) {$\talpha(T_{2,3},1)$};
\end{tikzpicture}
    \caption{The pairing diagram for $\CFKhat(\alpha(T_{2,3},1),\beta(Q^{0,3})).$}\label{1framedtrefoilpairing}
  \end{minipage}\hspace{2in}
  \begin{minipage}[b]{0.3\textwidth}
    \begin{tikzpicture}
\node[anchor=south west,inner sep=0] at (0,0)    {\includegraphics[width=1.2\textwidth]{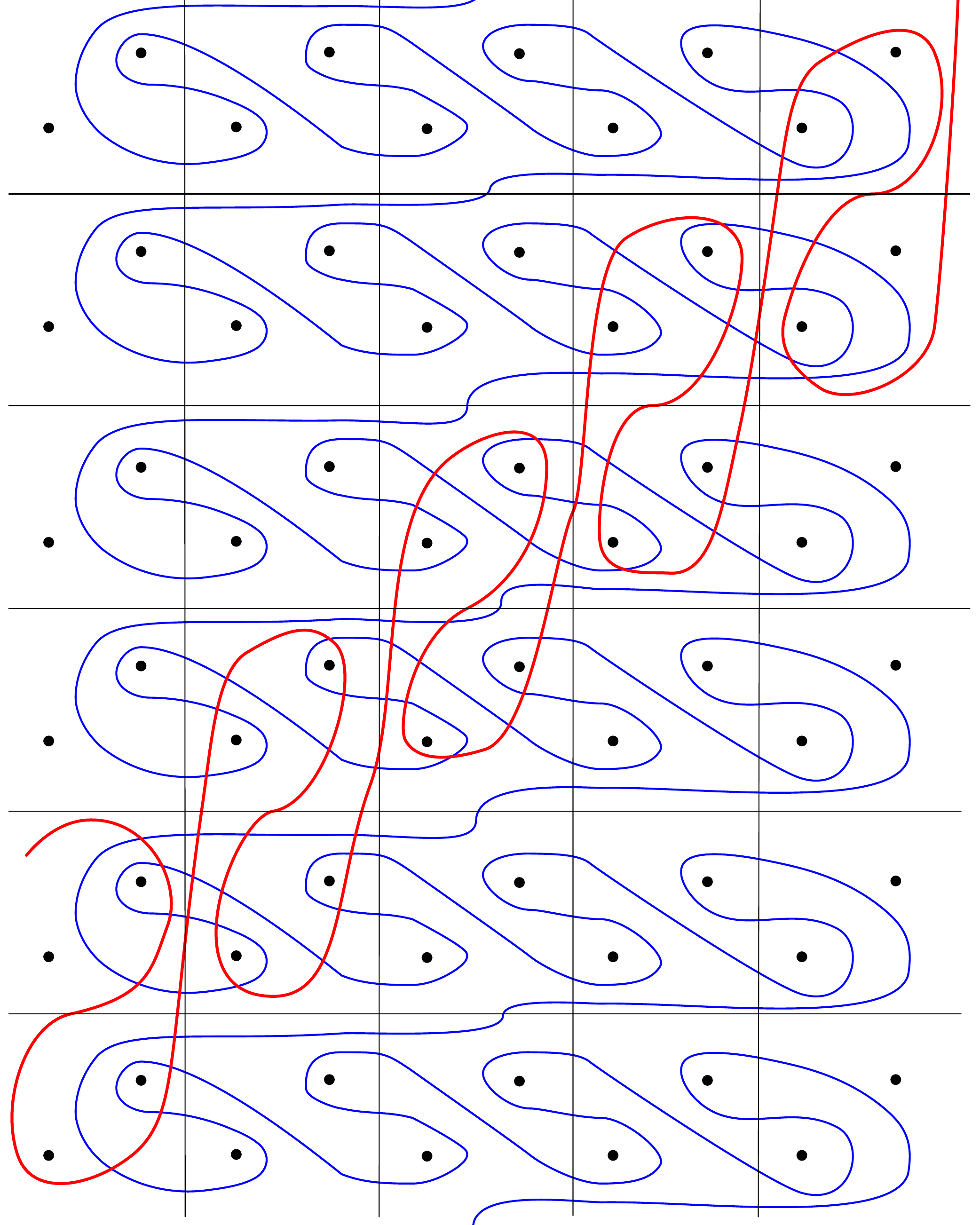}};
\node at (2.3,3.1) {$\tikzcirc{2pt}$};
\node at (1.1,2.3) {$\tikzcirc{2pt}$};
\node at (4.05,5.5) {$\tikzcirc{2pt}$};
\node[font=\tiny, red] at (4.8,6.4) {$\talpha(T_{2,3},-1)$};
\node[font=\tiny] at (1.4,6.4) {$A=9$};
\node[font=\tiny] at (1.4,5.3) {$A=6$};
\node[font=\tiny] at (1.4,4.2) {$A=3$};
\end{tikzpicture}
    \caption{The pairing diagram for $\CFKhat(\alpha(T_{2,3},-1), \beta(Q^{0,3})).$}\label{minus1framedtrefoilpairing}
  \end{minipage}
\end{figure}

\begin{figure}[!tbp]
\begin{center}
  \begin{tikzpicture}[scale=.5]
  \draw [decorate,decoration={brace,amplitude=5pt,mirror,raise=4ex}]
  (.9,1.2) -- (5.7,1.2) node[font=\tiny, midway,yshift=-3em]{$2+j+2(j+1)i+j$};
  \draw [decorate,decoration={brace,amplitude=5pt,mirror,raise=4ex}]
  (14.2,1) -- (14.2,6.6) node[font=\tiny, rotate=90, midway,yshift=-3em]{$2+2j$};
  \draw [decorate,decoration={brace,amplitude=5pt,mirror,raise=4ex}]
  (14.2,10) -- (14.2,12) node[font=\tiny, rotate=90, midway,yshift=-3em]{$j$};
  \draw [decorate,decoration={brace,amplitude=1pt,mirror,raise=4ex}]
  (7.9,1.2) -- (8.1,1.2) node[font=\tiny, midway,yshift=-3em]{$1+2(j+1)i$};
  \draw [decorate,decoration={brace,amplitude=5pt,raise=4ex}]
  (.9,15) -- (2.8,15) node[font=\tiny, midway,yshift=3em]{$2+j+2(j+1)i$};
   \draw [decorate,decoration={brace,amplitude=5pt,raise=4ex}]
  (5,15) -- (8,15) node[font=\tiny, midway,yshift=3em]{$j+2(j+1)i+1$};
  \node at (12.5,4) {$\cdots$};
  \node[rotate=45] at (2.8,3.2) {$\cdots$};
  \node[font=\tiny, rotate=45] at (9.5,5.5) {$1+2(j+1)i$};
\node[rotate=90] at (7,9.5) {$\cdots$};
\node[rotate=-45] at (1.25,14.2) {$\cdots$};
\node[rotate =39] at (8.5, 15) {$\cdots$};
  \node[font=\tiny] at (0,1) {$w$};
  \node[font=\tiny] at (15,3.5) {$z$};
\node[anchor=south west,inner sep=0] at (-1,0)    {\includegraphics[width=.6\textwidth]{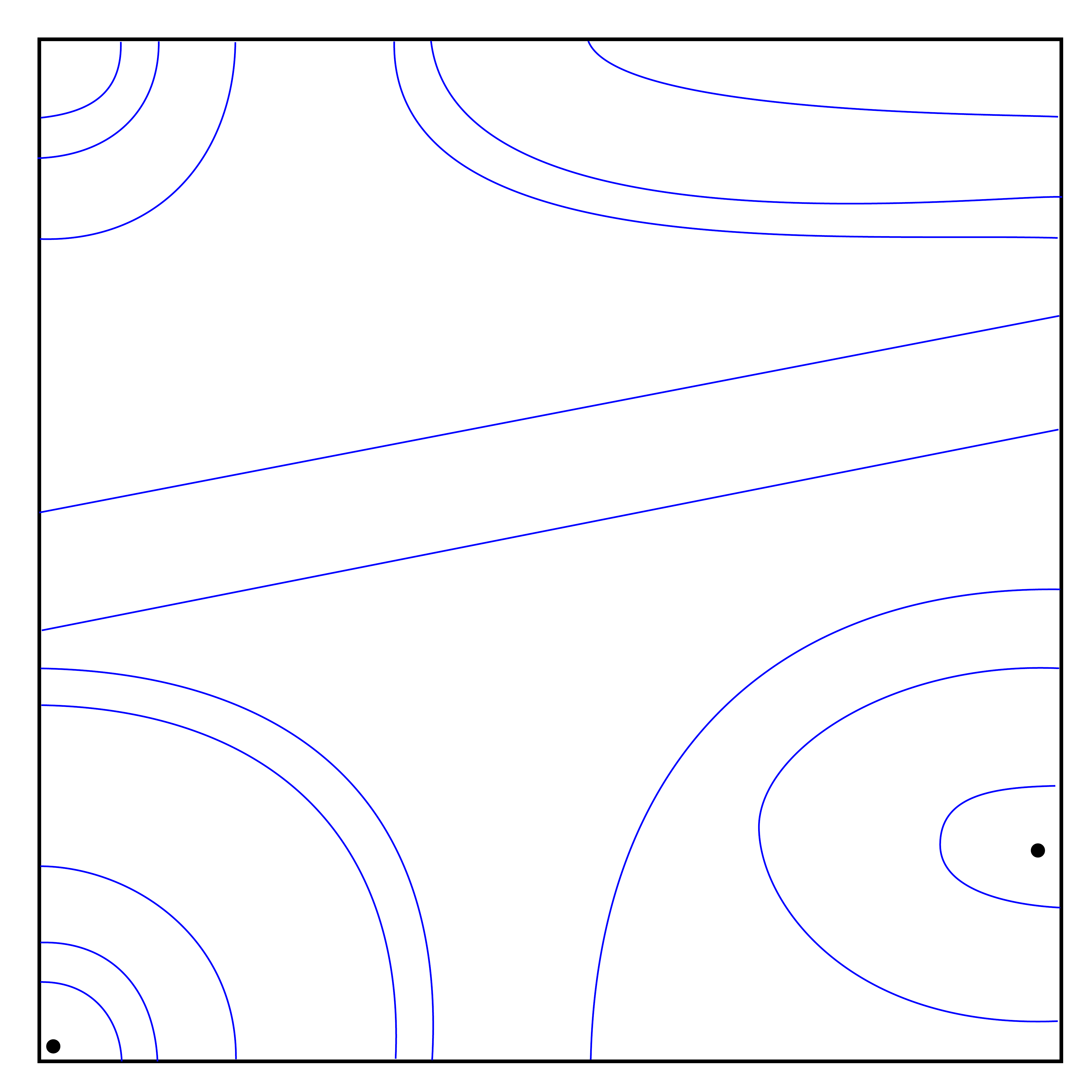}};
\end{tikzpicture}
    \caption{The $(1,1)$ pattern that determines the pattern knot $Q^{i,j}$. Figure \ref{Mazur(0,2)} shows the case $i=0$ and $j=2$.}\label{Mazur(i,j)}
    \end{center}
\end{figure}

\subsection{The Curves $\beta(i,j)$}\label{beta}

\begin{figure}[!tbp]
  \centering
  \begin{tikzpicture}

\node[anchor=south west,inner sep=0] at (0,0)    {\includegraphics[width=\textwidth]{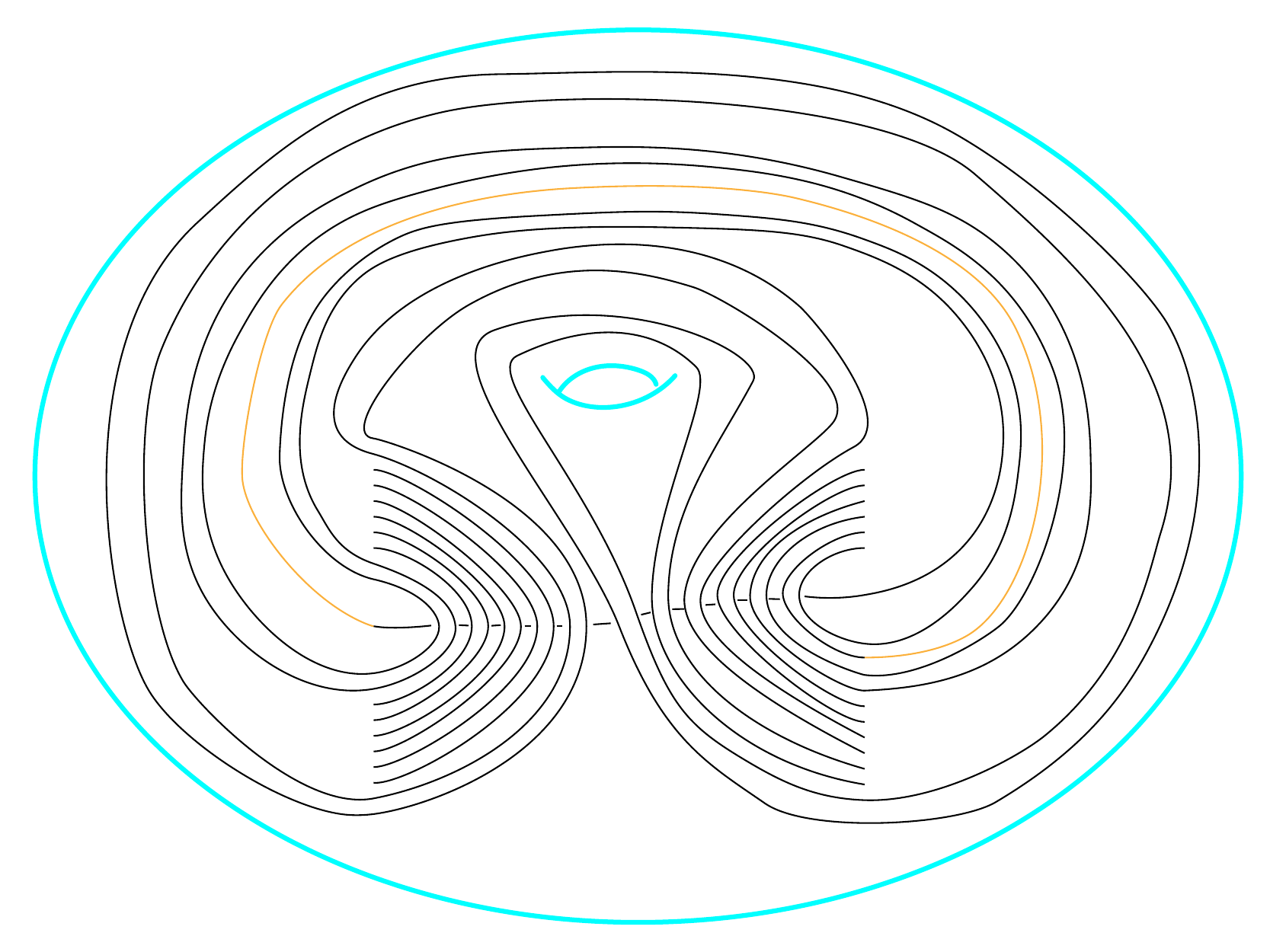}};
\node[font=\tiny, rotate=30] at (10,3.7) {$j+1$};
\draw [decorate,decoration={brace,amplitude=2pt,mirror,raise=4ex}]
  (9,1.85) -- (9,2.05) node[font=\tiny, midway]{};
  \draw [decorate,decoration={brace,amplitude=2pt,mirror,raise=4ex}]
  (9,2.5) -- (9,2.7) node[font=\tiny, midway]{};
  \draw [decorate,decoration={brace,amplitude=2pt,mirror,raise=4ex}]
  (9,2.15) -- (9,2.35) node[font=\tiny, midway, yshift=3em]{};
  
  \node[font=\tiny] at (10.2,1.95) {$j+1$};
  \node[font=\tiny] at (10.2,2.25) {$j+1$};
   \node[font=\tiny] at (10.2,2.6) {$j+1$};
\node[font=\tiny, rotate=-33] at (2.95,1.95) {$j+1$};
\node[font=\tiny, rotate=-33] at (2.95,3.3) {$j+1$};
\node[font=\tiny, rotate=30] at (9,5.4) {$j+1$};
\end{tikzpicture}
\caption{The knot in $S^1 \times D^2$ determined by the $(1,1)$ pattern with $\beta=\beta(i,j)$.}\label{genmazurknot}
\end{figure}

\begin{figure}[!tbp]
  \centering
  \begin{minipage}[b]{0.35\textwidth}
  \begin{tikzpicture}

\node[anchor=south west,inner sep=0] at (0,0)    {\includegraphics[width=1.4\textwidth]{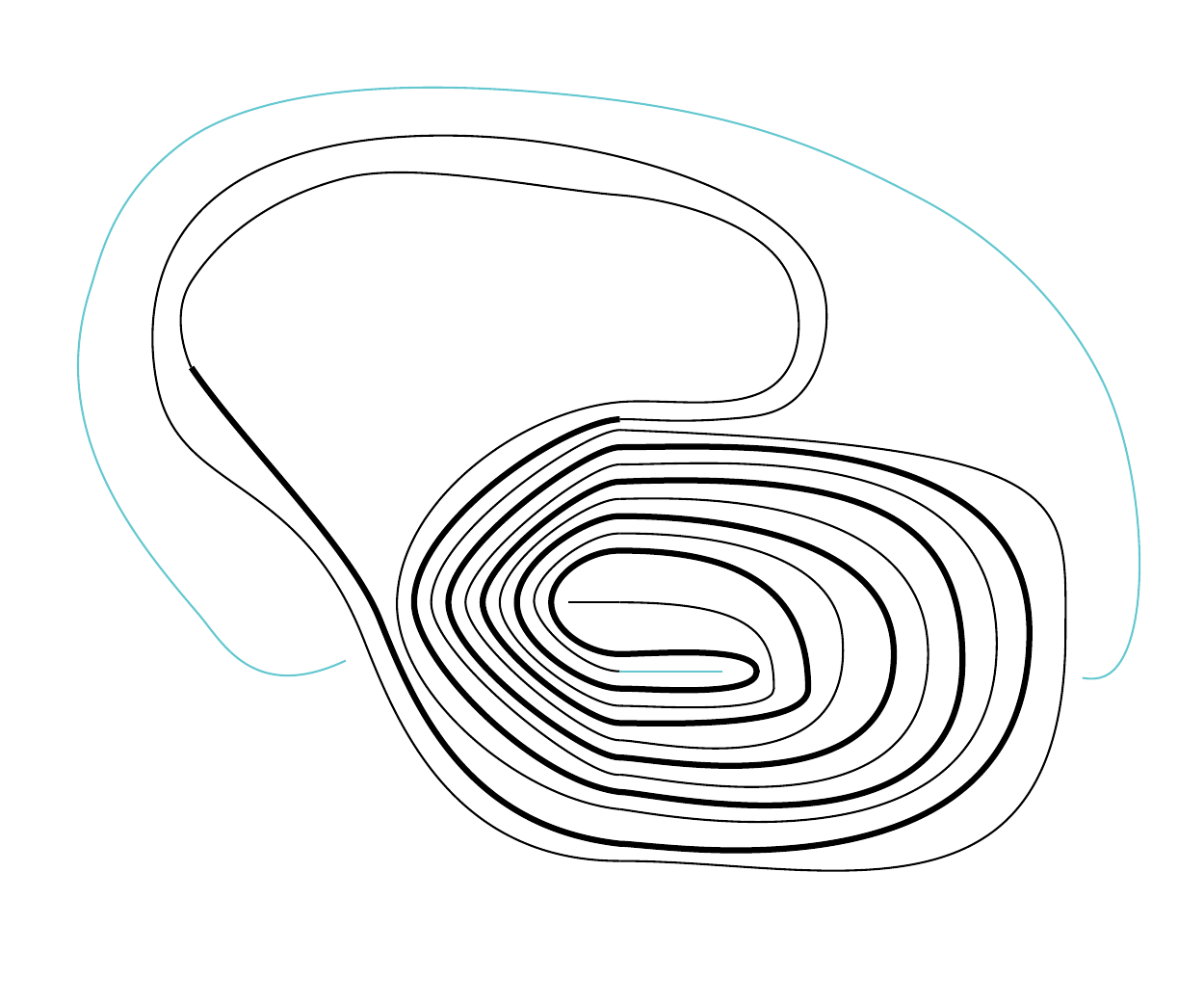}};

\end{tikzpicture}
    \caption{The knot from Figure \ref{genmazurknot} after an isotopy.}\label{isotopy.5}
  \end{minipage}
  \hspace{1in}
  \begin{minipage}[b]{0.35\textwidth}
  \begin{tikzpicture}
\node[anchor=south west,inner sep=0] at (0,0)    {\includegraphics[width=1.4\textwidth]{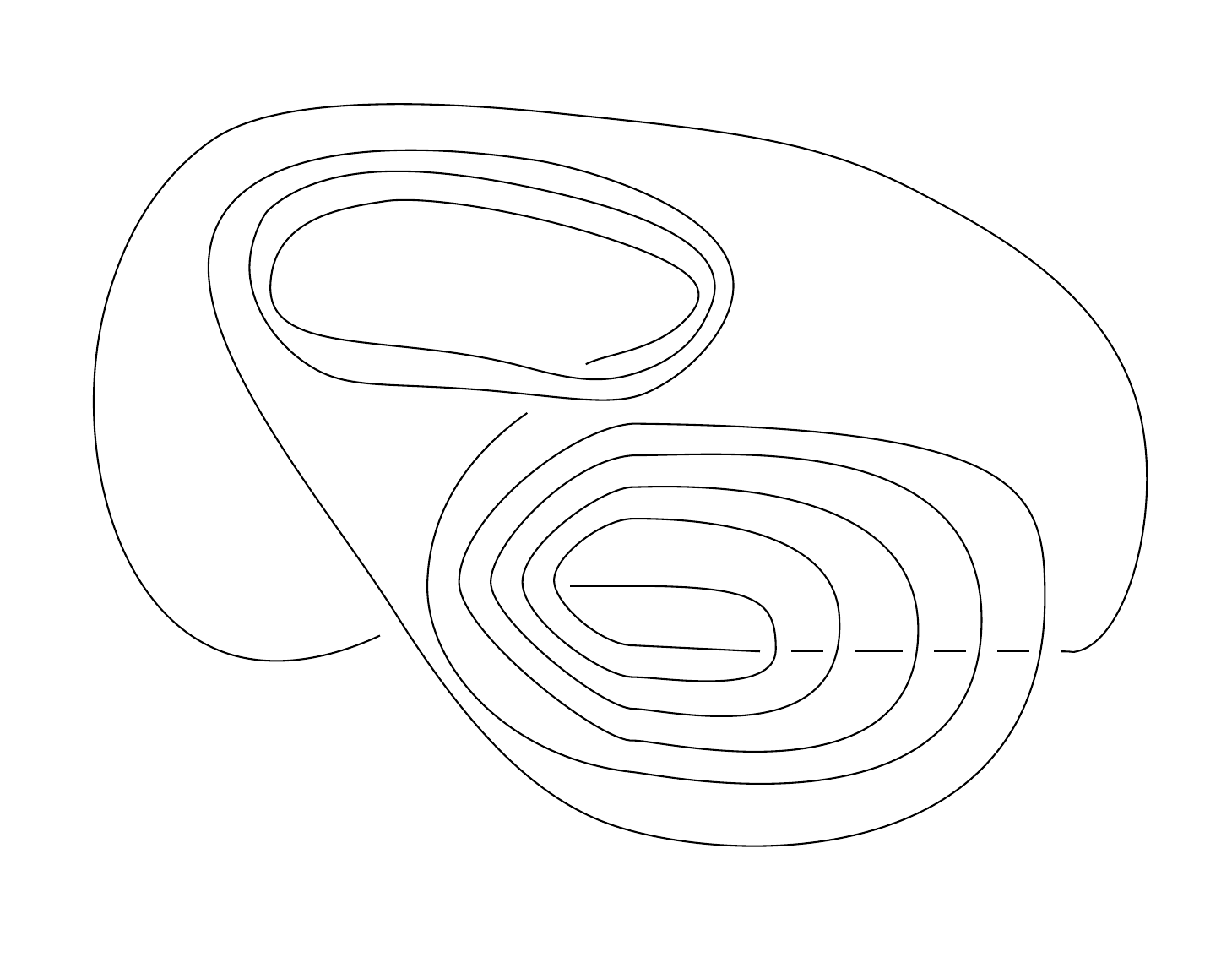}};
\end{tikzpicture}
    \caption{Isotope the $j$ consecutive strands that are bold in Figure \ref{isotopy.5} to obtain this knot, which is $Q^{i,j}_0$.}\label{isotopy1}
  \end{minipage}
\end{figure}

In this section we introduce the specific $(1,1)$-patterns that give rise to the pattern knots $Q^{i,j}$ shown in Figure \ref{Qij}. 

\begin{definition}
Let $\beta(i,j)$ denote the $\beta$ curve for the $(1,1)$ pattern which in the parameterization of \cite{Chen} is given by $(r,s)=(2+j+2(j+1)i,j)$.
\end{definition}

The doubly pointed bordered Heegaard diagram associated with $\beta(i,j)$ is shown in Figure \ref{Mazur(i,j)}, and from that description it is easy to see that knot determined by the $(1,1)$ pattern with $\beta$ curve $\beta(i,j)$ is shown in Figure \ref{genmazurknot}. In that figure there are $r-1=1+j+2i(1+j)=(2i+1)(1+j)$ rainbows and $s+1=j+1$ stripes. Each pair of strands represents $j+1$ parallel strands, as indicated, and there are $2i(j+1)$ of them. If we pull the $(2i+1)(1+j)$ rainbows from the left side of the figure around the orange arc, we end up with Figure \ref{isotopy.5}. In that figure the bold line represents $j$ consecutive strands. We isotope the $j$ strands by pulling i the bold piece of the knot, and end up at Figure \ref{isotopy1}. Here there are $j$ strands winding around the hole of the torus and $2i+1$ rainbows. It is straightforward to verify that this is the knot $Q^{i,j}_0$ shown in Figure \ref{Qij}.

\begin{figure}[!tbp]
\begin{center}
  \begin{tikzpicture}
\node[anchor=south west,inner sep=0] at (0,0)    {\includegraphics[scale=.3]{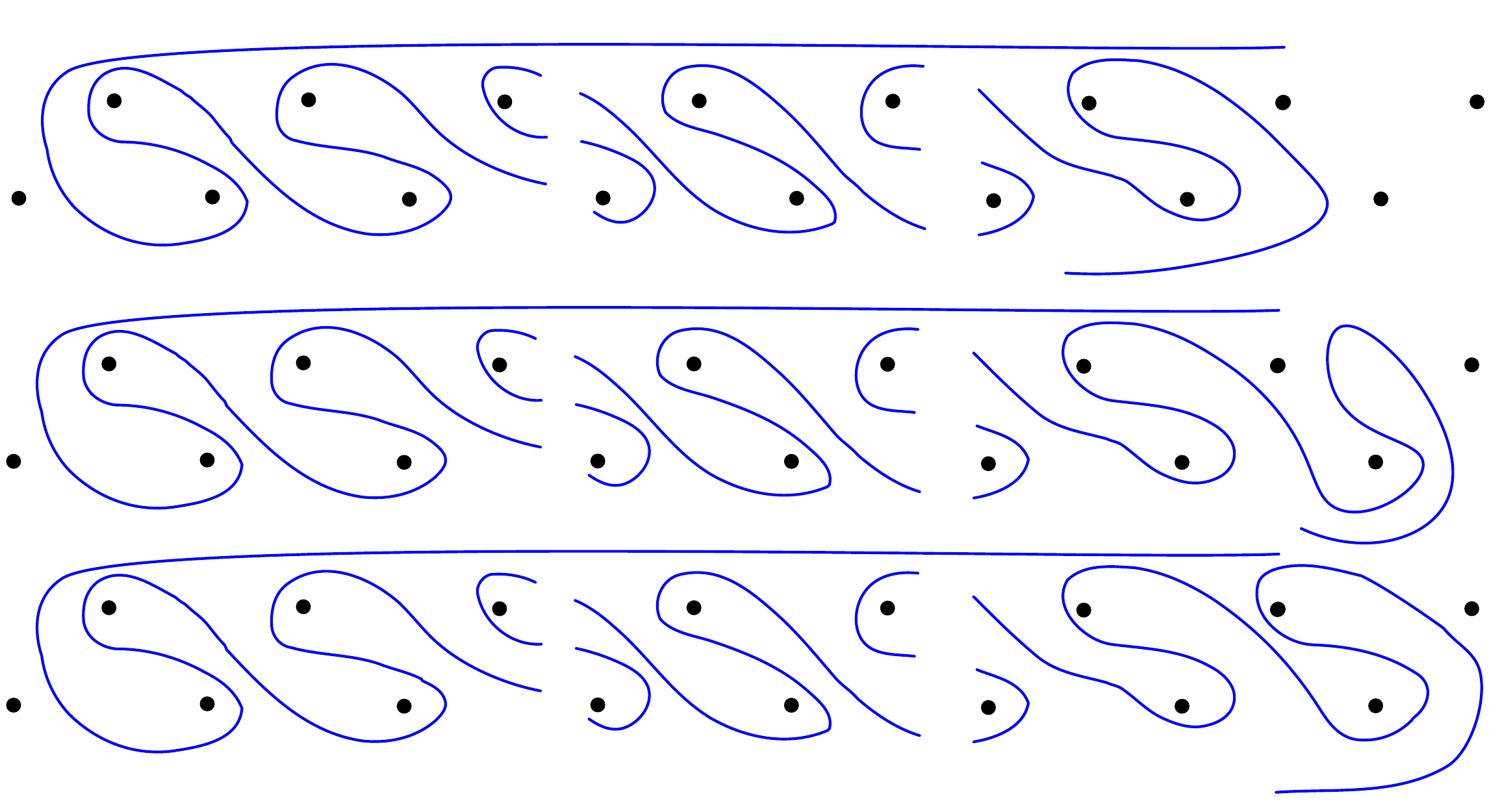}};
\node at (1,.3) {$1$};
\node at (2.6,.3) {$2$};
\node at (5.65,.3) {$\dfrac{j}{2}+1$};
\node at (8.8,.3) {$j$};
\node at (10.3,.3) {$j+1$};
\node[font=\tiny] at (-.1, 4.8) {$w$};
\node[font=\tiny] at (-.1, 2.8) {$w$};
\node[font=\tiny] at (-.1, .8) {$w$};
\node[font=\tiny] at (1.4, 4.8) {$w$};
\node[font=\tiny] at (1.4, 2.8) {$w$};
\node[font=\tiny] at (1.4, .8) {$w$};
\node[font=\tiny] at (3, 4.8) {$w$};
\node[font=\tiny] at (3, 2.8) {$w$};
\node[font=\tiny] at (3, .8) {$w$};
\node[font=\tiny] at (4.5, 4.8) {$w$};
\node[font=\tiny] at (4.5, 2.8) {$w$};
\node[font=\tiny] at (4.5, .8) {$w$};
\node[font=\tiny] at (6, 4.8) {$w$};
\node[font=\tiny] at (6, 2.8) {$w$};
\node[font=\tiny] at (6, .8) {$w$};
\node[font=\tiny] at (7.5, 4.8) {$w$};
\node[font=\tiny] at (7.5, 2.8) {$w$};
\node[font=\tiny] at (7.5,.8) {$w$};
\node[font=\tiny] at (9, 4.8) {$w$};
\node[font=\tiny] at (9, 2.8) {$w$};
\node[font=\tiny] at (9, .8) {$w$};
\node[font=\tiny] at (10.5, 4.8) {$w$};
\node[font=\tiny] at (10.5, 2.8) {$w$};
\node[font=\tiny] at (10.5, .8) {$w$};

\node[font=\tiny] at (1, 1.6) {$z$};
\node[font=\tiny] at (1, 3.5) {$z$};
\node[font=\tiny] at (1, 5.5) {$z$};
\node[font=\tiny] at (2.6, 1.6) {$z$};
\node[font=\tiny] at (2.6, 3.5) {$z$};
\node[font=\tiny] at (2.6, 5.5) {$z$};
\node[font=\tiny] at (4.1, 1.6) {$z$};
\node[font=\tiny] at (4.1, 3.5) {$z$};
\node[font=\tiny] at (4.1, 5.5) {$z$};
\node[font=\tiny] at (5.65, 1.6) {$z$};
\node[font=\tiny] at (5.65, 3.5) {$z$};
\node[font=\tiny] at (5.65, 5.5) {$z$};
\node[font=\tiny] at (7.15, 1.6) {$z$};
\node[font=\tiny] at (7.15, 3.5) {$z$};
\node[font=\tiny] at (7.15, 5.5) {$z$};
\node[font=\tiny] at (8.7, 1.6) {$z$};
\node[font=\tiny] at (8.7, 3.5) {$z$};
\node[font=\tiny] at (8.7, 5.5) {$z$};
\node[font=\tiny] at (10.2, 1.6) {$z$};
\node[font=\tiny] at (10.2, 3.5) {$z$};
\node[font=\tiny] at (10.2, 5.5) {$z$};
\node[font=\tiny] at (11.7, 1.6) {$z$};
\node[font=\tiny] at (11.7, 3.5) {$z$};
\node[font=\tiny] at (11.7, 5.5) {$z$};
\end{tikzpicture}
    \caption{The isotopy that produces $\beta(0,j+1)$ from $\beta(0,j)$.}\label{fingermove}
    \end{center}
\end{figure}

In order to understand the pairing diagram for $\CFKhat(\talpha(K,n),\tbeta(i,j))$ we make some observations about the lifted $\beta$ curve $\tbeta(i,j)$. When $i=0$ the curve $\beta(0,j)$ is determined by the pair $(r,s)=(2+j,j)$. In this case, it is easy to see that the lift $\tbeta(0,j)$ has the form shown in Figure \ref{fingermove} top row. Indeed, each ``wave'' contributes one to the count of rainbows, and there are $j+1$ ``waves'', and there is one extra rainbow at the left end. Said another way, the lifted $\beta$ curve $\tbeta(0,j)$ is obtained from $\tbeta(0,j-1)$ by the finger move shown in Figure \ref{fingermove} and this isotopy introduces one more rainbow and one more stripe to $\tbeta(0,j-1)$.

\begin{figure}[!tbp]
  \centering
  \begin{minipage}[b]{0.3\textwidth}
  \begin{tikzpicture}
  \node[anchor=south west,inner sep=0] at (0,0)    {\includegraphics[width=1.2\textwidth]{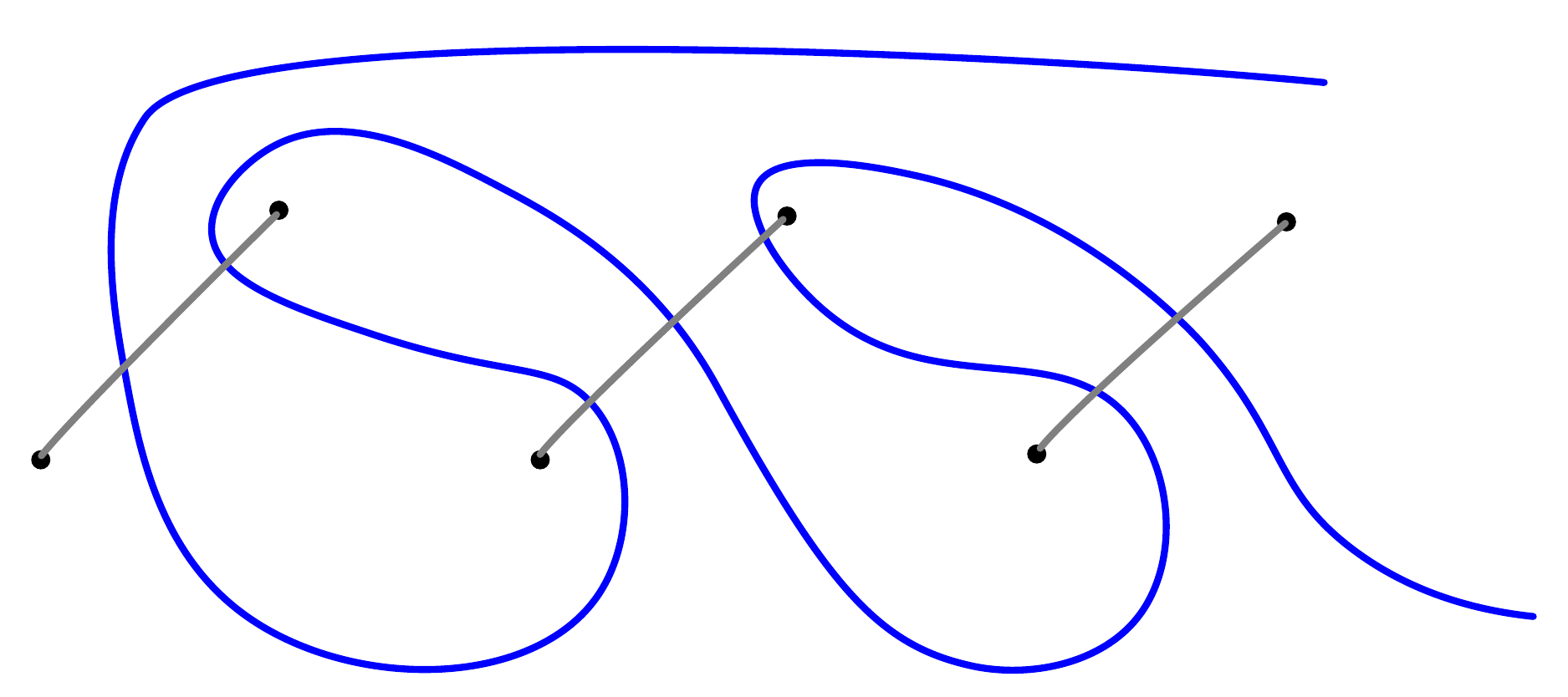}};
  \node[font=\tiny] at (-.1, .8) {$w$};
  \node[font=\tiny] at (1.6, .8) {$w$};
  \node[font=\tiny] at (3.1, .8) {$w$};
   \node[font=\tiny] at (1,1.5) {$z$};
  \node[font=\tiny] at (2.65,1.5) {$z$};
  \node[font=\tiny] at (4.25, 1.5) {$z$};
\node at (5,2) {$\cdots$};
\node at (5.3,.3) {$\cdots$};
\node[font=\tiny] at (1,.3) {$A$};
\node[font=\tiny] at (2,2.2) {$A-1$};
\node[font=\tiny, rotate=-20] at (1.2,1) {$A-1$};
\node[font=\tiny, rotate=-20] at (1.6,1.75) {$A-2$};

\node[font=\tiny, rotate=-50] at (2.5,.8) {$A-1$};
\node[font=\tiny, rotate=-20] at (3,1.2) {$A-2$};
\node[font=\tiny, rotate=-15] at (3,1.75) {$A-3$};
\node[font=\tiny, rotate=-30] at (4.5,.5) {$A-2$};
\end{tikzpicture}
    \caption{The curve $\tbeta(0,j)$ for the knot $Q^{0,j}$.}\label{pretwistbeta}
  \end{minipage}
  \hspace{1in}
  \begin{minipage}[b]{0.3\textwidth}
  \begin{tikzpicture}

\node[anchor=south west,inner sep=0] at (0,0)    {\includegraphics[width=1.4\textwidth]{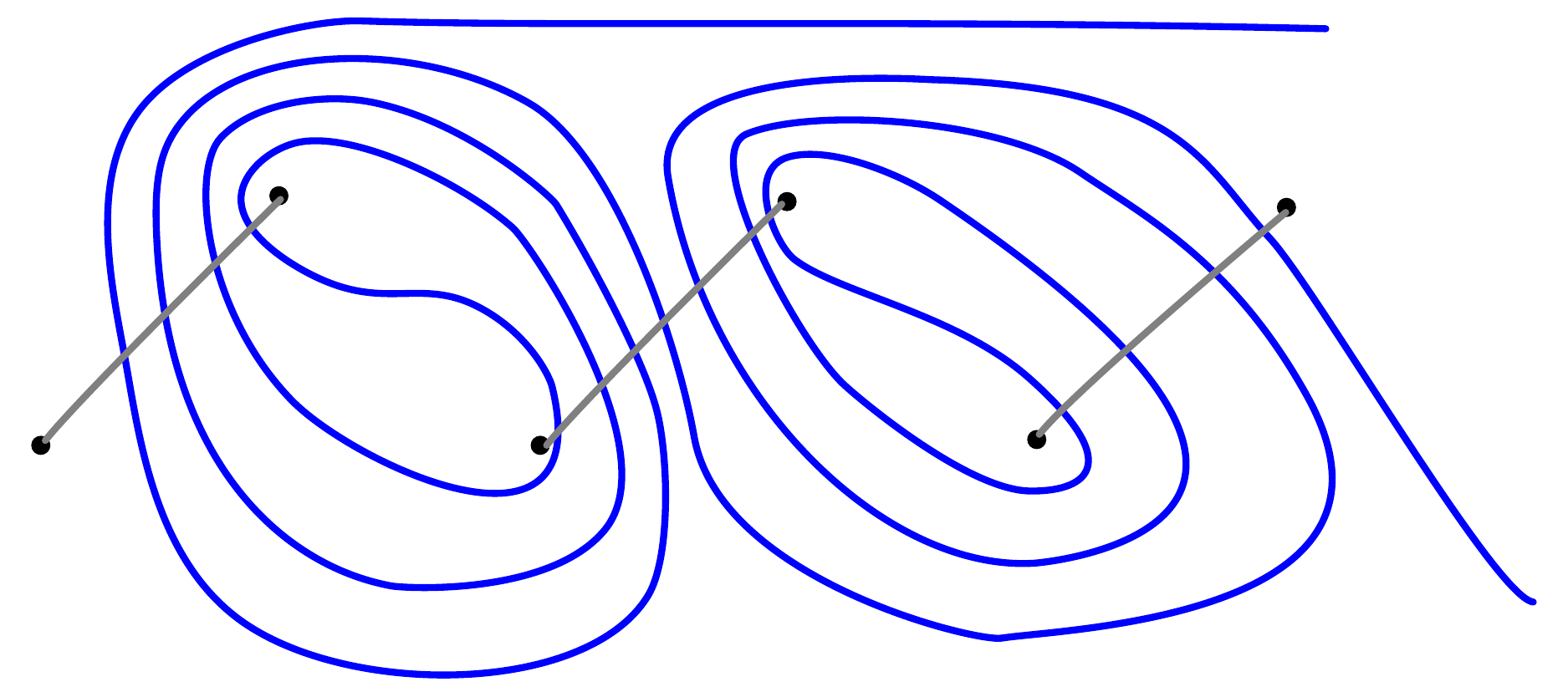}};
\node[font=\tiny] at (.1,1.2) {$\delta_{w,z}$};
\node at (5.3,2.5) {$\cdots$};
\node at (6.1,.4) {$\cdots$};
\node[font=\tiny] at (1,.3) {$A$};
\node[font=\tiny] at (2,2.7) {$A-1$};
\node[font=\tiny] at (1.4,1) {$A$};
\node[font=\tiny] at (1.4,.57) {$A-1$};
\node[font=\tiny] at (1.4,1.58) {$A-1$};
  \node[font=\tiny] at (-.1, .9) {$w$};
  \node[font=\tiny] at (1.75, .9) {$w$};
  \node[font=\tiny] at (3.8, .9) {$w$};
   \node[font=\tiny] at (1.15,1.9) {$z$};
  \node[font=\tiny] at (3.2,1.9) {$z$};
  \node[font=\tiny] at (4.9, 1.9) {$z$};
\end{tikzpicture}
    \caption{twist up the curve $\tbeta(0,j)$ to get the curve $\tbeta(1,j)$ for the knot $Q^{1,j}$.}\label{posttwistbeta}
  \end{minipage}
\end{figure}

\begin{figure}[!tbp]
  \centering
  \begin{minipage}[b]{0.3\textwidth}
  \begin{tikzpicture}
\node[anchor=south west,inner sep=0] at (0,0)    {\includegraphics[width=1.2\textwidth]{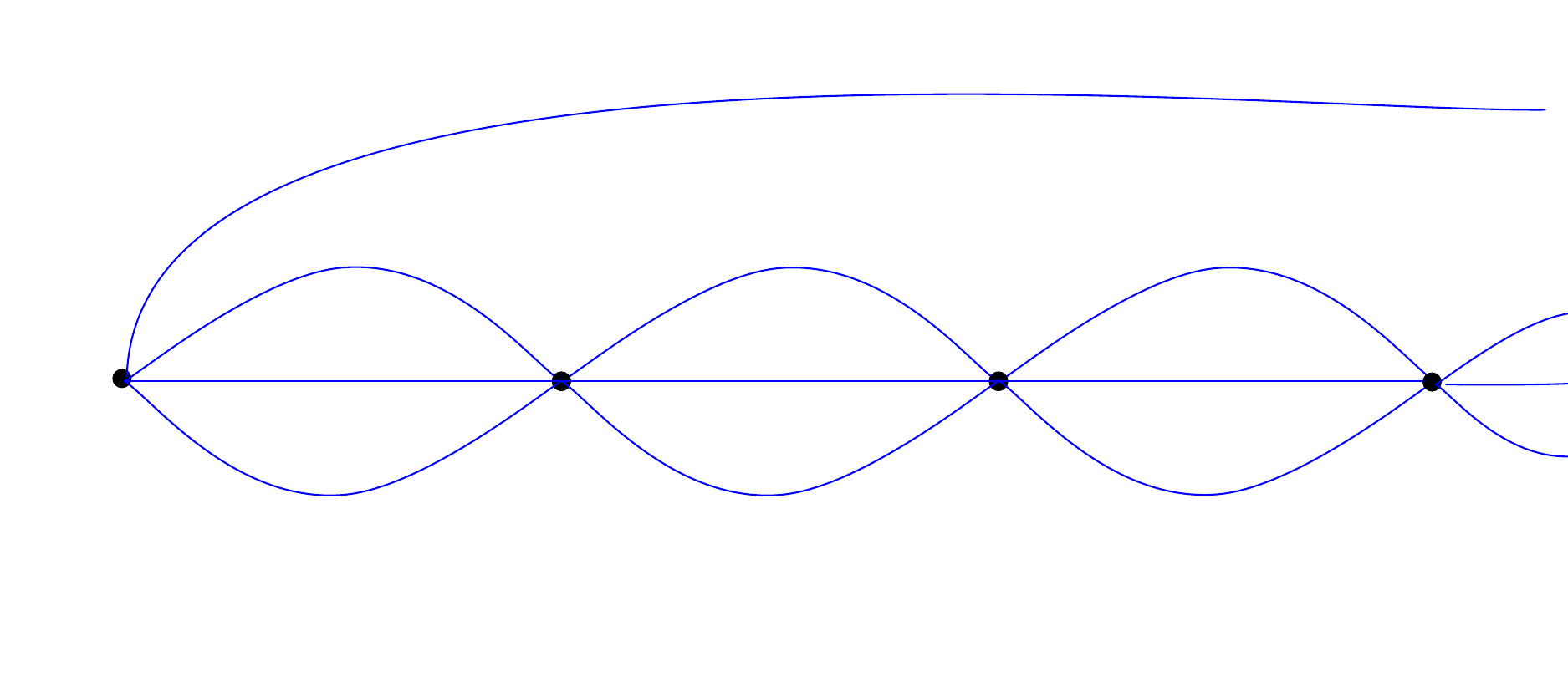}};
\node at (5,2) {$\cdots$};
\node at (5.3,.3) {$\cdots$};
\node[font=\tiny] at (1,.6) {$A$};
\node[font=\tiny] at (2,2.2) {$A-1$};
\node[font=\tiny] at (1.2,1.1) {$A-1$};
\node[font=\tiny] at (1.2,1.45) {$A-2$};
\node[font=\tiny] at (2.5,.6) {$A-1$};
\node[font=\tiny] at (4,.6) {$A-2$};
\node[font=\tiny] at (2.6,1.1) {$A-2$};
\node[font=\tiny] at (2.6,1.46) {$A-3$};
\end{tikzpicture}
    \caption{The collapsed $\tbeta(0,j)$ curve for the knot $Q^{0,j}$.}\label{collapsedbeta}
  \end{minipage}\hspace{1in}
  \begin{minipage}[b]{0.3\textwidth}
  \begin{tikzpicture}
\node[anchor=south west,inner sep=0] at (0,0)    {\includegraphics[width=1.4\textwidth]{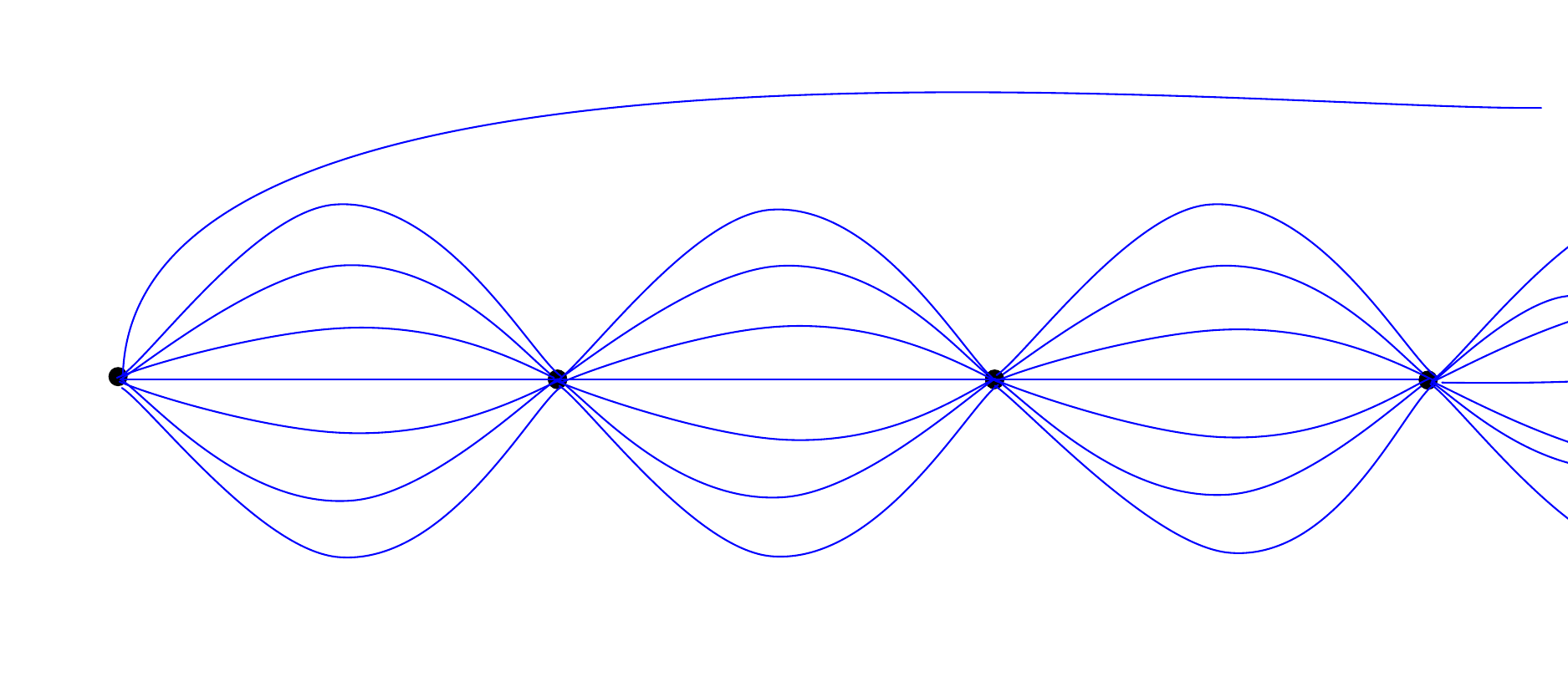}};
\node at (5.3,2.5) {$\cdots$};
\node at (6.1,.4) {$\cdots$};
\node[font=\tiny] at (1.4,1) {$A$};
\node[font=\tiny] at (1.3,.75) {$A-1$};
\node[font=\tiny] at (1.45,1.2) {$A-1$};
\node[font=\tiny] at (1.4, 1.4) {$A-2$};
\node[font=\tiny] at (1.35,1.6) {$A-1$};
\node[font=\tiny] at (1.3,1.8) {$A-2$};
\node[font=\tiny] at (2,2.5) {$A-1$};
\node[font=\tiny] at (1.35,.55) {$A$};
\end{tikzpicture}
    \caption{The collapsed $\tbeta(1,j)$ curve for the knot $Q^{1,j}$.}\label{collapsedtwistedbeta}
  \end{minipage}
\end{figure}

Next we claim that the transition from $\tbeta(0,j)$ to $\tbeta(1,j)$ corresponds to ``twisting up'' each wave, which is shown in Figure \ref{posttwistbeta}. Indeed, here we see that twisting up adds an extra $2$ rainbows for each wave region, and thus $2(j+1)$ new rainbows in total. In general, $\tbeta(i,j)$ is obtained from $\tbeta(0,j)$ by twisting up each wave region $i$ times, and we see that this corresponds to adding $2(j+1)i$ new rainbows, and no new stripes, to the lifted $\beta$ curve.

For convenience we label the arcs of the $\beta$ curves between lifts of the $\delta_{w,z}$ arcs by relative Alexander gradings that an intersection between $\alpha(K,n)$ and $\beta(i,j)$ on that arc would carry if there were intersections on that arc. It is straightforward to see that these Alexander grading labels increase as we move from right to left and bottom to top along the lift $\tbeta(i,j)$. Moreover, from the description of twisting up and \cite[Lemma 4.1]{Chen} the following lemma is immediate (see Figures \ref{pretwistbeta}-\ref{posttwistbeta}).

\begin{lemma}\label{twistupnochange}
For any knot $K$ and for any $i>0$, we have
$$\{A: \HFKhat(S^3,Q^{i,j}_n(K), A) \neq 0\}=\{A:\HFKhat(S^3,Q^{0,j}_n(K), A) \neq 0\}.$$
\end{lemma}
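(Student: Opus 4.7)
The plan is to use Theorem \ref{Pairing} to translate the statement into a claim about intersection Floer homology: both sides are computed as $\CFKhat(\talpha(K,n), \tbeta(\cdot,j))$, and by \cite[Lemma 4.1]{Chen} the Alexander grading of an intersection point equals the label of the arc of the $\beta$ curve on which it sits. Since $\talpha(K,n)$ does not depend on the pattern parameter $i$, the whole dependence on $i$ is encoded in the difference between $\tbeta(i,j)$ and $\tbeta(0,j)$, and it suffices to compare the supports of the two intersection Floer homologies in Alexander grading.

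The first step is to compare the arcs of $\tbeta(i,j)$ and $\tbeta(0,j)$. As described in the discussion preceding the lemma (see Figures \ref{pretwistbeta}--\ref{collapsedtwistedbeta}), $\tbeta(i,j)$ is obtained from $\tbeta(0,j)$ by performing $i$ twist-ups in each wave region. A direct inspection of one twist-up shows that the two new rainbow arcs it introduces carry the same Alexander grading labels as arcs already present in the same wave region, so the set of labels appearing on arcs of $\tbeta(i,j)$ coincides with the set of labels on arcs of $\tbeta(0,j)$. Moreover, any segment of $\talpha(K,n)$ that traverses a twisted-up wave meets each of these new arcs in pairs of intersection points joined by small bigons that avoid both basepoints $z$ and $w$.

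Each such basepoint-free bigon contributes a canceling differential in the $\CFKhat$ complex, so the new intersection points introduced by twisting up cancel pairwise and do not alter the support of the homology in Alexander grading. This gives both inclusions between $\{A : \HFKhat(S^3, Q^{i,j}_n(K), A) \neq 0\}$ and $\{A : \HFKhat(S^3, Q^{0,j}_n(K), A) \neq 0\}$: the reverse inclusion follows because the original intersection points persist unchanged through the twist-ups, and the forward inclusion follows because the only new generators come in these canceling pairs with labels that are already realized. The main obstacle is the rigorous verification that the new intersection points genuinely cancel at the chain level and do not combine with existing differentials to shift the support; I would handle this by induction on $i$, analyzing a single twist-up at a time and checking that the canceling bigons persist under subsequent twist-ups.
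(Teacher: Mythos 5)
Your first step --- observing that twisting up introduces arcs whose Alexander-grading labels coincide with labels already present, so that no new Alexander gradings appear --- is exactly the paper's key observation, and that part is sound. However, the cancellation mechanism you then invoke is not correct, and the conclusion does not actually depend on it.

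You claim that every new intersection point created by a twist-up comes in a pair joined by a bigon disjoint from both basepoints, and that these pairs cancel in $\CFKhat$. This cannot be right: the paper's Lemma \ref{rankintop} computes that
\[
\dim \HFKhat\bigl(S^3, Q^{i,j}_n(T_{2,3}), g\bigr) = (i+1) \quad \text{or} \quad 2(i+1),
\]
so the rank in the top Alexander grading grows linearly in $i$. If the new intersection points created by each twist-up canceled in pairs, this rank would be independent of $i$. Moreover, the paper's pairing convention already assumes the diagram has been reduced so that no basepoint-free bigons remain; after that isotopy the differential on $\CFKhat$ vanishes, $\HFKhat$ is literally the $\F$-span of the intersection points, and there is nothing left to cancel. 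So the cancellation step both contradicts a later lemma and is inconsistent with the reduced setup you should be working in.

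The repair is short and does not need any cancellation. In the reduced pairing diagram, $\HFKhat(S^3, Q^{i,j}_n(K), A) \neq 0$ if and only if some intersection point of $\talpha(K,n)$ and $\tbeta(i,j)$ sits on an arc carrying the label $A$. Twisting up is a local modification of a wave region: every arc of $\tbeta(0,j)$ survives into $\tbeta(i,j)$, and the $2(j+1)i$ new rainbow arcs lie parallel to surviving arcs with the identical Alexander label (compare Figures \ref{pretwistbeta} and \ref{posttwistbeta}). Because they are parallel, a transverse strand of $\talpha(K,n)$ in the reduced diagram crosses a new arc only if it also crosses the adjacent surviving arc carrying the same label, so every new intersection point sits in a grading already occupied by an old one; and conversely every old intersection point persists. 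This gives both inclusions directly, without needing the new points to cancel --- indeed, by Lemma \ref{rankintop} they genuinely contribute to the homology, just in gradings that were already occupied.
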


In order to simplify arguments and pictures in the next section, we introduce a modified version of the lifted $\beta$ curve, called the collapsed $\beta$ curve. 
\begin{definition}
    Let $B(i,j)$ denote the curve $\tbeta(i,j)$ after collapsing the lifts of the arcs $\delta_{w,z}$ to a single point.
\end{definition}

See Figure \ref{collapsedbeta} and \ref{collapsedtwistedbeta} where we draw $B(0,j)$ and $B(1,j)$ together with the Alexander gradings of arcs. The following lemma is immediate. 
\begin{lemma}
  As an $\F$-vector space, $\CFKhat(\talpha(K,n),B(i,j))\cong \CFKhat(\talpha(K,n),\tbeta(i,j))$ and moreover, we can recover the Alexander grading of any intersection point in the collapsed pairing diagram. 
\end{lemma}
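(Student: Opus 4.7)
The plan is to construct an explicit bijection between the intersection points of $\talpha(K,n)\cap \tbeta(i,j)$ and $\talpha(K,n)\cap B(i,j)$, and then to promote the relative Alexander grading from Theorem \ref{Pairing} to a labeling on the arcs of the collapsed curve $B(i,j)$.

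First I would observe that, up to an isotopy of $\talpha(K,n)$ supported in small neighborhoods of the basepoints, all intersections of $\talpha(K,n)$ with $\tbeta(i,j)$ lie on portions of $\tbeta(i,j)$ disjoint from the lifts of the arcs $\delta_{w,z}$. This is visible in the pairing diagrams of Figures \ref{0framedpairing}, \ref{1framedtrefoilpairing}, and \ref{minus1framedtrefoilpairing}, where the intersection points all lie on arcs strictly between consecutive $\delta_{w,z}$ lifts; since any isotopy of $\talpha(K,n)$ that does not cross a basepoint preserves the intersection Floer chain complex, this rearrangement is harmless. The collapse map $q\colon \tbeta(i,j)\to B(i,j)$ identifying each $\delta_{w,z}$ lift to a single point is then a homeomorphism away from those arcs, and so induces a bijection
$$\talpha(K,n)\cap\tbeta(i,j)\;\longleftrightarrow\;\talpha(K,n)\cap B(i,j),$$
which yields the claimed isomorphism of $\F$-vector spaces.

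To recover the Alexander grading from the collapsed diagram, I would argue as follows. For any two intersection points $x,y$ lying on the same arc of $B(i,j)$ between consecutive collapsed points, the corresponding connecting arc $\ell_{x,y}$ on $\tbeta(i,j)$ is disjoint from every $\delta_{w,z}$ lift; so by Theorem \ref{Pairing}, $A(y)-A(x)=\ell_{x,y}\cdot \delta_{w,z}=0$. Hence the Alexander grading is constant along each arc of $B(i,j)$, and crossing a collapsed point shifts the grading by $\pm 1$ according to the intersection sign of $\tbeta(i,j)$ with the relevant $\delta_{w,z}$ lift. Labeling each arc of $B(i,j)$ with its common relative Alexander grading, as already indicated in Figures \ref{collapsedbeta} and \ref{collapsedtwistedbeta}, and then pinning down absolute gradings using the symmetry $\HFKhat(S^3,K,A)\cong \HFKhat(S^3,K,-A)$, gives the absolute Alexander grading of every intersection point directly from $B(i,j)$.

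The only step that requires any care is verifying that $\talpha(K,n)$ can indeed be isotoped off the $\delta_{w,z}$ arcs without affecting the pairing, but this follows from the reducedness assumption on the intersection and the fact that such an isotopy only slides $\talpha$ across Whitney disks containing no basepoints.
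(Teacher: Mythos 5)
The paper offers no proof of this lemma---it is stated as ``immediate''---so there is nothing to compare line-by-line against, but your argument is correct in substance and fills in what the paper treats as obvious: the collapse map is a homeomorphism away from finitely many points of $\tbeta(i,j)$, all intersection points of $\talpha(K,n)$ with $\tbeta(i,j)$ avoid those points, hence the generators biject, and the relative Alexander grading is constant on each arc of $B(i,j)$ by the formula $A(y)-A(x)=\ell_{x,y}\cdot\delta_{w,z}$ from Theorem~\ref{Pairing}, with the absolute grading pinned down by the $A\leftrightarrow -A$ symmetry.

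One small point worth tightening. In the first and last paragraphs you frame the claim that intersection points of $\talpha(K,n)$ with $\tbeta(i,j)$ miss the $\delta_{w,z}$ lifts as an isotopy argument, and in the final paragraph you invoke the reducedness assumption and the absence of basepoints in Whitney disks. That is not the right mechanism: what you need is simply that three one-dimensional objects in a surface (here $\talpha$, $\tbeta$, and a $\delta_{w,z}$ lift) generically have no common point, so after a generic perturbation no $\talpha\cap\tbeta$ point lies on a $\delta_{w,z}$ lift. Reducedness concerns which bigons contribute to the differential, and since the lemma is only an $\F$-vector space statement the differential plays no role here at all. The appeal to reducedness and to isotopies across basepoint-free disks is therefore a non sequitur; replacing it by the transversality remark would make the proof cleaner without changing anything essential.
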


Although twisting up does not change the set of Alexander gradings labelling arcs of the $\beta$ curves by Lemma \ref{twistupnochange}, twisting up does change the number of arcs of the collapsed $\beta$ curve that are labelled with a fixed Alexander grading. We will return to this observation in section \ref{fiberedsection} (see Lemma \ref{rankintop}).

\section{Three-Dimensional Invariants}

In this section we compute the genus of the patterns $Q^{i,j}_n$, determine the set of triples $(i,j,n)$ so that the pattern $Q^{i,j}_n$ is fibered in the solid torus, and show that whenever $K$ is a non-trivial companion the satellite knots $Q^{i,j}_n(K)$ are not Floer thin.

\subsection{Three-Genus and $n$-twisted Satellites}\label{Three-genus}

In this section we use Theorem \ref{Pairing} and the collapsed pairing diagram for $n$-framed satellite knots to prove Theorems \ref{genusnontrivial} and \ref{genusunknotsatellite} from the introduction. Recall that our strategy is to determine $g(Q^{i,j}_n(T_{2,3}))$ directly from the pairing diagram and deduce the forumla for a general non-trivial companion from Equation \ref{schuberteq}. An immediate Corollary of Theorem \ref{genusnontrivial} is a computation of the genus of the $n$-twisted pattern knot $Q^{i,j}_n$ in $S^1 \times D^2$.

\begin{corollary}
    For $n \in \Z$, $i \in \Z_{\geq 0}$ and $j \in \Z_{>0}$, the pattern knot $Q^{i,j}_n$ in $S^1\times D^2$ has genus $$g(Q^{i,j}_n)=\dfrac{j(j+1)}{2}|n|+\begin{cases} 1 & n \geq 0\\ 1-j & n <0\,.\end{cases}$$
\end{corollary}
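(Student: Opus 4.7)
The plan is to read off $g(Q^{i,j}_n)$ directly from Theorem \ref{genusnontrivial} by subtracting the contribution of the companion via Schubert's formula. Since the formula in Theorem \ref{genusnontrivial} holds for every non-trivial companion $K$, we can specialize to a convenient choice (e.g.\ $K = T_{2,3}$) or simply treat $g(K)$ as a free parameter and isolate $g(Q^{i,j}_n)$.

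Concretely, I would fix any non-trivial companion $K$, so that Equation \ref{schuberteq} applies and gives
\begin{equation*}
g(Q^{i,j}_n(K)) \;=\; |w(Q^{i,j}_n)|\, g(K) + g(Q^{i,j}_n) \;=\; j\, g(K) + g(Q^{i,j}_n),
\end{equation*}
using that the winding number of $Q^{i,j}_n$ is $j$ (this is visible from Figure \ref{Qij}, since neither the $i$-twist box nor the $n$-twist box changes the algebraic intersection with a meridional disk). Solving for $g(Q^{i,j}_n)$ and substituting the two cases from Theorem \ref{genusnontrivial} yields
\begin{equation*}
g(Q^{i,j}_n) \;=\; g(Q^{i,j}_n(K)) - j\,g(K) \;=\; \begin{cases} \dfrac{j(j+1)}{2}\,n + 1 & n \geq 0,\\[4pt] \dfrac{j(j+1)}{2}|n| + (1-j) & n < 0,\end{cases}
\end{equation*}
which is exactly the claimed formula.

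The only mild subtlety is that Schubert's formula requires a non-trivial companion, but this is automatic here since we are free to pick any non-trivial $K$ to perform the subtraction. I do not expect any real obstacle: the theorem does all the work, and the corollary is essentially a restatement after peeling off the $j\, g(K)$ term.
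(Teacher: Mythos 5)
Your proof is correct and is essentially the same as the paper's: both invoke Schubert's formula (Equation~\ref{schuberteq}) with a non-trivial companion such as $T_{2,3}$ to peel off the $j\,g(K)$ term from Theorem~\ref{genusnontrivial} and isolate $g(Q^{i,j}_n)$.
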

\begin{proof}
    Equation \ref{schuberteq} shows that $$g(Q^{i,j}_n)=g(Q^{i,j}_n(T_{2,3}))-jg(T_{2,3})=g(Q^{i,j}_n(T_{2,3}))-j.$$\qedhere
\end{proof}

To prove Theorems \ref{genusnontrivial} and \ref{genusunknotsatellite}, we will make use of the collapsed pairing diagram. Note first that since $g(K)=\max\{A: \HFKhat(S^3,K,A)\neq0\}$, Lemma \ref{twistupnochange} implies that $g(Q^{0,j}_n(K))=g(Q^{i,j}_n(K))$, so it is enough to consider the case $i=0$. 

In Figures \ref{genuspositivejodd}-\ref{genuspositiveeven}, we see half of the lifted pairing diagram $\CFKhat(\talpha(T_{2,3},n),\tbeta(0,j))$. The other half is determined by the symmetry of the pairing diagram coming from the symmetry of knot Floer homology. We work with the collapsed pairing diagram to simplify the pictures, since we are not interested in any of the differentials and only in the Alexander gradings in this section. Note that by \cite[Lemma 6.3]{Chen}, the Alexander grading of intersection points of $\alpha(T_{2,3})$ and $\beta(0,j)$ increase by $-w(Q^{0,j}_n)=j$ as we go up one row in the pairing diagram, so to determine the largest Alexander grading of an intersection point in the pairing diagram, it is enough to determine the number of rows between the central intersection point $c$ (with $A(c)=0$) and the top of the pairing diagram.

\begin{figure}[!tbp]
\begin{center}
  \begin{tikzpicture}

\node[anchor=south west,inner sep=0] at (0,0)    {\includegraphics[scale=.4]{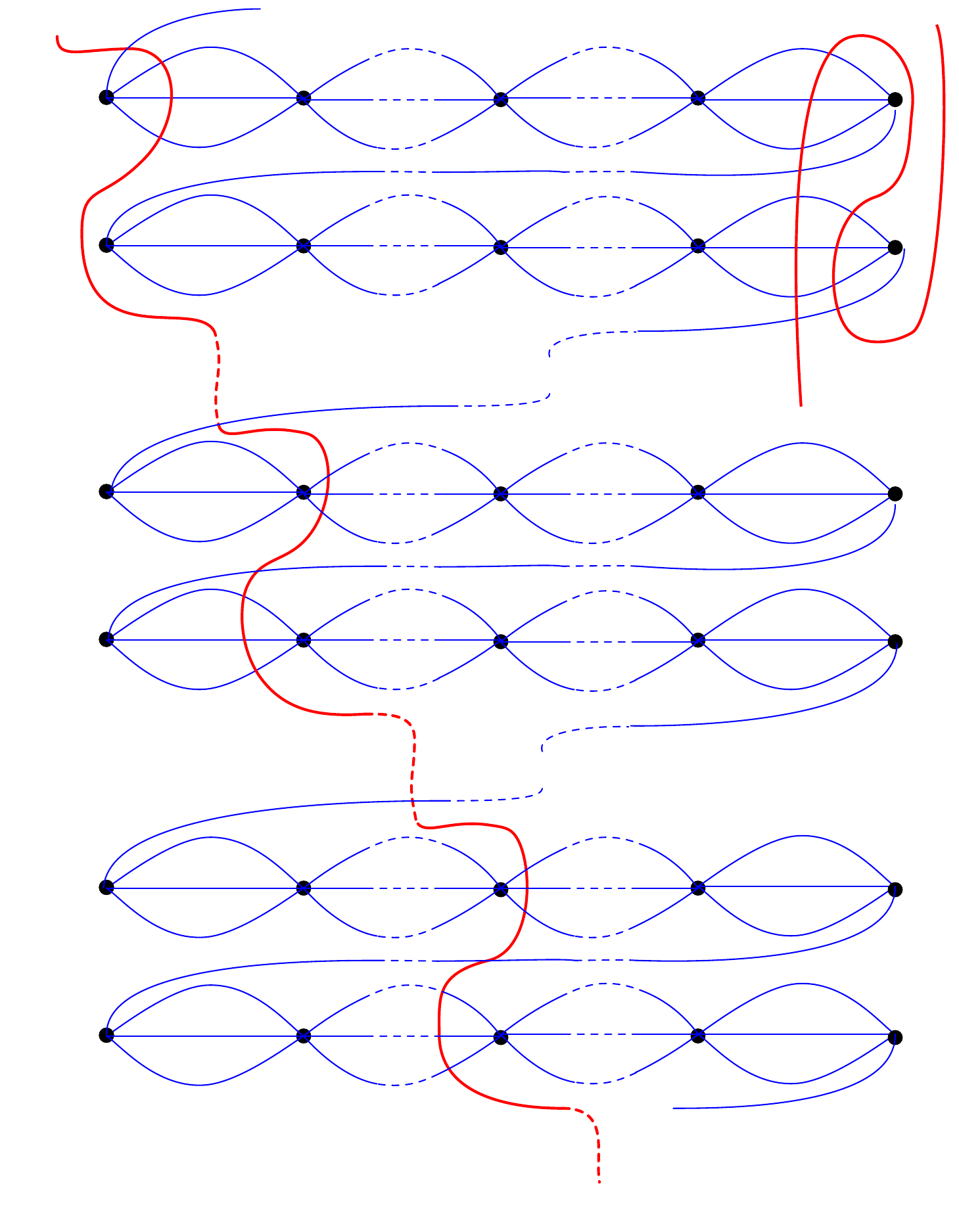}};
\node at (5,2.8) {$\tikzcirc{2pt}$};
\node at (1.6,11.3) {$\tikzcirc{2pt}$};
\node at (1.3,12.2) {$\tikzcirc{2pt}$};
\node at (8.25,11.2) {$\tikzcirc{2pt}$};
\node[font=\tiny] at (8,11.3) {$a'$};
\node[font=\tiny] at (10.5,12) {$\alpha(K,-n)$};
\node[font=\tiny] at (5,3) {$c$};
\node[font=\tiny] at (1.8,11.4) {$a$};
\node[font=\tiny] at (1.3,12.4) {$b$};
\node[font=\tiny] at (0,12.4) {$\alpha(K,n)$};
\node[font=\tiny] at (4.5,2.9) {$0$};
\node[font=\tiny] at (4.5,4.6) {$j$};
\node[font=\tiny] at (8.5,4.2) {$-1$};
\node[font=\tiny] at (8.5,3.6) {$0$};
\node[font=\tiny] at (8.5,3.1) {$1$};

\node[font=\tiny] at (6.9,4) {$0$};
\node[font=\tiny] at (6.9,3.6) {$1$};
\node[font=\tiny] at (6.9,3.2) {$2$};

\node[font=\tiny] at (1.75,3.9) {$j-1$};
\node[font=\tiny] at (1.75,3.6) {$j$};
\node[font=\tiny] at (1.75,3.15) {$j+1$};

\node[font=\tiny] at (3.5,4) {$j-2$};
\node[font=\tiny] at (3.5,3.5) {$j-1$};
q\node[font=\tiny] at (3.5,3.15) {$j$};

\node[font=\tiny] at (4.5,8.85) {$j+\dfrac{j(j-1)}{2}n$};

\draw (9.5,2.8) to (9.5,4.1);
\draw (9.4,2.8) to (9.6,2.8);
\draw (9.4,4.1) to (9.6,4.1);
\draw (9.5,4.1) to (9.5,8.2);
\draw (9.4,8.2) to (9.6,8.2);
\draw (.5,8.2) to (.5,12.2);
\draw (.4,8.2) to (.6,8.2);
\draw (.4,12.2) to (.6,12.2);
\node[rotate=90] at (.3,10) {$n$};
\node[rotate=90] at (10,6) {$\left(\dfrac{j-1}{2}\right)n$};
\node[rotate=90] at (9.8,3.5) {$1$};
\end{tikzpicture}
    \caption{The pairing diagram for $Q^{0,j}_n$ when $j$ is odd and $n>0$.}\label{genuspositivejodd}
    \end{center}
\end{figure}

\begin{proof}[Proof of Theorem \ref{genusnontrivial}]

As mentioned, by Lemma \ref{twistupnochange}, it is enough to determine the genus in the case $i=0$, and by Equation \ref{schuberteq} it is enough to compute $g(Q^{0,j}_n(T_{2,3}))$. To this end, consider first the case $n\geq 0$. It is easy to see that the intersection point labelled $a$ in Figures \ref{genuspositivejodd}-\ref{genuspositiveeven} has the largest Alexander grading. Indeed, the Alexander gradings increase by $j$ for each row we go up in the pairing diagram and they also increase on each strand by one for each rwo we go over. To determine $A(a)$, note that there are a total of $2(j+2)+(n-2)(j+1)$ lifts of the curve $\beta(0,j)$ needed to account for all the intersections between $\alpha(T_{2,3},n)$ and $\beta(0,j)$. Indeed there are $(j+2)$ lifts of the $\CFK^{\infty}(T_{2,3})$ region (which occupies $2$ rows) and there are $(j+1)$ lifts of the unstable region, which spans $n-2$ rows. There are then three cases to distinguish. If $j$ is odd, then there are an even number of rows and the central intersection point occurs between these rows. Moreover, since there are an odd number of $\CFK^{\infty}(T_{2,3})$ regions, by symmetry of the pairing diagram the central intersection point occurs in the middle of the central $\CFK^{\infty}$ region of the curve. See Figure \ref{genuspositivejodd}. If $j$ is even, then there are an odd number of lifts of the unstable region and so the central intersection point occurs somewhere along the unstable region. If $n$ is even or odd, then the number of rows is either even or odd. If $n$ is even, we are in the situation pictured in Figure \ref{genuspositiveeven} and if $n$ is odd, we are in the situation pictured in Figure \ref{genuspositiveodd}. In any case, to determine $A(a)$ it is enough to count the number of rows between the central intersection point $c$ and the intersection point labelled $b$. In all the cases the number of rows are indicated in the figure. We describe the case $j$ even and $n$ odd in detail and leave the rest to the reader. 
\begin{figure}[!tbp]
\begin{center}
  \begin{tikzpicture}
  \node at (4.3,2.15) {$\tikzcirc{2pt}$};
  \node at (.9,10.6) {$\tikzcirc{2pt}$};
  \node at (4.3,10.9) {$\tikzcirc{2pt}$};
  \node at (4.3,3.65) {$\tikzcirc{2pt}$};
  \node at (4.3,6.68) {$\tikzcirc{2pt}$};
  \draw (.2,2.15) to (.2,3.8);
  \draw (.1,2.15) to (.3,2.15);
  \draw(.1,3.8) to (.3,3.8);
  \draw(.1,6.8) to (.3,6.8);
  \draw (.2,3.8) to (.2,6.8);
  \draw (8.5,6.8) to (8.5,9.75);
  \draw (8.5,9.75) to (8.5,10.9);
  \draw (8.4,6.8) to (8.6,6.8);
  \draw (8.4,9.75) to (8.6,9.75);
  \draw (8.4,10.9) to (8.6,10.9);
\node[anchor=south west,inner sep=0] at (0,0)    {\includegraphics[scale=.3]{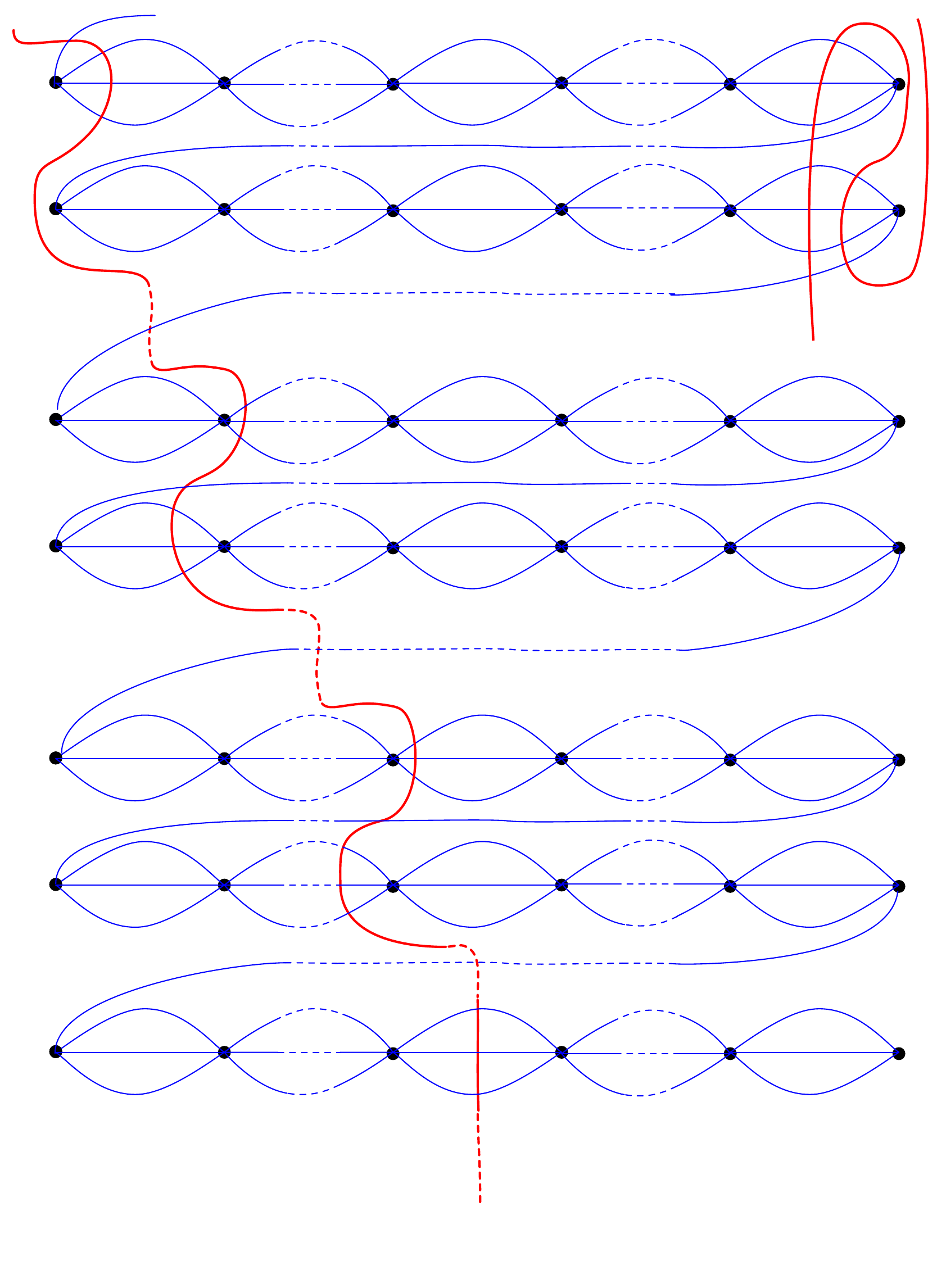}};
\node[font=\tiny] at (0,11.5) {$\alpha(K,n)$};
\node[font=\tiny] at (9,11.5) {$\alpha(K,-n)$};
\node[rotate=90] at (-.2,3) {$\dfrac{n-1}{2}$};
\node[rotate=90] at (-.2,5) {$\dfrac{j-2}{2}n$};
\node[rotate=90] at (8.8,8.5) {$n$};
\node[rotate=90] at (8.8,10.4) {$1$};
\node[font=\tiny] at (4.5, 2.3) {$c$};
\node[font=\tiny] at (1.1, 10.7) {$a$};
\node[font=\tiny] at (4.3,10.7) {$b$};
\node[font=\tiny] at (4.5,3.5) {$d$};
\node[font=\tiny] at (4.5,6.5) {$e$};
\node at (7.36,10.55) {$\tikzcirc{2pt}$};
\node[font=\tiny] at (7.16,10.65) {$a'$};
\end{tikzpicture}
    \caption{The general pairing diagram for $j$ even and $n$ odd.}\label{genuspositiveodd}
    \end{center}
\end{figure}

\begin{figure}[!tbp]
\begin{center}
  \begin{tikzpicture}
\node[font=\tiny] at (0,12.2) {$\alpha(K,n)$};
\node[font=\tiny] at (9.7,12.2) {$\alpha(K,-n)$};
\node[font=\tiny] at (.85,11.2) {$a$};
\node[font=\tiny] at (7.95,11.22) {$\tikzcirc{2pt}$};
\node[font=\tiny] at (7.8,11.4) {$a'$};
  \node at (4.7,1.7) {$\tikzcirc{2pt}$};
  \node at (1.05,11.3) {$\tikzcirc{2pt}$};
  \node at (.76,12.05) {$\tikzcirc{2pt}$};
\node[anchor=south west,inner sep=0] at (0,0)    {\includegraphics[scale=.32]{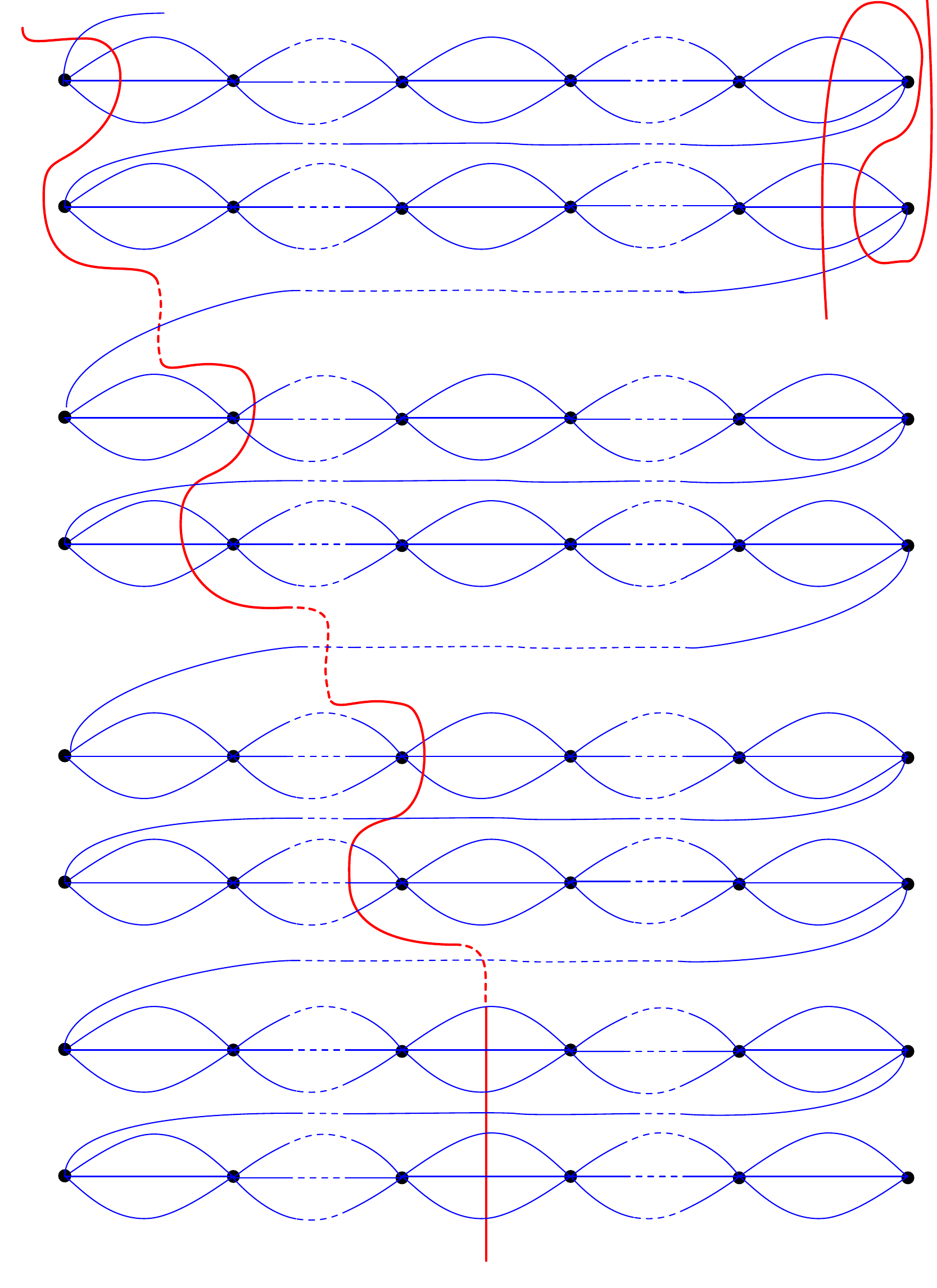}};
\draw (9,1.6) to (9,3.5);
\draw (8.9, 1.6) to (9.1,1.6);
\draw (8.9,3.5) to (9.1,3.5);
\draw (8.9,6.8) to (9.1,6.8);
\draw (9,3.5) to (9,6.8);
\draw (.4,6.8) to (.4,9.9);
\draw (.3,6.8) to (.5,6.8);
\draw (.3,9.9) to (.5,9.9);
\node[rotate=90] at (9.5,2.3) {$\dfrac{n-2}{2}$};
\node[rotate=90] at (9.5,5) {$\left(\dfrac{j-2}{2}\right)n$};
\node[rotate=90] at (.2, 8.5) {$n$};
\end{tikzpicture}
    \caption{The general pairing diagram for $j$ even and $n$ even.}\label{genuspositiveeven}
    \end{center}
\end{figure}
In Figure \ref{genuspositiveodd} the central intersection point $c$ occurs on the central lift of the unstable chain. There are $\dfrac{n-1}{2}$ rows between that intersection point and the intersection point labelled $d$ in Figure \ref{genuspositiveodd}. Then, there are $\dfrac{j-2}{2}n$ rows between the interesction point $d$ and the intersection point $e$, and $n+1$ rows between $e$ and $b$. Therefore,
$$A(b)-A(c)=j\left(\dfrac{n-1}{2}+\dfrac{j-2}{2}n+n+1\right).$$ Then it is straightforward to verify that $A(a)-A(b)=\dfrac{j}{2}+1$. Simplifying, we see that when $n\geq 0$

$$g(Q^{i,j}_n(T_{2,3}))=A(a)=j+\dfrac{j(j+1)}{2}n+1$$ This finished the proof in the case $n \geq 0$. When $n<0$, Figures \ref{genuspositivejodd}-\ref{genuspositiveeven} show both the curve $\alpha(K,n)$ and the curve $\alpha(K,-n)$ and we can see that the difference $g(Q^{i,j}_{-n}(T_{2,3}))-g(Q^{i,j}_{n}(T_{2,3}))=j$. For example in Figure \ref{genuspositiveodd}, the intersection point with the largest Alexander grading in the pairing with $\alpha(K,-n)$ is labelled $a'$: $A(a')=g(Q^{0,j}_{-n}(K))$. Then $A(a)-A(a')=\ell_{a',a}\cdot\delta_{w,z}=j$. Therefore when $n<0$, we have $g(Q^{i,j}_{n}(T_{2,3}))=j+\dfrac{j(j+1)}{2}|n|+1-j$\qedhere

\end{proof}

\begin{proof}[Proof of Theorem \ref{genusunknotsatellite}]
The computation of the genus when the companion knot is the unknot is similar to the proof of Theorem \ref{genusnontrivial} and left to the reader (see Case $0$ in the proof of Theorem \ref{tau} for the relevant pairing diagram).
\end{proof}

\subsection{Fiberedness}\label{fiberedsection}

In this section we prove Theorem \ref{fiberedness}. By \cite{HMS} a necessary condition for a satellite to be fibered is for the companion to be fibered and to determine the fiberedness of any satellite $Q^{i,j}_n(K)$ with fibered companion, it is enough to determine if the satellite knot $Q^{i,j}_n(T_{2,3})$ is fibered. 

Now, recall that a knot $K$ in $S^3$ is fibered if and only if $\HFKhat(S^3,K,g(K))$ has rank one \cites{Nifibered, Juhaszfibered}. In the previous section we determined that largest Alexander grading, so the genus, of any satellite knot with pattern $Q^{i,j}_n$ and companion $K$. In this section, we will determine the rank of the knot Floer homology of $Q^{i,j}_n(T_{2,3})$ in Alexander grading $g(Q^{i,j}_n(T_{2,3}))$. Using this, we will show when this has rank one. In the following, let $g=g(Q^{i,j}_n(T_{2,3}))$

\begin{lemma}\label{rankintop}
$$\dim(\HFKhat(S^3,Q^{i,j}_n(T_{2,3}),g))=\begin{cases} 2(i+1) & \text{if}\,\, n=0\,\, \text{and}\,\, j \geq 1 \\ 2(i+1) & \text{if}\,\, n=-1\,\, \text{and}\,\, j=1 \\ (i+1) & \text{else} \end{cases}$$
\end{lemma}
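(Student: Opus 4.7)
My plan is as follows. By Theorem \ref{Pairing}, $\HFKhat(S^3, Q^{i,j}_n(T_{2,3})) \cong \CFKhat(\talpha(T_{2,3}, n), B(i,j))$ as $\F$-vector spaces with the Alexander grading preserved, so I need only count the intersection points of $\talpha(T_{2,3}, n)$ with $B(i,j)$ that sit at Alexander grading $g = g(Q^{i,j}_n(T_{2,3}))$. By Lemma \ref{twistupnochange} the set of realized Alexander gradings is independent of $i$, so $g$ is always achieved; the task is to compute its multiplicity.

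First, I would settle the base case $i = 0$ by direct inspection of the pairing diagrams in Figures \ref{genuspositivejodd}--\ref{genuspositiveeven} (and their analogues for $n \le 0$). In each generic $(n,j)$ the top-graded intersection $a$ identified in the proof of Theorem \ref{genusnontrivial} is the unique point of $\talpha(T_{2,3},n) \cap B(0,j)$ carrying grading $g$. In the two distinguished cases $(n = 0,\, j \geq 1)$ and $(n = -1,\, j = 1)$, however, the essential component of $\alpha(T_{2,3}, n)$---whose slope $2\tau(T_{2,3}) - n = 2 - n$ and turning behavior are controlled by Lemmas \ref{curveshape1}--\ref{curveshape2}---is aligned with the top ``wave'' of $B(0,j)$ in exactly the right way to produce one extra intersection at grading $g$, for a rank-$2$ top.

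Next, I would analyze the effect of increasing $i$ using the twisting-up description of Section \ref{beta} (Figures \ref{pretwistbeta}--\ref{posttwistbeta}). Passing from $B(0,j)$ to $B(i,j)$ introduces additional arcs at every already-realized Alexander grading inside each wave region, and the question becomes how many new arcs at grading $g$ are crossed by $\talpha(T_{2,3},n)$. I would argue from the local picture of a single twist that in the generic case exactly one additional intersection at grading $g$ is produced per twist, giving $i+1$ in total; while in each of the two special cases the extra flat/turning segment of the essential component meets both new arcs of grading $g$ per twist, producing two additional intersections per twist and yielding $2(i+1)$ in total.

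The main obstacle will be this last step: verifying that the per-twist contribution at grading $g$ is precisely $1$ in the generic case and precisely $2$ in each special case, rather than $0$ or something larger. This requires carefully tracking, in the base configurations listed above, how the essential component of $\alpha(T_{2,3},n)$ threads the top rainbow structure of the twisted-up $B(i,j)$, and checking that no far-away portion of $\talpha(T_{2,3},n)$ (e.g., lower rows, or the tails of the unstable chain) accidentally contributes another intersection at the maximal Alexander grading. The coincidence between the turning geometry of $\alpha(T_{2,3},n)$ and the top arcs of $B(i,j)$ is what draws the boundary between the generic formula $i+1$ and the special formula $2(i+1)$, and this is where I expect the most careful bookkeeping will be needed.
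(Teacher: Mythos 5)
Your plan follows the paper's approach essentially verbatim: establish the $i=0$ base count ($1$ generically, $2$ when $n=0$ or when $n=-1,\,j=1$) by direct inspection of the pairing diagrams, then observe that each twist (increasing $i$) adds one new top-grading intersection for each one already present, giving $(i+1)$ or $2(i+1)$. The per-twist bookkeeping you flag as the main obstacle is exactly what the paper resolves by the local pictures in Figures \ref{posttwistbeta}, \ref{beta(2,j)pairing} and \ref{beta3jpairing}, so no additional idea is needed.
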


    \begin{figure}[!tbp]
  \centering
  \begin{minipage}[b]{0.3\textwidth}
  \begin{tikzpicture}
\node[anchor=south west,inner sep=0] at (0,0)    {\includegraphics[width=1.2\textwidth]{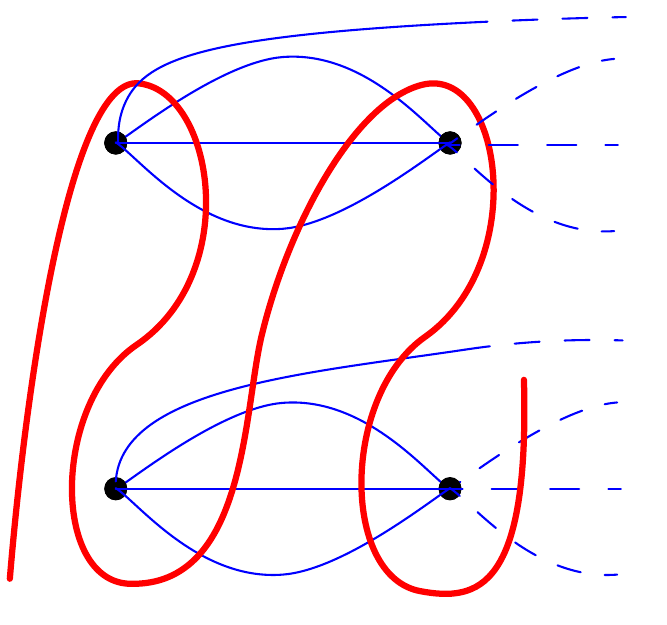}};
\node at (1.55,3.1) {$\tikzcirc{2pt}$};
\node[font=\tiny] at (2,3.1) {$g$};
\node[font=\tiny] at (2,3.7) {$g-1$};
\node[font=\tiny] at (2.3,4.3) {$g-2$};
\node[font=\tiny] at (2.5,2.1) {$0$};
\node[font=\tiny] at (1.2,3) {$a_1$};
\node[font=\tiny] at (2.6,3) {$a_2$};
   \node at (2.25,3) {$\tikzcirc{2pt}$};
\end{tikzpicture}
    \caption{The lifted pairing diagram for $\HFKhat(Q^{0,j}_0(T_{2,3})).$}\label{fibered0}
  \end{minipage}
  \hspace{1in}
  \begin{minipage}[b]{0.3\textwidth}
    \begin{tikzpicture}
    \node[font=\tiny] at (3.3,3.7) {$a$};
    
\node[anchor=south west,inner sep=0] at (0,0)    {\includegraphics[width=1\textwidth]{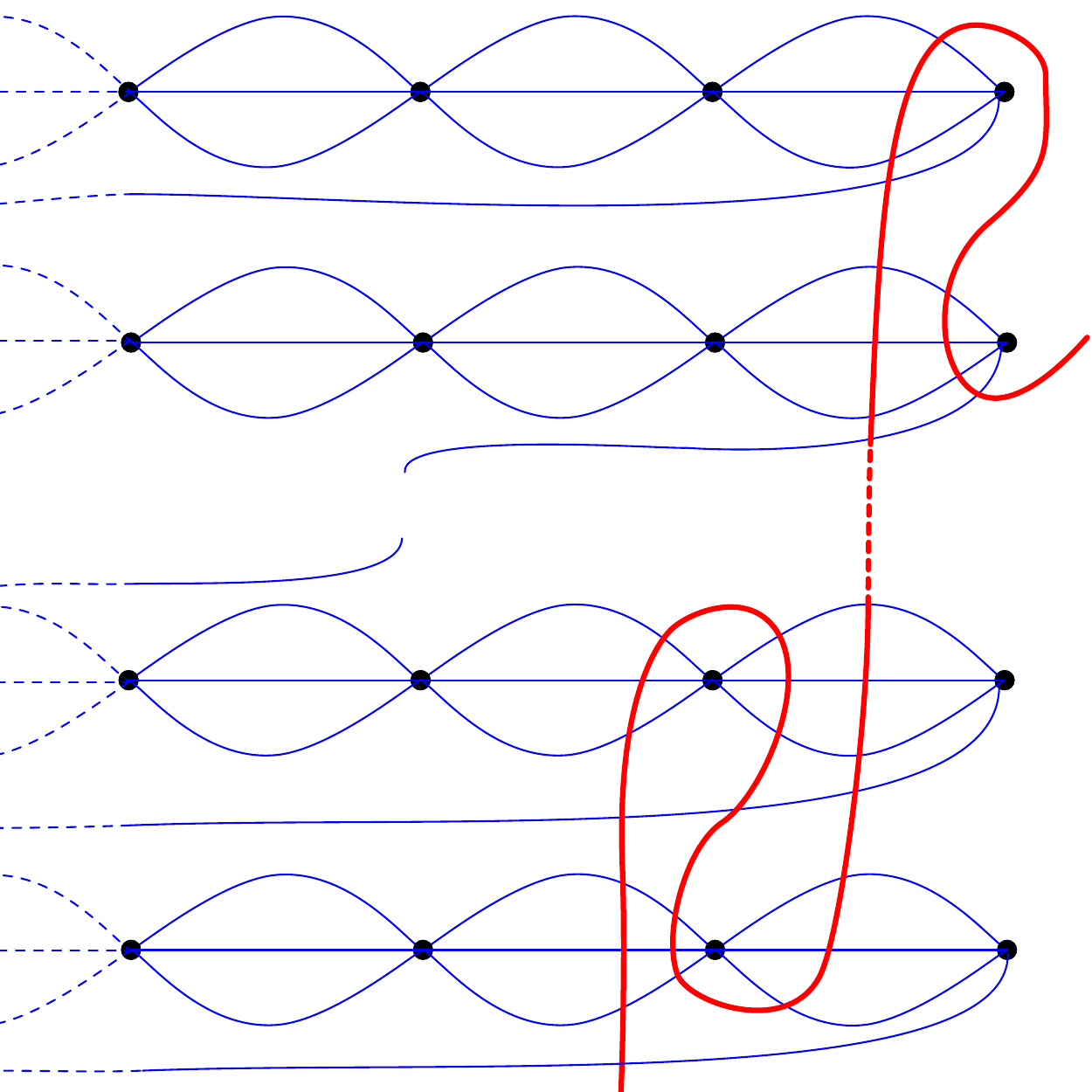}};
 \node[font=\tiny] at (3.4,3.6) {$\tikzcirc{1.5pt}$};
\end{tikzpicture}
    \caption{The pairing diagram for $Q_n^{0,j}(T_{2,3})$ when $n<-1$. The Alexander grading labels of the $\beta$ arcs are as in Figure \ref{fiberednegative}.}\label{fiberednegnlessneg2}
  \end{minipage}
\end{figure}
    
\begin{figure}[!tbp]
  \begin{tikzpicture}
\node[anchor=south west,inner sep=0] at (0,0)    {\includegraphics[width=1\textwidth]{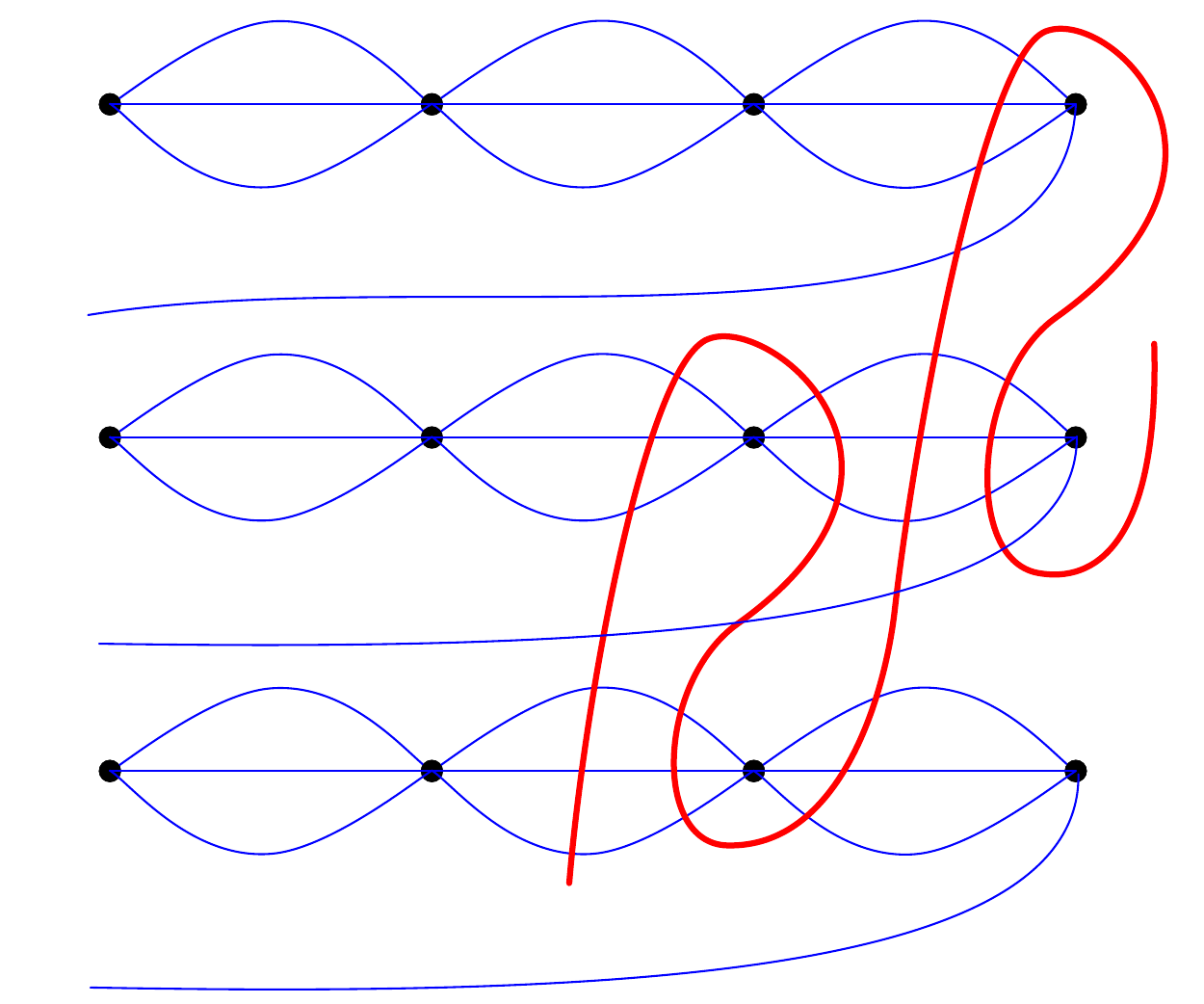}};
\node at (11.35,9.7) {$\tikzcirc{2pt}$};
\node at (7.3,5.7) {$\tikzcirc{2pt}$};
\node at (7.45,5.55) {$b$};
\node at (11.1,9.9) {$a$};
\node at (10.6,5.65) {$g-j$};
\node at (10.6,7.55) {$g-2-j$};
\node at (10.6,6.6) {$g-1-j$};
\node at (10.6,1.9) {$g-2j$};
\node at (10.6,3.7) {$g-2-2j$};
\node at (10.6,2.8) {$g-1-2j$};
\node at (7,8.4) {$g-1$};
\node at (10.6,9.6) {$g$};
\node at (10.6,11.5) {$g-2$};
\node at (10.6,10.55) {$g-1$};
\node[font=\small] at (6.6,5.85) {$g-j+1$};
\node at (6.7,7.6) {$g-1-j$};
\node at (6.7,6.65) {$g-j$};
\node at (6.6,3.7) {$g-1-2j$};
\node at (6.7,2.8) {$g-2j$};
\node at (6.7,1.9) {$g-2j+1$};
\node at (6,4.5) {$g-1-j$};
\end{tikzpicture}
    \caption{The general pairing diagram showing intersection points with largest possible Alexander grading when $i=0$ and $n=-1$. For each increase in $i$, there is one more arc in the top right with Alexander grading label $g$, and one more arc in the second to top row and second to right column with label $g-1+j$.}\label{fiberednegative}
\end{figure}

\begin{proof}
 We will first determine the rank of $\HFKhat(S^3,Q^{0,j}_n(T_{2,3}),g)$ then we will see how the rank changes when we increase $i$ by twisting up the $\beta$ curve. 
    
    Suppose first that $n=0$, In this case there are two intersection points in the top row of the pairing diagram that contribute to $\HFKhat(S^3,Q^{0,j}_0(T_{2,3}),g)$, shown in Figure \ref{fibered0} and labelled $a_1$ and $a_2$. Direct inspection shows that there are no other intersection points in this Alexander grading. Therefore $\dim(\HFKhat(S^3,Q^{0,j}_0(T_{2,3})))=2$.

    Suppose now that $n<0$. We first deal with the case $n<-1$. The pairing diagram for this case is shown in Figure \ref{fiberednegnlessneg2}. In that figure we see that there is one intersection point with Alexander grading $g(Q^{0,j}_n(T_{2,3}))$ labelled $a$ in the top row of that figure. Inspection of the pairing diagram shows that all the intersection points in the lower rows of the pairing diagram all carry Alexander gradings $<g$ regardless of the value of $j$. Hence $\dim(\HFKhat(S^3,Q^{0,j}_n(T_{2,3}),g))=1$ when $n<-1$ and $j\geq 1$. 
    
    The pairing diagram for the case $n=-1$ is shown in \ref{fiberednegative}. In that figure, we see that there is one intersection point in Alexander grading $g$ in the top row of the pairing diagram, labelled $a$. All other arcs of the $\beta$ curve in this row (and thus all other intersection points in this row) carry an Alexander grading label $<g$. Consider the next to top row of the pairing diagram. The largest possible Alexander grading is the Alexander grading of the intersection point labelled $b$, which is $g-j+1$. This is always strictly less than $g$ unless $j=1$. Further, regardless of the value of $j$, all other intersection points carry an Alexander grading $\leq g-1$. So in the case that $n=-1$, we see that $\dim(\HFKhat(S^3,Q^{0,1}_{-1}(T_{2,3}),g))=2$ and $\dim(\HFKhat(S^3,Q^{0,j}_{-1}(T_{2,3}),g))=1$ when $j>1$.

    The case that $n\geq 1$ is similar. In that case we see that $\rk(\HFKhat(S^3,Q^{0,j}_n(T_{2,3}),g))=1$ for all $j \geq 1$ and $n \geq 1$. 
    
\begin{figure}[!tbp]
  \centering
  \begin{minipage}[b]{0.3\textwidth}
  \begin{tikzpicture}
\node[anchor=south west,inner sep=0] at (0,0)    {\includegraphics[width=1.2\textwidth]{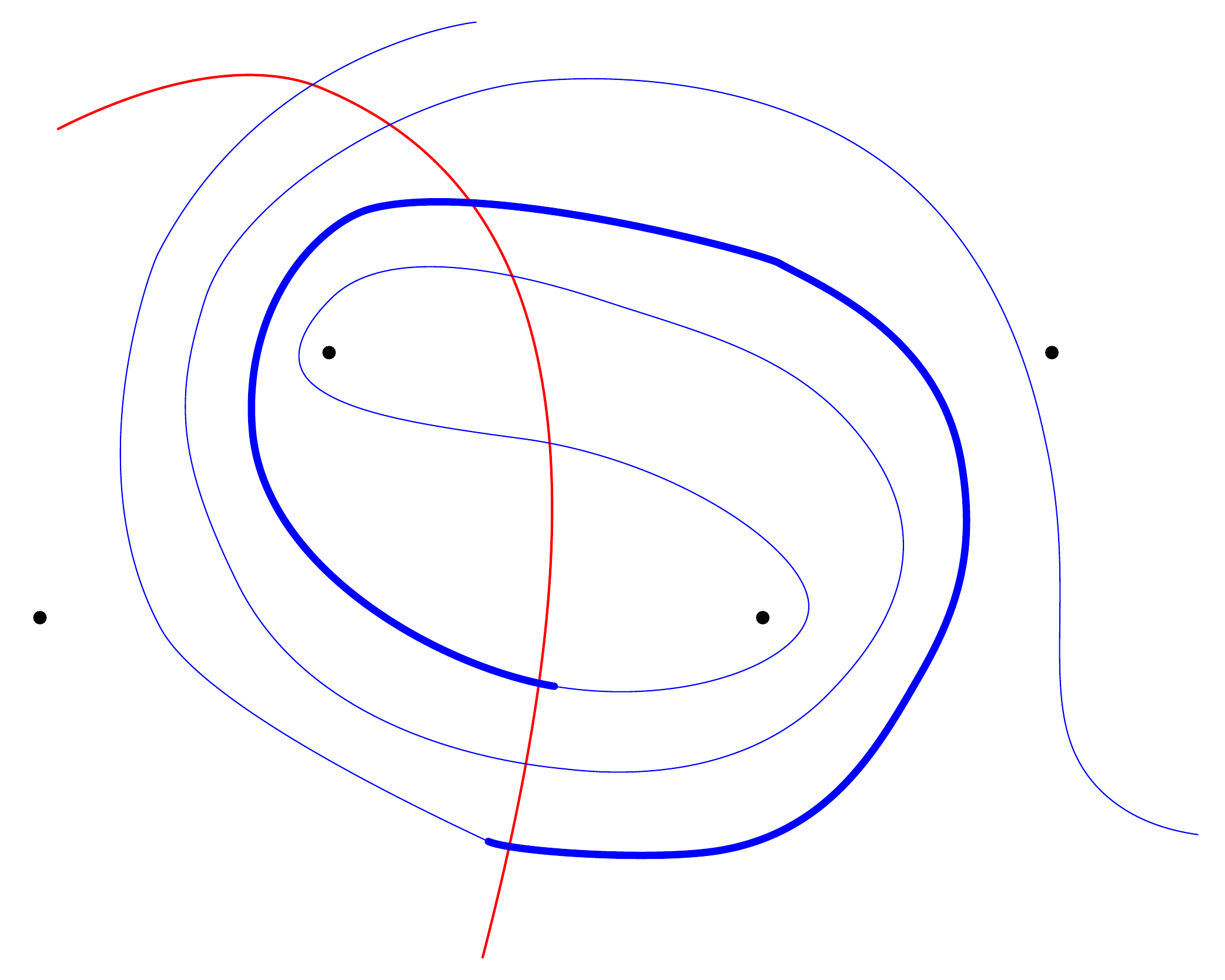}};
\node at (2.13,.55) {$\tikzcirc{2pt}$};
\node at (2.3,1.2) {$\tikzcirc{2pt}$};
\end{tikzpicture}
    \caption{The top left of the pairing diagram when $n>0$ and $i=2$. The intersection points connected by a spiral are in Alexander grading $g=g(Q^{i,j}_n(K)).$}\label{beta(2,j)pairing}
  \end{minipage}\hspace{1in}
  \begin{minipage}[b]{0.3\textwidth}
  \begin{tikzpicture}
\node[anchor=south west,inner sep=0] at (0,0)    {\includegraphics[width=1.2\textwidth]{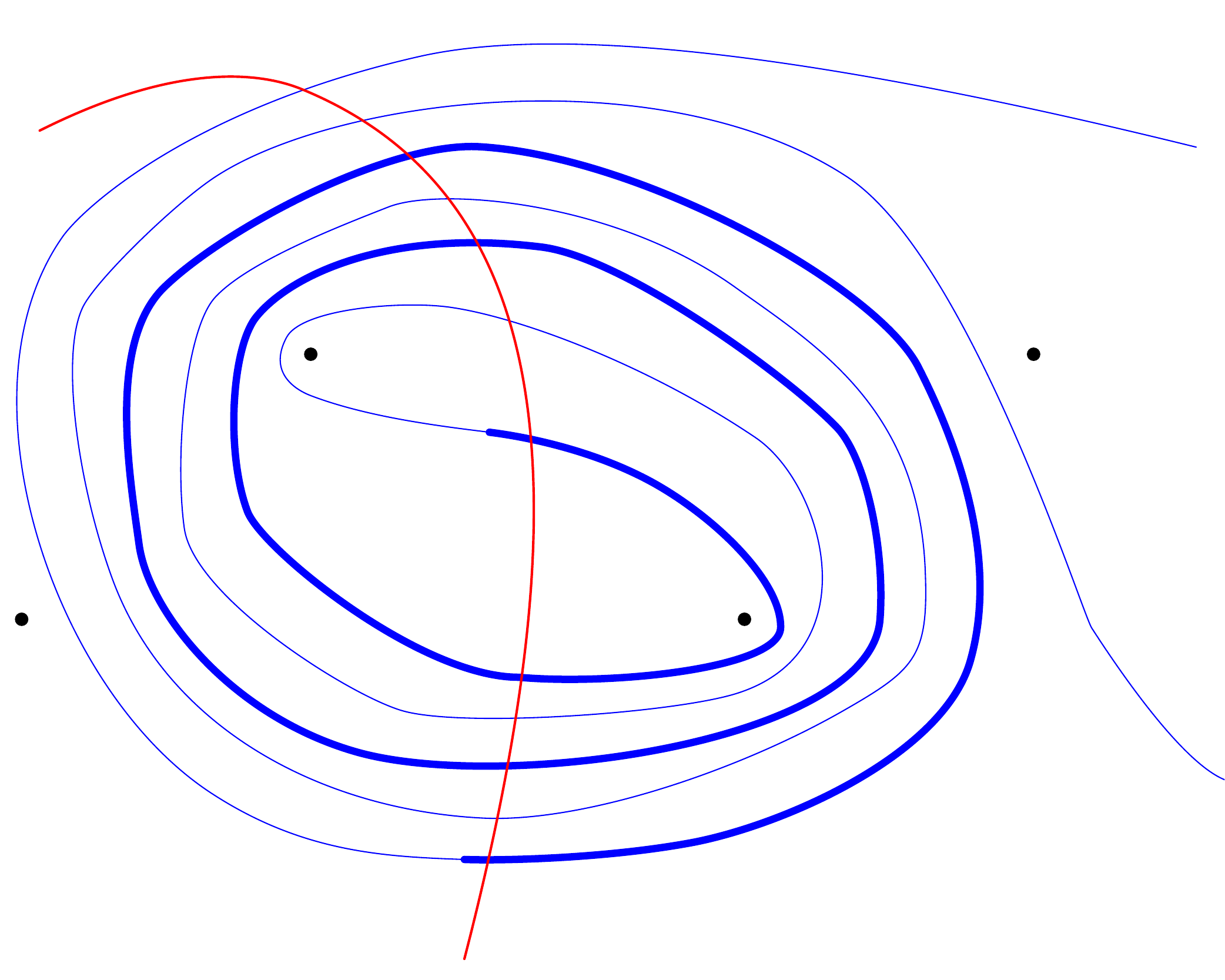}};
\node at (2,.5) {$\tikzcirc{2pt}$};
\node at (2.1,.86) {$\tikzcirc{2pt}$};
\node at (2.15,1.25) {$\tikzcirc{2pt}$};

\end{tikzpicture}
    \caption{The top left of the pairing diagram when $n>0$ and $i=3$. The intersection points connected by a spiral are in the same Alexander grading $g=g(Q^{i,j}_n(K))$.}\label{beta3jpairing}
  \end{minipage}
\end{figure}
    This proves the theorem in the case $i=0$. To deal with the cases $i>0$, recall that the lifted curve $\tbeta(i,j)$ is obtained from the lifted curve $\tbeta(i-1,j)$ by twisting up, as shown in Figure \ref{posttwistbeta}. We see in Figures \ref{beta(2,j)pairing} and \ref{beta3jpairing} that for each intersection point of $\talpha(T_{2,3},n)$ with $\tbeta(i-1,j)$ in Alexander grading $g$, there is one more intersection point of $\talpha(T_{2,3},n)$ with $\tbeta(i,j)$ in that same Alexander grading. The theorem follows. 
\end{proof}

With Lemma \ref{rankintop} in hand, we can prove Theorem \ref{fiberedness} from the introduction.

\begin{proof}[Proof of Theorem \ref{fiberedness}]

By \cite{HMS}, the pattern knot $Q^{i,j}_n$ is fibered in $S^1 \times D^2$ if and only if the satellite knot $Q^{i,j}_n(T_{2,3})$ is fibered in $S^3$. By the computation in lemma \ref{rankintop} and the fact that a knot in $S^3$ is fibered if and only if $\rank(\HFKhat(S^3,K,g(K)))=1$ \cite{Nifibered}, we see that the pattern knot $Q^{i,j}_n$ is fibered for $j \geq 2$ if and only if $i=0$ and $n \neq 0$ and when $j=1$ $Q^{i,1}_n$ is fibred if and only if $i=0$ and $n \neq 0, -1$. 
\end{proof}

\section{Thickness and unknotting number of generalized Mazur satellites with non-trivial companions}

In this section we prove Theorems \ref{theoremthinnontrivial} and \ref{theoremthintrivial} from the introduction, which give lower bounds on the thickness and torsion order for $n$-twisted satellites with patterns $Q^{i,j}$ and arbitrary non-trivial companions. Recall that a knot $K$ is called Floer thin if for all pairs of generators $x$ and $y$ of $\HFKhat(S^3,K)$ $M(x)-A(x)=M(y)-A(y)$. Equivalently, if we define the $\delta$-grading as $\delta(x)=M(x)-A(x)$ a knot is thin if the $\delta$ grading is constant for all generators. This concept was introduced in \cite{quasialt}, where they showed that all quasi-alternating knots have thin knot Floer homology. 

Suppose that there is a length $k$ vertical arrow between two distinct generators $x$ and $y$ of the knot Floer homology. Then $A(y)=A(x)-k$ and $M(y)=M(x)-1$. In this case, if we consider the collapsed $\delta$ grading we see that $\delta(x)=M(x)-A(x)$ and $\delta(y)=M(x)-1-(A(x)-k)=\delta(x)+k-1$. So if $k>1$, these two generators are supported in distinct $\delta$ gradings, and so the knot $K$ is not Floer homologically thin.

The proof of Theorem \ref{theoremthinnontrivial} relies on the observation that, since knot Floer homology detects the genus of knots, if a knot $K$ is non-trivial there is always a portion of the immersed curve in each column that exhibits this. We are only interested in the portion of the immersed curve in the second column of the pairing diagram which is shown in Figure \ref{nonthinnontrivial}.

\begin{figure}[!tbp]
  \begin{tikzpicture}
  \node at (2.8,1.3) {$\tikzcirc{2pt}$};
  \node[font=\tiny] at (2.6,1.2) {$x$};
  \node[font=\tiny] at (3.5,.3) {$y$};
  \node at (3.8,.4) {$\tikzcirc{2pt}$};
\node[anchor=south west,inner sep=0] at (0,0)    {\includegraphics[width=1\textwidth]{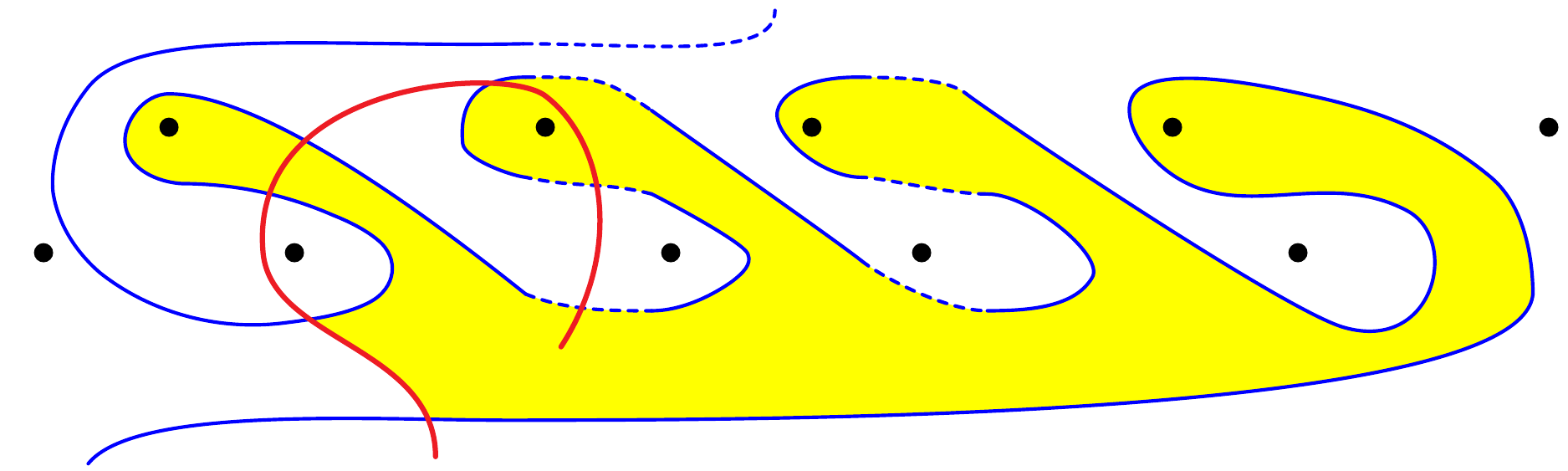}};
\end{tikzpicture}
    \caption{Illustration of two intersection points in the pairing diagram with a length $j+1$ vertical differential between them. The red arc is a portion of $\alpha(K,n)$ that exhibits the genus detection of knot Floer homology.}\label{nonthinnontrivial}
\end{figure}
\begin{proof}[Proof of Theorem \ref{theoremthinnontrivial}]
    Suppose $K$ is a non-trivial companion knot. Then the curve $\alpha(K,n)$ contains a portion as shown in Figure \ref{nonthinnontrivial} by the genus detection of knot Floer homology. Indeed the curve in each column, when paired with the meridian (a vertical line) recovers the knot Floer homology of $K$. The portion of the curve $\alpha(K,n)$ passes at height $g(K)$ (relative to height zero in this column) and must turn down on either side. We see that there are two intersection points, denoted $x$ and $y$, that are connected by a length $j+1$ vertical differential. Hence the knot $Q^{i,j}_n(K)$ is not Floer thin. 
\end{proof}
\begin{figure}[!tbp]
  \begin{tikzpicture}
\node[anchor=south west,inner sep=0] at (0,0)    {\includegraphics[width=1\textwidth]{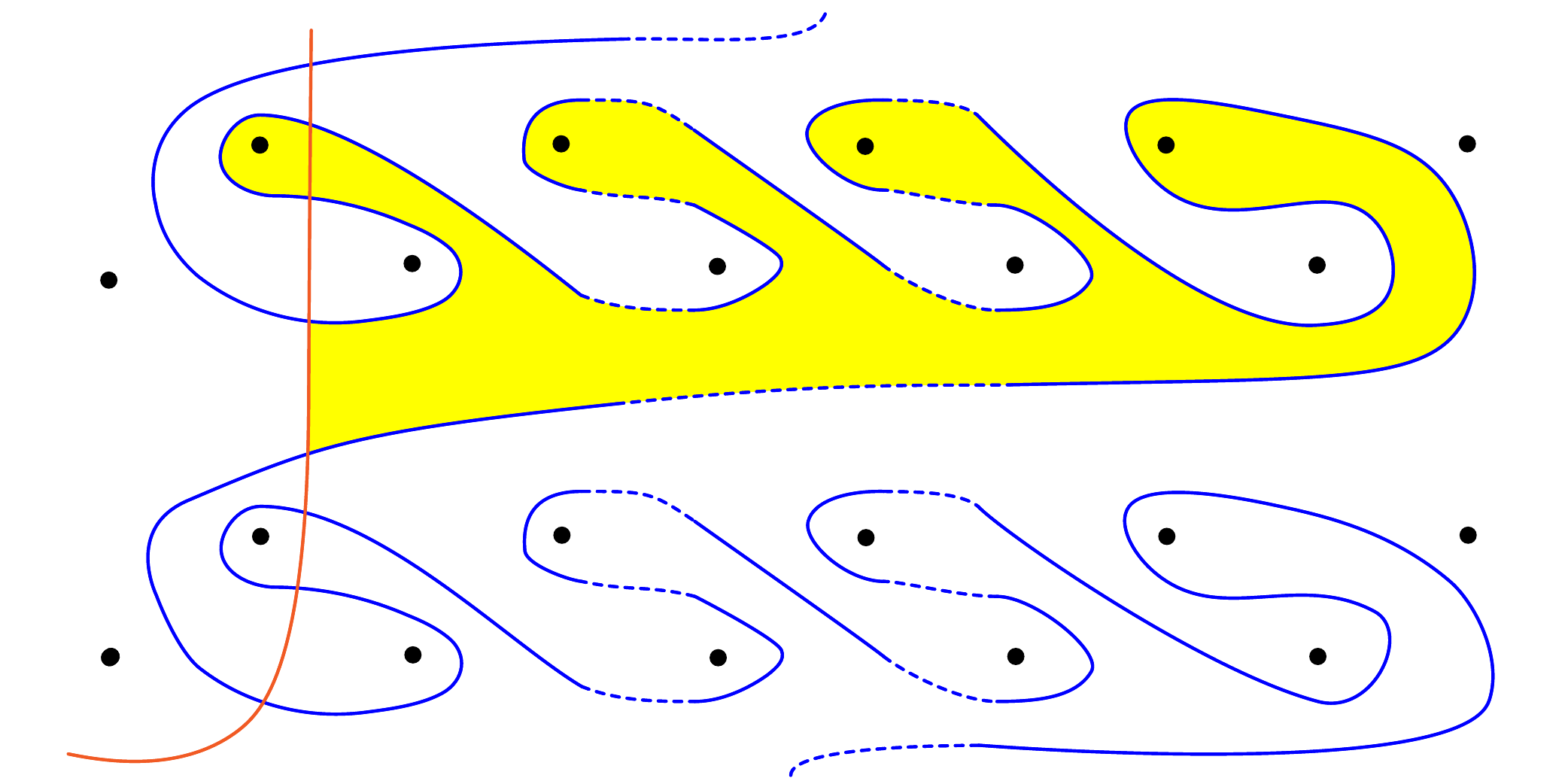}};
\node at (2.75,2.95) {$\tikzcirc{2pt}$};
\node[font=\tiny] at (2.6,3) {$y$};
\node[font=\tiny] at (2.6,4) {$x$};
\node at (2.75,4.1) {$\tikzcirc{2pt}$};
\end{tikzpicture}
    \caption{The pairing $\CFKhat(\alpha(U,n),\beta(i,j))$ when $n<-1.$}\label{nonthintrivialnegative2}
\end{figure}
\begin{figure}[!tbp]
  \begin{tikzpicture}
\node[anchor=south west,inner sep=0] at (0,0)    {\includegraphics[width=1\textwidth]{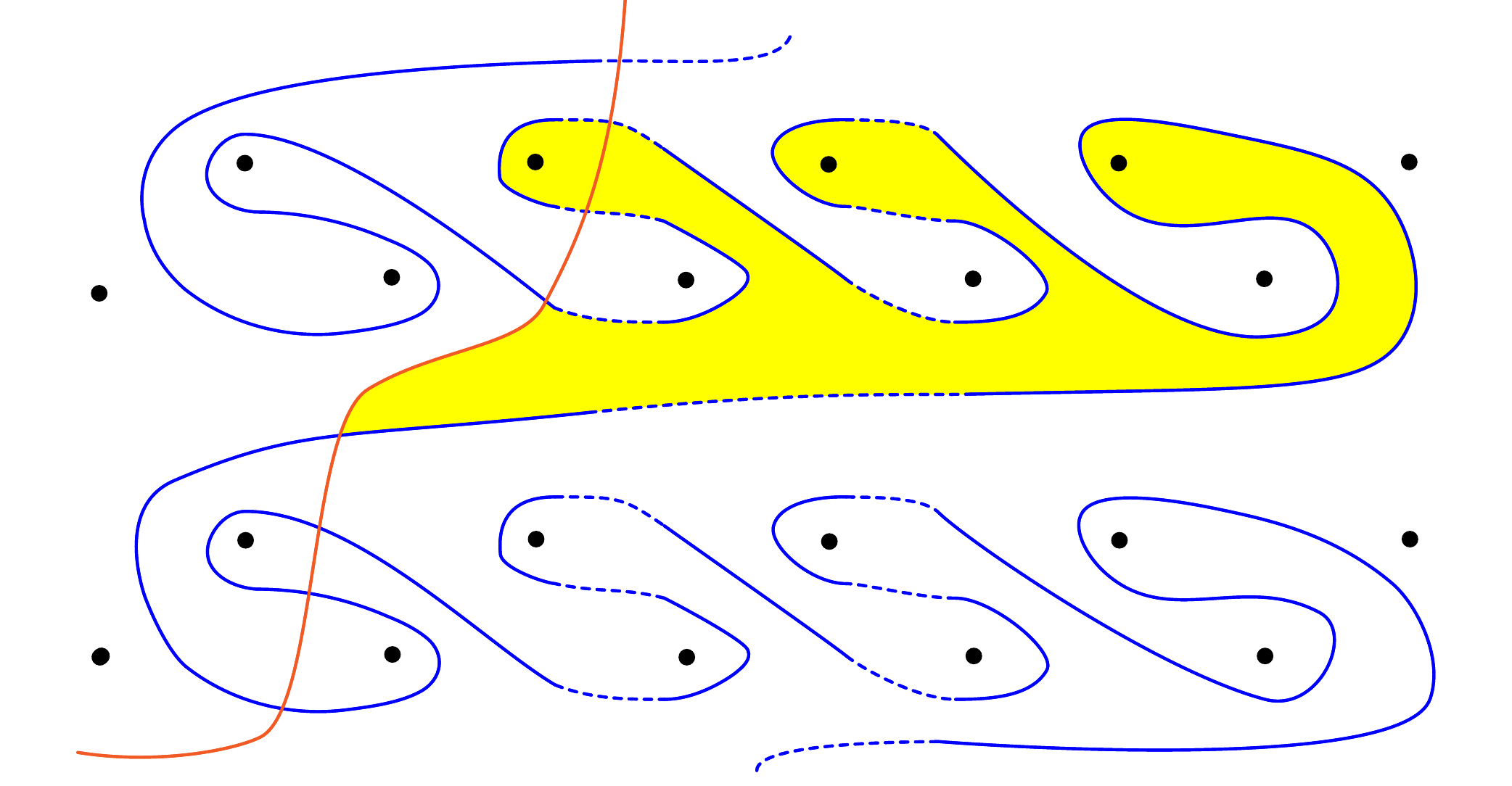}};
\node[font=\tiny] at (3.2, 3.25) {$\tikzcirc{2pt}$};
\node[font=\tiny] at (3,3.4) {$y$};
\node[font=\tiny] at (4.8,4.55) {$x$};
\node at (5,4.5) {$\tikzcirc{2pt}$};
\end{tikzpicture}
    \caption{The pairing $\CFKhat(\alpha(U,-1), \beta(i,j))$.}\label{nonthintrivialnegative}
\end{figure}
Next, we investigate what happens when the companion knot $K$ is the unknot. In that case, since $Q^{i,j}_0(U)\sim U$, it is clear that the $0$-twisted satellite is Floer thin. In all other cases, we prove the following
\begin{proof}[Proof of Theorem \ref{theoremthintrivial}]
   We prove the case $n<0$, the case $n>0$ is similar and left to the reader. Since $K=U$, the pairing diagram $\CFKhat(\alpha(U,n),\beta(i,j))$ has the form shown in Figures \ref{nonthintrivialnegative2} and \ref{nonthintrivialnegative}. Figure \ref{nonthintrivialnegative2} shows the case when $n<-1$ and Figure \ref{nonthintrivialnegative} shows the case when $n=-1$. The main difference is that $\alpha(U,-1)$ only crosses through one row before veering into the next column, and $\alpha(U,n)$ for $n<-1$ crosses through more than one row. In the case $n<-1$ inspecting Figure \ref{nonthintrivialnegative2} we see that there is a length $j+1$ vertical differential between the intersection points labelled $x$ and $y$. In the case $n=-1$, Figure \ref{nonthintrivialnegative} shows that there is a length $j$ vertical differential between the intersection points labelled $x$ and $y$. Inspection of the pairing diagram shows that these are the longest possible vertical differentials in the complex $\CFKhat_{\F[U,V]/UV}(S^3,Q^{i,j}_n(U))$. Therefore, when $n<-1$, the satellite knot $Q^{i,j}_n(U)$ is never thin and when $n=-1$, the satellite knot $Q^{i,j}_n(U)$ is thin if and only if $j=1$. \qedhere
\end{proof}

The \emph{$V$-torsion order} of a knot, $\Ord_V(K)$ is the smallest integer $k$ with the property that $V^k(Tors(\HFK^{-}_{\F[V]}(S^3,K)))=0$. The proofs of Theorem \ref{theoremthinnontrivial} and \ref{theoremthintrivial}, in addition to determining when the satellite knots $Q^{i,j}_n(K)$ are not thin, also gives a lower bound on the torsion order of $Q^{i,j}_n(K)$: 

\begin{corollary}
    When $K$ is non-trivial, or when $K=U$ and $n \neq -1,$ $\Ord_V(Q^{i,j}_n(K)) \geq j+1$. When $K=U$ and $n=-1$, $\Ord_V(Q^{i,j}_{-1}(U)) \geq j$.
\end{corollary}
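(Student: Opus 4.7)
The plan is to repurpose the long vertical differentials already identified in the proofs of Theorems \ref{theoremthinnontrivial} and \ref{theoremthintrivial}. The underlying principle is standard: if in a reduced model for $\CFK^-_{\F[V]}(S^3,K)$ there is a differential of the form $\partial(x) = V^k y + \cdots$ between distinct generators, then $V^k$ does not annihilate the torsion class represented by $y$, so $\Ord_V(K) \geq k$. By Theorem \ref{Pairing}, the lifted intersection Floer complex $(\CFKhat(\talpha(K,n),\tbeta,\pi^{-1}(z),\pi^{-1}(w)),\partial^v,\partial^h)$ computes $\CFK_{\F[U,V]/UV}(S^3,Q^{i,j}_n(K))$, and setting $U=0$ recovers $\CFK^-_{\F[V]}$; in particular, the vertical component $\partial^v$, which counts bigons crossing $z$ with multiplicity, records exactly the $V$-weights of differentials in this model.

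Next, I would invoke, case by case, the vertical arrows drawn in the preceding proofs. When $K$ is non-trivial, Figure \ref{nonthinnontrivial} exhibits intersection points $x$ and $y$ joined by a vertical bigon of multiplicity $j+1$, yielding $\Ord_V(Q^{i,j}_n(K)) \geq j+1$. When $K=U$ and $n<-1$, Figure \ref{nonthintrivialnegative2} exhibits a length-$(j+1)$ vertical differential, and when $K=U$ and $n=-1$, Figure \ref{nonthintrivialnegative} exhibits a length-$j$ vertical differential, giving the two remaining bounds in the statement. The case $K=U$ and $n>0$, not drawn explicitly, is handled by the mirror-symmetric pairing diagram, which by an identical analysis produces a length-$(j+1)$ vertical arrow, so the bound $\Ord_V \geq j+1$ holds there as well.

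The main thing to verify carefully is that the identified arrow lengths survive as genuine lower bounds on torsion order rather than being shortened by cancellations at the chain level. Because we have already arranged that the intersection between $\talpha(K,n)$ and $\tbeta(i,j)$ is reduced, so that no bigons avoiding both basepoints contribute, the pairing diagram is a reduced (and hence minimal up to chain homotopy) model for $\CFK^-_{\F[V]}$. In such a model, any simplification that would shorten a $V^k$-weighted arrow into a $V^{k'}$-weighted arrow with $k'<k$ would require an intermediate differential carrying no $V$-weight, and no such arrow is present. Consequently each of the distinguished $V^{j+1}$ (respectively $V^{j}$) differentials persists in the torsion part of $\HFK^-_{\F[V]}$, and the stated lower bounds on $\Ord_V$ follow immediately.
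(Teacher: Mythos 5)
Your proposal follows the paper's approach exactly: both cite the long vertical differentials exhibited in the proofs of Theorems~\ref{theoremthinnontrivial} and~\ref{theoremthintrivial} and conclude the torsion-order bounds from them, with the $K=U$, $n>0$ case handled by symmetry. However, the justification you add in the final paragraph is incorrect. Reducedness of the pairing diagram means the complex has no $V^0$-weighted differentials, hence is minimal (unique up to isomorphism as a chain complex over $\F[V]$); but minimality does \emph{not} pin down the lengths of the vertical arrows, and shortening a long arrow does \emph{not} require an intermediate $V^0$ differential. For instance, with generators $a,b,c$ and $\partial a = Vb$, $\partial c = V^{10}b$, the complex is already reduced, yet the change of basis $c \mapsto c - V^9 a$ eliminates the $V^{10}$ arrow entirely, and the actual torsion order is $1$, not $10$. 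So the mere existence of a $V^{j+1}$ arrow in a reduced basis is not a lower bound for $\Ord_V$. (There is also a minor slip in your first paragraph: if $\partial x = V^k y$ then $V^k[y]=0$; the relevant assertion is that $V^{k-1}$ does not annihilate $[y]$.)

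What the argument actually needs --- and what the paper leaves implicit --- is a reason that the specific $V^{j+1}$ (respectively $V^j$) arrow identified in Figures~\ref{nonthinnontrivial}, \ref{nonthintrivialnegative2}, \ref{nonthintrivialnegative} survives vertical simplification. Concretely, it suffices to observe from those pairing diagrams that the target $y$ is a cycle for $\partial^v$ whose only incoming vertical differential is the length-$(j+1)$ (resp.\ length-$j$) one from $x$, so that the annihilator of the class $[y]$ in $\HFK^-_{\F[V]}$ is exactly $(V^{j+1})$ (resp.\ $(V^j)$); equivalently, one can check directly that a vertically simplified basis retains an arrow of that length. Without such a diagram-level check, the bound does not follow from reducedness alone.
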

\begin{proof}
    The proofs of Theorems \ref{theoremthinnontrivial} and \ref{theoremthintrivial} shows that the complex $g\CFK^{-}(Q^{i,j}_n(K))$ has a length $j+1$ vertical differential in the case $K$ is non-trivial or $K=U$ and $n \neq -1$, or a length $j$ vertical differential in the case $K=U$ and $n=-1$. 
\end{proof}
Since the torsion order is a lower bound for the unknotting number of a knot \cite{unknotting} the following Corollary is immediate. This verifies a conjecture of Hom, Lidman and Park in the case that the pattern knot is an $n$-twisted generalized Mazur pattern \cite[Conjecture 1.10]{hom2022unknotting}. 

\begin{corollary}
    The satellite knots $Q^{i,j}_n(K)$ with non-trivial companions have unknotting number at least $j+1=w(Q^{i,j}_n)+1$. 
\end{corollary}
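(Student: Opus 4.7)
The plan is to simply combine the previous corollary with the Alishahi--Eftekhary type bound cited as \cite{unknotting}. Specifically, the preceding corollary establishes that whenever $K$ is non-trivial we have the lower bound $\Ord_V(Q^{i,j}_n(K)) \geq j+1$. The bound from \cite{unknotting} asserts that for any knot $J \subset S^3$, the unknotting number satisfies $u(J) \geq \Ord_V(J)$. Chaining these two inequalities together immediately gives $u(Q^{i,j}_n(K)) \geq j+1$.

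To finish the proof, I only need to identify $j+1$ with $w(Q^{i,j}_n)+1$. This is immediate from the setup: the winding number of the pattern $Q^{i,j}_n$ with respect to the solid torus is $j$, as stated in the introduction (and as is visible from Figure \ref{Qij}, where the pattern wraps $j$ times around the longitude). Thus $w(Q^{i,j}_n)+1 = j+1$, and the inequality $u(Q^{i,j}_n(K)) \geq w(Q^{i,j}_n)+1$ follows.

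There is no real obstacle here; the statement is a direct corollary of the previous one together with the torsion-order/unknotting-number inequality of \cite{unknotting}. The substantive work was carried out in the proofs of Theorems \ref{theoremthinnontrivial} and \ref{theoremthintrivial}, where the length-$(j+1)$ vertical differential in the pairing diagram was exhibited; everything downstream (the $V$-torsion lower bound and now the unknotting-number lower bound) is a formal consequence.
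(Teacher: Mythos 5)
Your proposal is correct and matches the paper's own (implicit) argument: chain the preceding corollary's bound $\Ord_V(Q^{i,j}_n(K)) \geq j+1$ with the torsion-order lower bound on unknotting number from \cite{unknotting}, then observe $w(Q^{i,j}_n)=j$. Nothing more is needed.
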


\section{Heegaard Floer Concordance Invariants and Twisting}\label{tausectionMazur}

In this section we determine the dependence of the invariants $\tau$ and $\epsilon$ on the parameters $i$, $j$ and the twisting parameter $n$. First, we will determine the invariants $\tau(Q^{0,j}_n(K))$ and $\epsilon(Q^{0,j}_n(K))$ in terms of $\tau(K)$, $\epsilon(K)$, $j$ and $n$, and then we will show that $\tau(Q^{i,j}_n(K))$ and $\epsilon(Q^{i,j}_n(K))$ are independent of $i \in \Z_{\geq 0}$.

Recall that by Theorem \ref{Pairing}, the complex $\CFK_{\F[U,V]/UV}(S^3,Q^{0,j}_n(K))$ can be extracted from the pairing diagram by considering disks that cover either the $z$ or $w$ basepoint and do not cover both. Let $\CFK_{\F[V]}(S^3,Q^{0,j}_n(K))$ denote the complex obtained by only counting disks that cross the $z$-basepoint (so the $U=0$ quotient of $\CFK_{\F[U,V]/UV}(S^3,Q^{0,j}_n(K))$). Theorem \ref{Pairing} shows that this complex is isomorphic to $g\CFK^{-}(S^3,Q^{0,j}_n(K))$ and so has homology isomorphic to $\HFK^{-}(S^3,Q^{0,j}_n(K))$ as an $\F[V]$ module. The structure theorem for $\HFK^{-}$ implies that it has a single free $\F[V]$ summand, and the generator of this summand has Alexander grading $\tau(Q^{0,j}_n(K))$ by \cite[Appendix A]{taucombinatorialFLoer}. Therefore, to determine the value of $\tau$ of satellites with arbitrary companions, arbitrary framings and patterns $Q^{0,j}$, we will use Theorem \ref{Pairing} to identify a collection of intersection points, so generators of the complex $\CFK_{\F[U,V]/UV}(S^3,Q^{0,j}_n(K))$, that form a subcomplex with respect to the vertical $z$-basepoint differentials (when we set $U=0$) and generate the $\F[V]$ free part of the homology of $\HFK^{-}(S^3,Q^{0,j}_n(K))$. Setting $V=1$ in this complex gives $\HFhat(S^3)$ and so, said another way, we identify a cycle in $\HFKhat(S^3,Q^{0,j}_n(K))$ that, in the $V$-filtration, survives in $\HFhat(S^3)$. 

We will see in the pairing diagram that the form of this subcomplex is completely determined by the piece of the essential component of $\talpha(K,n)$ in the first column of the lifted pairing diagram corresponding to Lemmas \ref{curveshape1} and \ref{curveshape2}.
Once we identify the cycle that generates the $\F[V]$-free part of the homology (so survives in the spectral sequence to $\HFhat(S^3)$), it can be extended to be the distinguished element of some vertically simplified basis, as in \cite[Section2.3]{Hom}. Then it is possible to determine $\epsilon(Q^{0,j}_n(K))$ from the horizontal ($w$-basepoint crossing) differentials. By \cite[Definition 3.4 and Lemma 3.2]{Hom} $\epsilon(K)=1,-1$ or $0$ depending on whether the distinguished element of the vertically simplified basis has a horizontal differential into it, out of it, or neither respectively.

As in Lemmas \ref{curveshape1} and \ref{curveshape2}, we distinguish multiples cases for the essential component of $\talpha(K,n)$ depending on $\tau(K)$, $\epsilon(K)$, and $n$. In each case the form of the pairing diagram, and thus the subcomplex carrying the $\F[V]$ free part of the homology, changes. Moreover, the Alexander grading labels of the arcs of the $\beta$ curve relative to the central intersection point of the pairing diagram also change. 
As in the proof of Theorem \ref{genusnontrivial}, there are also multiple sub-cases depending on whether $j$ and $n$ are even or odd. We mostly draw the pairing diagram in the case $j$ is odd, since the pictures are slightly simpler. We analyze the case $j$ even and $n$ odd in Figure \ref{taunegepsilonnegnpos}, and leave the rest of the cases where $j$ is even to the reader.

\begin{figure}[!tbp]
  \centering
  \begin{minipage}[b]{0.32\textwidth}
  \begin{tikzpicture}\hspace{-.6in}
\node[anchor=south west,inner sep=0] at (0,0)    {\includegraphics[width=1.3\textwidth]{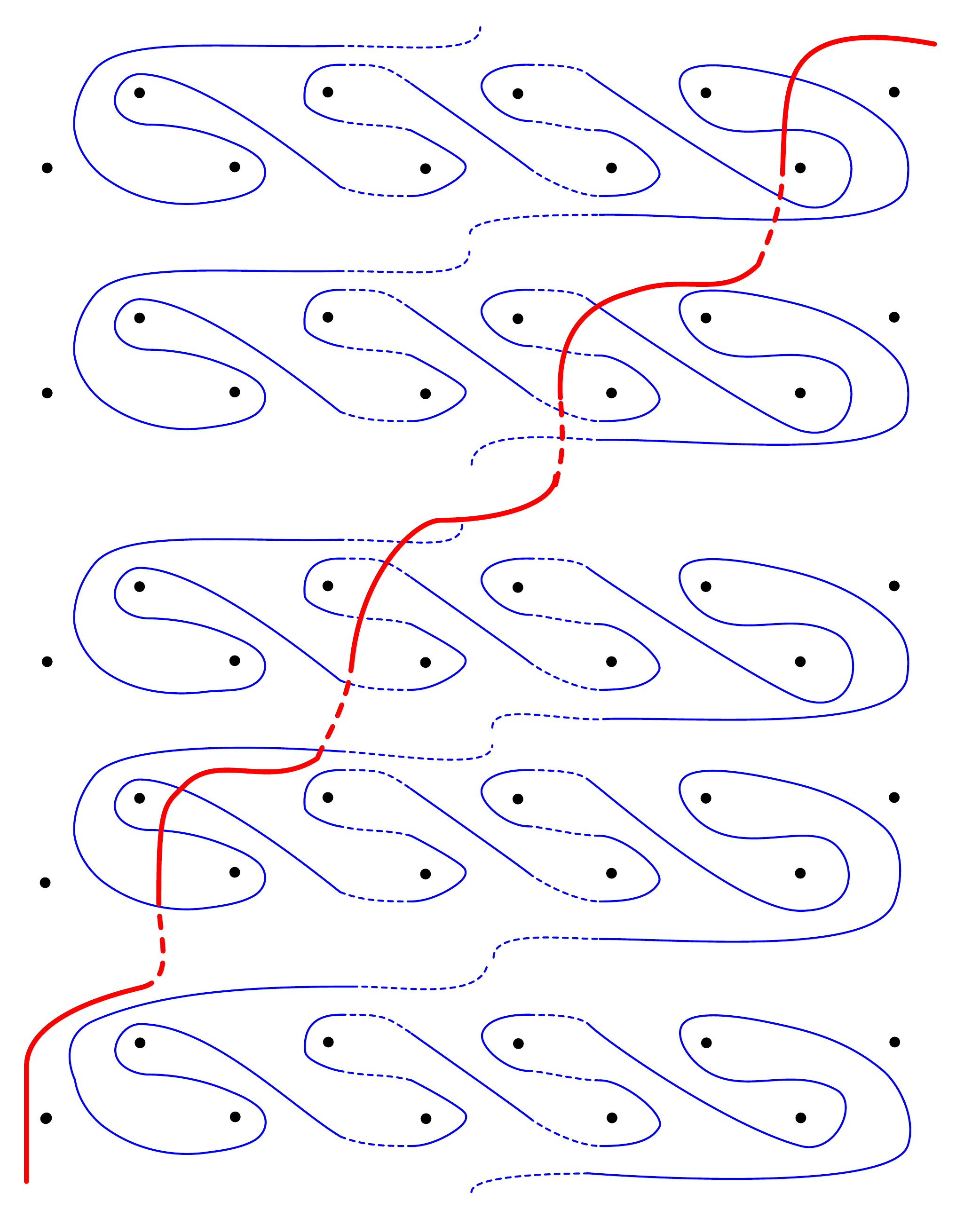}};
\draw (0,2.75) to (0,4.1);
\draw (-.1,2.75) to (.1,2.75);
\draw (-.1,4.1) to (.1,4.1);
\node[font=\tiny, rotate=90] at (-.3,3.3) {$\dfrac{j-1}{2}n$};
\node at (1,1.9) {$\tikzcirc{2pt}$};
\node at (1.95,2.85) {$\tikzcirc{2pt}$};
\node at (2.4,4.05) {$\tikzcirc[]{2pt}$};
\node at (2.1,3.2) {$\tikzcircle{2pt}$};
\node at (2.15,3.6) {$\tikzcirclee{2pt}$};
\node at (1,2.35) {$\tikzcirclee{2pt}$};
\node[font=\tiny] at (.6,1.85) {$a_{3}$};
\node[font=\tiny] at (1.7,2.95) {$a_{2}$};
\node[font=\tiny] at (2.2,3.05) {$a_{1}$};
\node[font=\tiny] at (2.3,3.45) {$b_1$};
\node[font=\tiny] at (.8,2.35) {$b_3$};
\node[font=\tiny] at (2.4,4.2) {$c$};
\end{tikzpicture}
    \caption{$\epsilon(K)=\tau(K)=0$ and $n<0$.}\label{tauepsilon0nneg}
  \end{minipage}\hspace{1in}
  \begin{minipage}[b]{0.3\textwidth}
  \begin{tikzpicture}\hspace{-.3in}
\node[anchor=south west,inner sep=0] at (0,0)    {\includegraphics[width=1.3\textwidth]{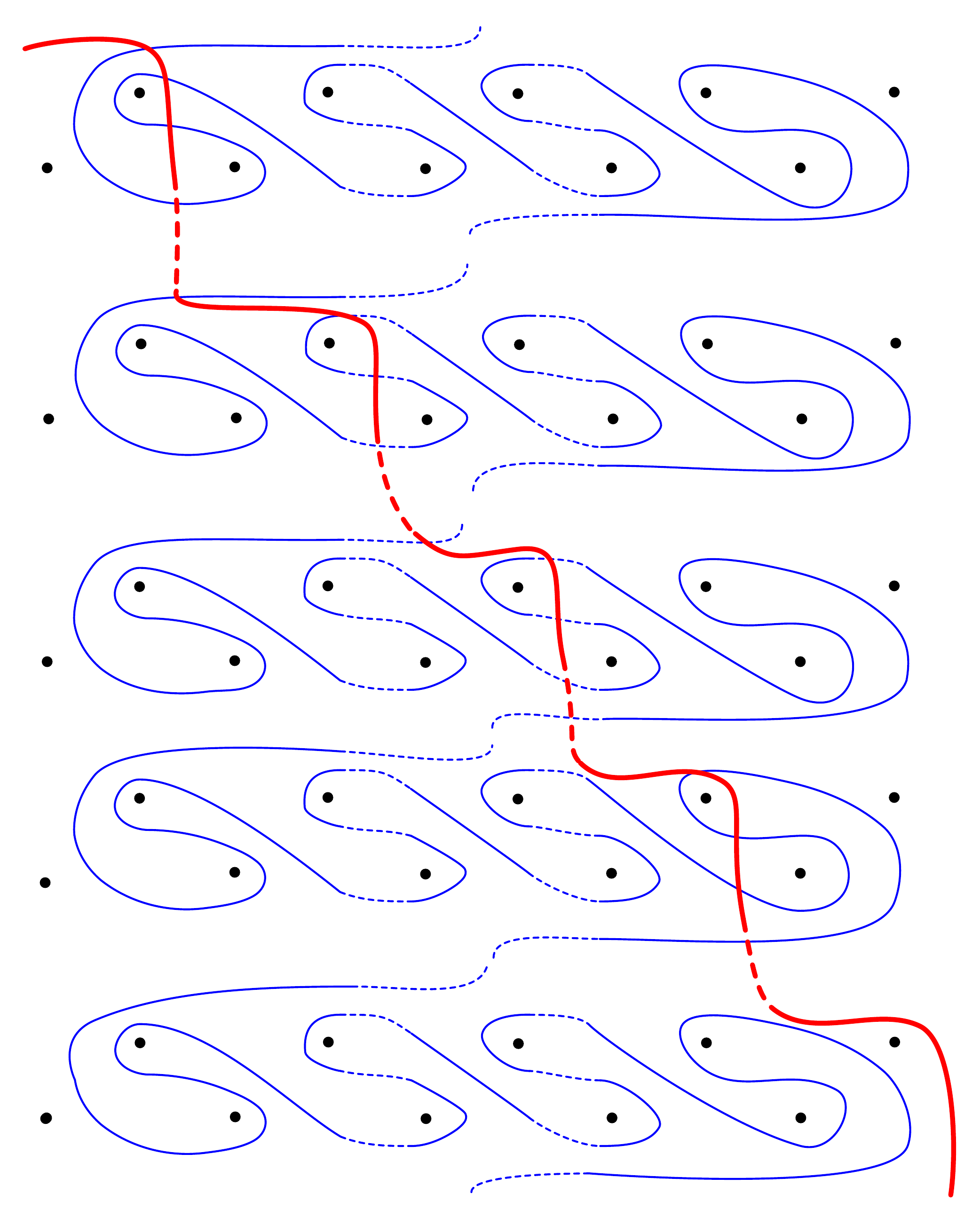}};
\draw (0,3.75) to (0,5.2);
\draw (-.1,3.75) to (.1,3.75);
\draw (-.1,5.2) to (.1,5.2);
\node[font=\tiny, rotate=90] at (-.3,4.5) {$\dfrac{j-1}{2}n$};
\node at (1,5.2) {$\tikzcircle{2pt}$};
\node at (1,5.7) {$\tikzcirc{2pt}$};
\node at (2.4,3.8) {$\tikzcirc[]{2pt}$};
\node at (.9,6.5) {$\tikzcirc{2pt}$};
\node at (2,5) {$\tikzcirclee{2pt}$};
\node[font=\tiny] at (.8,5.3) {$a_{1}$};
\node[font=\tiny] at (.8,5.6) {$a_{2m}$};
\node[font=\tiny] at (1,6.7) {$a_{2m+1}$};
\node[font=\tiny] at (2.2,5) {$b$};
\node[font=\tiny] at (2.4,3.95) {$c$};
\end{tikzpicture}\hspace{1in}
    \caption{$\epsilon(K)=\tau(K)=0$ and $n>0$.}\label{tauepsilon0npos}
  \end{minipage}
\end{figure}

\begin{proof}[Proof of Theorem \ref{tau}]
The proof is divided into many cases, first by the value of $\epsilon(K)$, then into whether $\tau(K)$ is positive or negative, and then into various cases of whether or not $n \geq 2\tau(K)$ or $n <2\tau(K)$. The pictures look slightly different when, for example $\tau(K) \geq 0$ and either $n\leq0\leq 2\tau(K)$ or $0\leq n\leq 2\tau(K)$, so we separately analyze those cases as well.

\textbf{Case 0 $\epsilon(K)=0$}: In this case it follows that $\tau(K)=0$ \cite{Hom}, and the essential component of the immersed curve $\alpha(K,n)$ is the same as the immersed curve for the $n$-framed unknot complement, and so $\tau(Q^{0,j}_n(K))=\tau(Q^{0,j}_n(U))$. The case $n=0$ is clear, since $Q^{i,j}_0(U) \sim U$. We indicate the pairing diagrams for the cases $n<0$ and $n>0$ in Figures \ref{tauepsilon0nneg} and \ref{tauepsilon0npos}. \\
In the case that $n<0$, the intersection points labelled $\{a_k\}_{k=1}^{2m+1}$ form a subcomplex of $\CFK_{\F[V]}(S^3,Q^{0,j}_n(U))$ with respect to the vertical differentials that contains an $\F[V]$ free part. Setting $V=1$ in the above subcomplex, we see that the cycle $\sum a_{2i+1}$ generates $\HFhat(S^3)$. Note that the intersection points $b_{2i+1}$ satisfy $\partial^h(\sum b_{2i+1})=U \sum a_{2i+1}$, so that $\epsilon(Q^{0,j}_n(U))=1$ by \cite[Section 3]{Hom}. Recall that $A(\sum a_{2i+1})=\max\{A(a_{2i+1})\}$, and from this it is easy to see that $\tau(Q^{0,j}_n(U))=A(a_1)$. Then, $A(a_1)=\ell_{c,a_1} \cdot \delta_{w,z}$, where $\ell_{c,a}$ is the arc of the lifted $\beta$ curve from $c$ to $a$ by \cite[Lemma 4.1]{Chen}. Now as remarked the Alexander grading labels of arcs of the $\beta$ curve change by $-j$ for each row we go down in the pairing diagram, so we see that $A(a)=\tau(Q^{0,j}_n(U))=-j\left(\dfrac{j-1}{2}|n|-1\right)=\dfrac{j(j-1)}{2}n+j$\\

\begin{figure}[!tbp]
  \centering
  \begin{minipage}[b]{0.3\textwidth}
  \begin{tikzpicture}\hspace{-.4in}
\node[anchor=south west,inner sep=0] at (0,0)    {\includegraphics[width=1.2\textwidth]{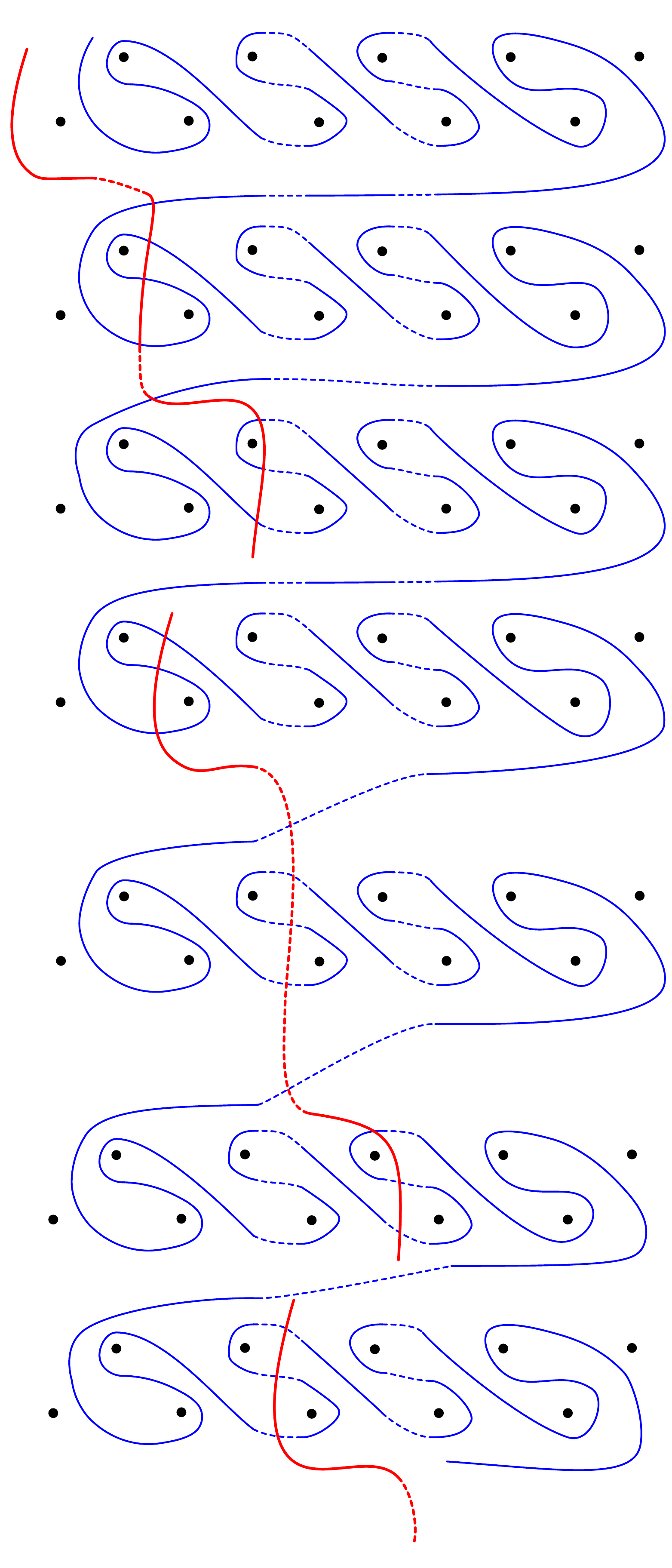}};
\node at (1.15,10.2) {$\tikzcirc{2pt}$};
\node at (1.05,9.15) {$\tikzcirc{2pt}$};
\node at (1.2,8.75) {$\tikzcircle{2pt}$};
\node at (1.9,8.6) {$\tikzcirclee{2pt}$};
\node at (2.5,2.1) {$\tikzcirc[]{2pt}$};
\node[font=\tiny] at (1.2,10.4) {$a_{2i+1}$};
\node[font=\tiny] at (1.15,9.3) {$a_{2i+2}$};
\node[font=\tiny] at (.7,8.8) {$a_{1}$};
\node[font=\tiny] at (2,8.8) {$b$};
\node[font=\tiny] at (2.5,2.3) {$c$};

\draw[very thin] (0,2.1) to (0,3.3);
\draw[very thin] (-.1,2.1) to (.1,2.1);
\draw[very thin] (-.1,3.3) to (.1,3.3);
\draw[very thin] (0,3.3) to (0,6);
\draw[very thin] (-.1,3.3) to (.1,3.3);
\draw[very thin] (-.1,6) to (.1,6);
\draw[very thin] (0,6) to (0,8.8);
\draw[very thin] (-.1,8.8) to (.1,8.8);
\node[font=\tiny, rotate=90] at (-.2,2.6) {$\tau(K)$};
\node[font=\tiny, rotate=90] at (-.2,7.5) {$2\tau(K)$};
\node[font=\tiny, rotate=90] at (-.4,4.6) {$\left(\dfrac{j-1}{2}\right)n-2\tau(K)$};
\draw[very thin] (5.1,10.2) to (5.1,8.8);
\draw[very thin] (5,10.2) to (5.2,10.2);
\draw[very thin] (5,8.8) to (5.2,8.8);
\node[font=\tiny, rotate=90] at (5.3,9.5) {$n-2\tau(K)$};
\end{tikzpicture}
    \caption{The pairing diagram when $\tau(K)\geq 0$, $\epsilon(K)=1$ and $n\geq 2\tau(K)$ and $j$ odd.}\label{taupositiveepsilononenbig2tau}
  \end{minipage}\hspace{2in}
  \begin{minipage}[b]{0.3\textwidth}
  \begin{tikzpicture}\hspace{-1in}
\node[anchor=south west,inner sep=0] at (0,0)    {\includegraphics[width=1.2\textwidth]{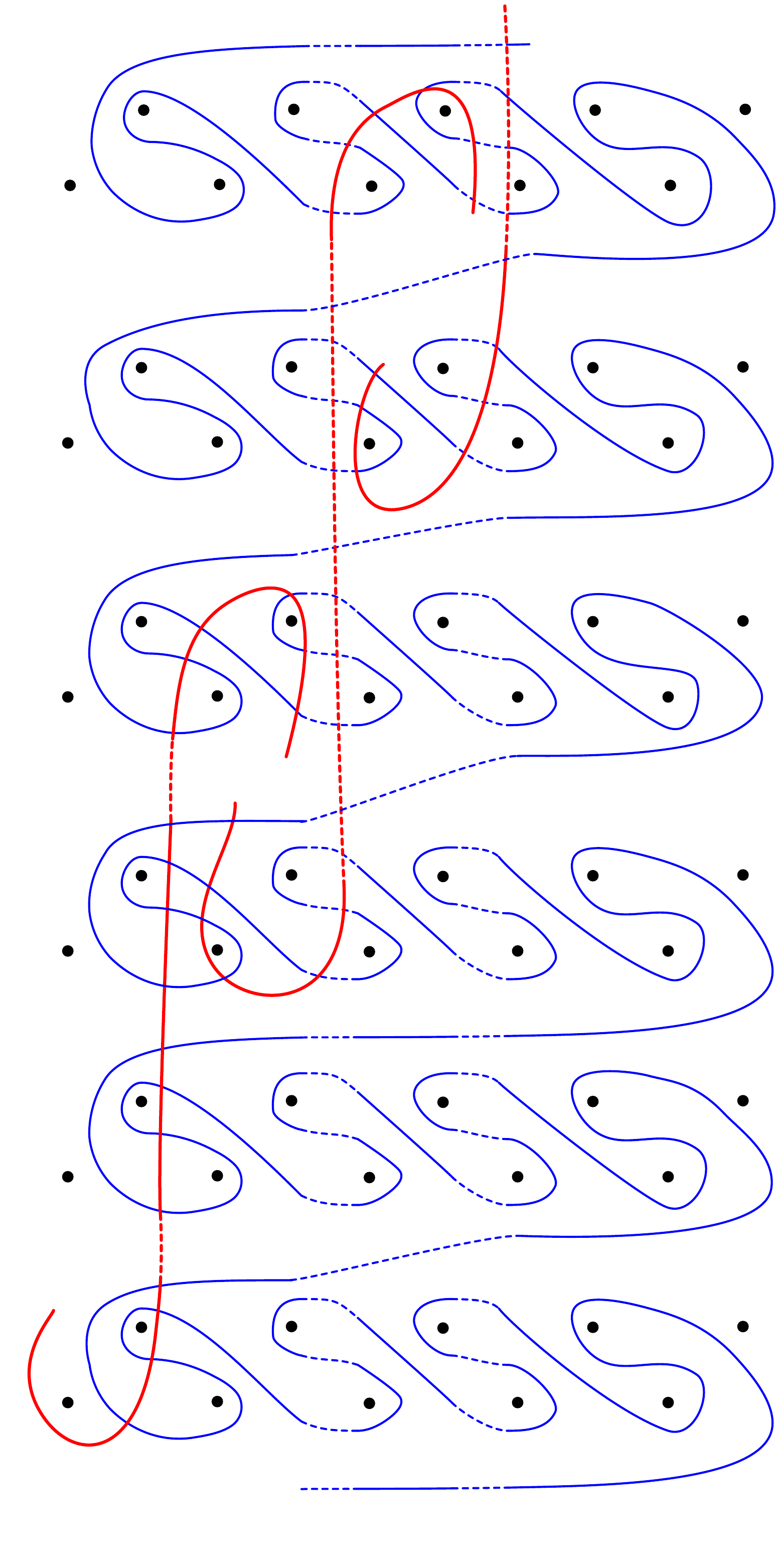}};
\node at (.85,.9) {$\tikzcirc{2pt}$};
\node at (1.05,1.8) {$\tikzcirc{2pt}$};
\node at (1.05,2.3) {$\tikzcirc{2pt}$};
\node at (1.05,3.3) {$\tikzcirc{2pt}$};
\node at (1.07,3.75) {$\tikzcirc{2pt}$};
\node at (1.1,4.75) {$\tikzcirc{2pt}$};
\node at (1.13,5.35) {$\tikzcircle{2pt}$};
\node at (1.2,5.8) {$\tikzcirclee{2pt}$};
\node at (1.12,4.2) {$\tikzcirclee{2pt}$};
\node at (1.05,2.8) {$\tikzcirclee{2pt}$};
\node at (1,1.35) {$\tikzcirclee{2pt}$};
\node at (2.6,8.2) {$\tikzcirc[]{2pt}$};

\node[font=\tiny] at (.85,.7) {$a_{2m+1}$};
\node[font=\tiny] at (1.1,1.9) {$a_{2m}$};
\node[font=\tiny] at (1.2,2.2) {$a_{2i+1}$};
\node[font=\tiny] at (1.2,3.45) {$a_{2i}$};
\node[font=\tiny] at (.6,3.75) {$a_{2i-1}$};
\node[font=\tiny] at (1.35,4.9) {$a_{2i-2}$};
\node[font=\tiny] at (1.35,5.2) {$a_1$};
\node[font=\tiny] at (1.2,6) {$b_1$};
\node[font=\tiny] at (1.5,4.35) {$b_{2i-1}$};
\node[font=\tiny] at (1.5,2.9) {$b_{2i+1}$};
\node[font=\tiny] at (1.5,1.4) {$b_{2m+1}$};

\node[font=\tiny] at (2.6,8.4) {$c$};
\draw[very thin] (5.1,8.2) to (5.1,6.9);
\draw[very thin] (5,8.2) to (5.2,8.2);
\draw[very thin] (5,6.9) to (5.2,6.9);
\draw[very thin] (0,6.9) to (0,6.1);
\draw[very thin] (-.1,6.9) to (0.1,6.9);
\draw[very thin] (-.1,6.1) to (0.1,6.1);
\draw[very thin] (0,6.1) to (0,.9);
\draw[very thin] (-.1,6.1) to (0.1,6.1);
\draw[very thin] (-.1,.9) to (0.1,.9);
\node[font=\tiny, rotate=90] at (-.2,3.1) {$2\tau(K)-n$};
\node[font=\tiny, rotate=90] at (5.3,7.6) {$\tau(K)$};
\node[font=\tiny] at (-1,6.4) {$\dfrac{j-1}{2}|n|-2\tau(K)$};
\end{tikzpicture}\hspace{1in}
    \caption{The pairing diagram when $\tau(K)>0$ $\epsilon(K)=1$ and $n\leq 0<2\tau(K)$ and $j$ odd.}\label{tauposepsilononenneg}
  \end{minipage}\hspace{1in}
\end{figure}
In the case that $n>0$, The intersection points $\{a_k\}$ form a subcomplex of $\CFK_{\F[V]}(S^3,Q^{0,j}_n(U))$ that contains an $\F[V]$-free part, and we see that the cycle $a_1$ generates $\HFhat(S^3)$. Directly from Figure \ref{tauepsilon0npos} we see that the intersection point $b$ satisfies $\partial^h(b)=U^2a_1$ so $\epsilon(Q^{0,j}_n(U))=1$. Furthermore, we have that $A(a_1)=\tau(Q^{0,j}_n(U))=\dfrac{j(j-1)}{2}n$.

\textbf{Case 1 $\epsilon(K)=1$}: In the case $\epsilon(K)=1$, we first distinguish between the cases $\tau(K)$ positive and negative and then distinguish further between various sub-cases depending on the value of $n$ relative to $\tau(K)$.

\begin{figure}[!tbp]
  \begin{tikzpicture}
\centering\hspace{.6in}
\draw (12,6.7) to (12,11.2);
\draw (11.9,6.7) to (12.1,6.7);
\draw (11.9,11.2) to (12.1,11.2);
\draw (12,11.2) to (12,13.6);
\draw (11.9,11.2) to (12.1,11.2);
\draw (11.9,13.6) to (12.1,13.6);
\draw (12,15.5) to (12,13.6);
\draw (11.9,15.5) to (12.1,15.5);
\node[anchor=south west,inner sep=0] at (0,2)    {\includegraphics[scale=.3]{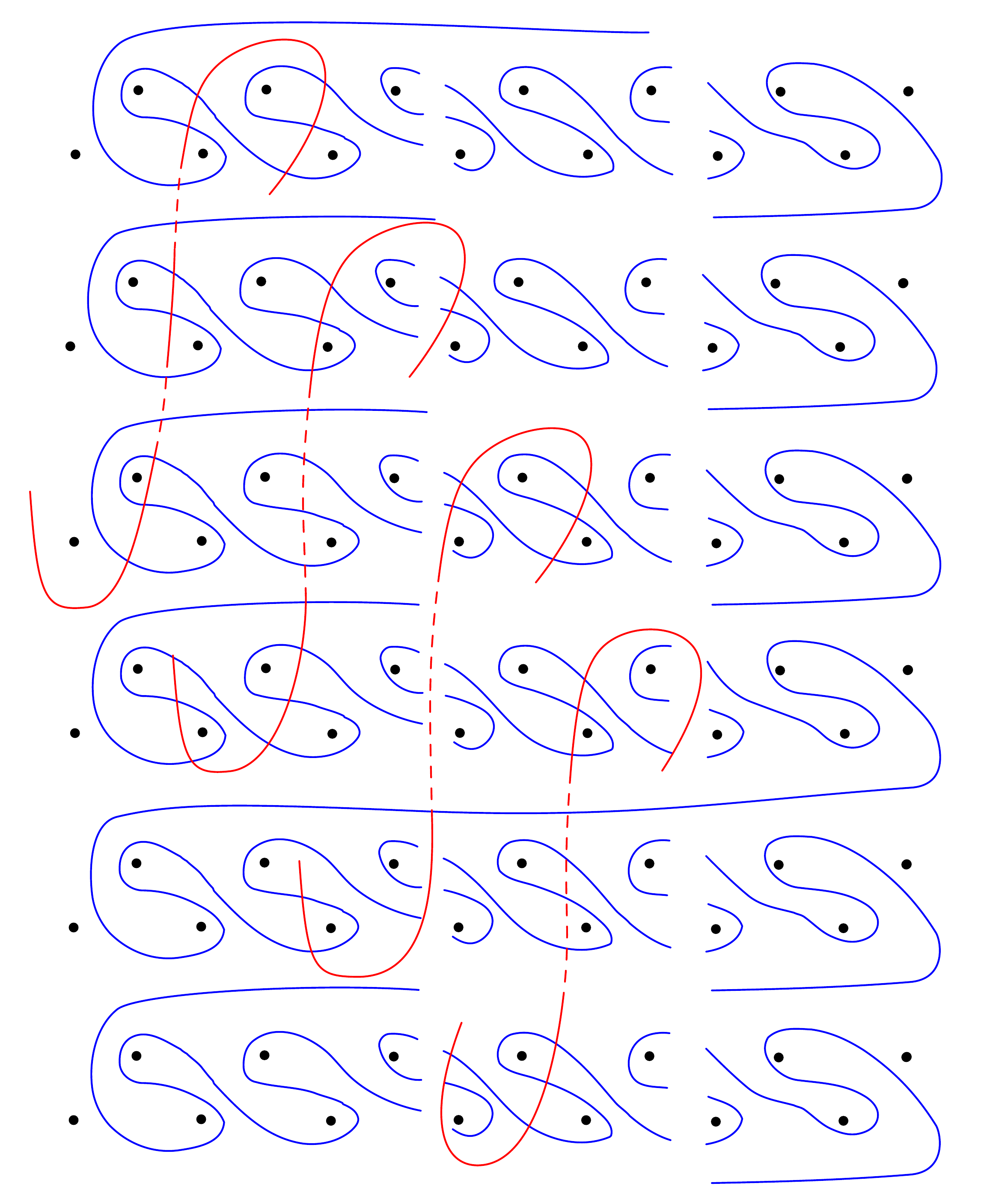}};
\node[rotate=90] at (11.6,8.5) {$\tau(K)$};
\node[rotate=90] at (11.6,12.5) {$\dfrac{j-3}{2}n$};
\node[rotate=90] at (11.6,14.5) {$n$};
\node[font=\tiny] at (6,6.65) {$\tikzcirc[]{2pt}$};
\node[font=\tiny] at (1.5, 9.6) {$\tikzcirc{2pt}$};
\node[font=\tiny] at (1.9, 11.25) {$\tikzcirc{2pt}$};
\node[font=\tiny] at (2, 11.85) {$\tikzcirc{2pt}$};
\node[font=\tiny] at (2.1, 13.6) {$\tikzcirc{2pt}$};
\node[font=\tiny] at (2.15, 14.1) {$\tikzcircle{2pt}$};
\node[font=\tiny] at (2.25, 14.75) {$\tikzcirclee{2pt}$};
\node[font=\tiny] at (2.05, 12.5) {$\tikzcirclee{2pt}$};
\node[font=\tiny] at (1.7, 10.3) {$\tikzcirclee{2pt}$};
\node[font=\tiny] at (5.8,6.8) {$c$};
\node[font=\tiny] at (1.05, 9.6) {$a_{2m+1}$};
\node[font=\tiny] at (1.6, 11.4) {$a_{2m}$};
\node[font=\tiny] at (2.1, 11.6) {$a_{2i+1}$};
\node[font=\tiny] at (1.8, 13.75) {$a_{2i}$};
\node[font=\tiny] at (2.35, 14) {$a_1$};
\node[font=\tiny] at (2.35, 14.9) {$b_1$};
\node[font=\tiny] at (2.4, 12.6) {$b_{2i+1}$};
\node[font=\tiny] at (2.3, 10.4) {$b_{2m+1}$};
\end{tikzpicture}
    \caption{Case $\tau(K)>0$, $\epsilon(K)=1$ and $0\leq n<2\tau(K)$ with $j$ odd.}\label{taupositiveepsilon1nless2taujodd}
\end{figure}

\begin{figure}[!tbp]
  \centering
  \begin{minipage}[b]{0.3\textwidth}
\begin{tikzpicture}\hspace{-.4in}
\node[scale=.6] at (0,0) {\begin{tikzcd}
	{} & \cdots & {a_{2m-2}} \\
	& {a_{2m-3}} && {a_{2m}} \\
	&& {a_{2m-1}} & {a_{2m+1}} & b
	\arrow["V"', from=1-3, to=2-2]
	\arrow["{V^{j+1}}"', from=1-3, to=3-3]
	\arrow["V"', from=2-4, to=3-3]
	\arrow["V", from=2-4, to=3-4]
	\arrow["U"', from=3-5, to=3-4]
\end{tikzcd}};
\end{tikzpicture}
\caption{Subcomplex carrying the cycle that generates $\HFhat(S^3)$ together with horizontal differentials from Figures \ref{taunegepsilonnegnpos} and \ref{tauposepsilonnegngreater2tau}.}\label{subcomplexcasetauposepsilon1nless2taunneg}
  \end{minipage}
  \hspace{1.5in}
  \begin{minipage}[b]{0.3\textwidth}
  \begin{tikzpicture}\space{-.2in}
\node[scale=.6] at (0,0)
{\begin{tikzcd}
	{a_1} & {b_1} \\
	& {a_3} & {b_3} \\
	{a_2} && {} \\
	& {a_4} & \dots & {a_{2m+1}} & {b_{2m+1}} \\
	&& {a_{2m}}
	\arrow["{V^{j+1}}"', from=1-1, to=3-1]
	\arrow["V"', from=2-2, to=3-1]
	\arrow["{V^{j+1}}"', from=2-2, to=4-2]
	\arrow["V"', from=4-4, to=5-3]
	\arrow["U"', from=1-2, to=1-1]
	\arrow["U"', from=2-3, to=2-2]
	\arrow["U"', from=4-5, to=4-4]
\end{tikzcd}};
\end{tikzpicture}
    \caption{Subcomplex carrying cycle that generates $\HFhat(S^3)$ and horizontal differentials from Figures \ref{tauposepsilononenneg}, \ref{taupositiveepsilon1nless2taujodd}, and \ref{taunegepsilon1nless2tau}.}\label{subcomplexcase7}
  \end{minipage}
\end{figure}

\textbf{Case 1.1a $\tau(K)\geq 0$ and $n\geq 2\tau(K)$}:
This case is shown in Figure \ref{taupositiveepsilononenbig2tau}. In that figure, the intersection points labelled $\{a_k\}_{k=1}^{2m+1}$ form a subcomplex that contains the $\F[V]$-free part of $\CFK_{\F[V]}(S^3,Q^{0,j}_n(K))$ and it is easy to see that the intersection point labelled $a_{2m+1}$ generates $\HFhat(S^3)$, obtained by setting $V=1$ in the above sub-complex. Then, the intersection point $a_{2m+1}$ is a distinguished element of some vertically simplified basis of $\CFK^{-}(K)$. Since the intersection point labelled $b$ satisfies $\partial^h(b)=U^2a_{2m+1}$, the cycle $a_{2m+1}$ is a boundary with respect to the horizontal differential, so it follows from \cite[Section 3]{Hom} that $\epsilon(Q^{0,j}_n(K))=1$. It remains to determine $\tau(Q^{0,j}_n(K))=A(a_{2m+1})$. By symmetry of the pairing diagram, we see that the intersection point $c$ satisfies $A(c)=0$, and then by \cite[Lemma 4.1]{Chen} $A(a_{2m+1})=A(a_{2m+1})-A(c)=\ell_{c,a_{2m+1}}\cdot \delta_{w,z}$. Now, to determine the quantity $\ell_{c,a_{2m+1}}\cdot \delta_{w,z}$ we see in Figure \ref{taupositiveepsilononenbig2tau} that the arc $\ell_{c, a_{2m+1}}$ traverses $\tau(K)+\dfrac{j-1}{2}n$ rows up the pairing diagram, and the Alexander grading changes by $j$ for each row we go up in the pairing diagram. Therefore $$A(a_{2m+1})=j\left(\tau(K)+\dfrac{j-1}{2}n\right)=j\tau(K)+\dfrac{j(j-1)}{2}n.$$

\textbf{Case 1.1b $\tau(K)>0$ and $n\leq 0<2\tau(K)$}: This case is shown in Figure \ref{tauposepsilononenneg}. In the pairing diagram, we can see that the subcomplex generated by the intersection points $\{a_k\}_{k=1}^{2m+1}$ together with the vertical differentials carries the $\F[V]$-free part of the $\HFK^{-}(S^3,Q^{0,j}_n(K))$. This subcomplex is shown in Figure \ref{subcomplexcasetauposepsilon1nless2taunneg} together with the horizontal differentials, and that the cycle $\sum_{k=0}^m a_{2k+1}$ survives in $\HFhat(S^3)$. Then this cycle forms the distinguished element of some vertically simplified basis. Further, we see that for each $k$, $\partial^h(b_{2k+1})=Ua_{2k+1}$, so we have $\partial^h(\sum_{k=0}^m b_{2k+1})=U\sum_{k=0}^m a_{2k+1}$. Therefore $\epsilon(Q^{0,j}_n(K))=1$. Now, it remains to determine $A(a_1)=\tau(Q^{0,j}_n(K))$. By symmetry, $A(c)=0$ and $$A(a_1)=A(a_1)-A(c)=\ell_{a_1,c}\cdot \delta_{w,z}=-j\left(\tau(K)+\dfrac{j-1}{2}|n|-2\tau(K)\right)+1,$$ since the Alexander grading changed by $-j$ for each row we go down in the pairing diagram. Simplifying, we see that $$\tau(Q^{0,j}_n(K))=j\tau(K)+\dfrac{j(j-1)}{2}n+1.$$

\textbf{Case 1.1c $\tau(K)>0$ and $0\leq n<2\tau(K)$}: This case is shown in Figure \ref{taupositiveepsilon1nless2taujodd}. The analysis here is exactly as in the previous case. The subcomplex $\{a_k\}$ carries the $\F[V]$-free part of the homology $\HFK^{-}(S^3,Q^{0,j}_n(K))$, and the cycle $\sum_{k=0}^m a_{2k+1}$ survives in $\HFhat(S^3)$, so can be taken to be the distinguished elements of a vertically simplified basis. Further, we have $\partial^h(\sum_{k=0}^m b_{2k+1})=U\sum_{k=0}^m a_{2k+1}$, so just like in the previous case it follows that $\epsilon(Q^{0,j}_n(K))=1$ It remains to determine $A(a_1)$: 
Counting the number of rows between $a_1$ and $c$ in the pairing diagram, we find that $$\tau(Q^{0,j}_n(K))=A(a_1)-A(c)=j\left(\tau(K)+\dfrac{j-1}{2}n\right)+1=j\tau(K)+\dfrac{j(j-1)}{2}n+1.$$

This ends the analysis of the case $\epsilon(K)=1$ and $\tau(K)$ non-negative. Next, we move on to the case $\epsilon(K)=1$ and $\tau(K)$ non-positive. 

\textbf{Case 1.2a $\tau(K)\leq 0$, $\epsilon(K)=1$ and $n< 2\tau(K)$}

In this case, the pairing diagram is shown in Figure \ref{taunegepsilon1nless2tau}. The intersection points labelled $\{a_k\}_{k=1}^{2m+1}$ generate the free part of $\CFK_{\F[V]}(Q^{0,j}_n(K))$, the cycle $\sum_{k=0}^m a_{2k+1}$ is the cycle that survives in $\HFhat(S^3)$, and $\tau(Q^{0,j}_n(K))=A(a_1)$. The intersection points $\{b_{2k+1}\}_{k=0}^m$ satisfy $\partial^h(\sum_{k=0}^m b_{2k+1})=U\sum_{k=0}^m a_{2k+1}$, so $\epsilon(Q^{0,j}_n(K))=1$. Exactly in the previous cases, we find that

$$\tau(Q^{0,j}_n(K))=A(a_1)=-j\left(-\tau(K)+\dfrac{j-1}{2}|n|\right)+1=j\tau(K)+\dfrac{j(j-1)}{2}n+1.$$

\begin{figure}[!tbp]
  \centering
  \begin{minipage}[b]{0.3\textwidth}
  \begin{tikzpicture}
  \hspace{-.5in}
\node[anchor=south west,inner sep=0] at (0,0)    {\includegraphics[width=1.2\textwidth]{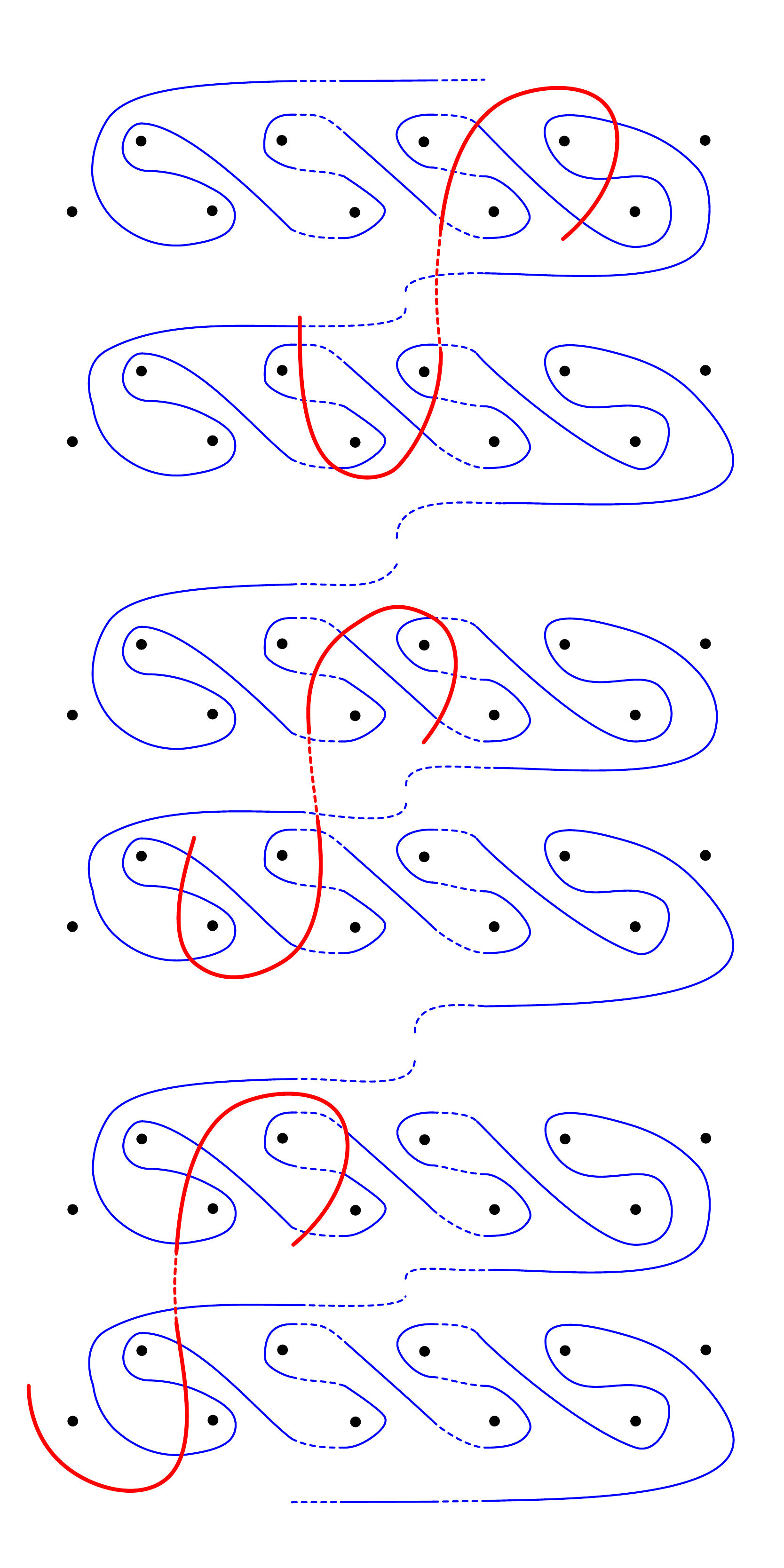}};
\draw[very thin] (0,.7) to (0,3.1);
\draw[very thin] (-.1,.7) to (.1,.7);
\draw[very thin] (-.1,3.1) to (.1,3.1);

\draw[very thin] (5,3.1) to (5,3.9);
\draw[very thin] (4.9,3.1) to (5.1,3.1);
\draw[very thin] (4.9,3.9) to (5.1,3.9);
\draw[very thin] (0,3.1) to (0,6.05);
\draw[very thin] (-.1,3.1) to (.1,3.1);
\draw[very thin] (-.1,6.05) to (.1,6.05);
\draw[very thin] (5,6.05) to (5,6.5);
\draw[very thin] (4.9,6.05) to (5.1,6.05);
\draw[very thin] (4.9,6.5) to (5.1,6.5);

\node[font=\tiny] at (1.2,.75) {$\tikzcirc{2pt}$};
\node[font=\tiny] at (1.17,1.65) {$\tikzcirc{2pt}$};
\node[font=\tiny] at (1.2,2.1) {$\tikzcircle{2pt}$};
\node[font=\tiny] at (1.2,2.5) {$\tikzcirclee{2pt}$};
\node[font=\tiny] at (1.2,1.15) {$\tikzcirclee{2pt}$};

\node[font=\tiny] at (1.65,.65) {$a_{2m+1}$};
\node[font=\tiny] at (.9,1.75) {$a_{2m}$};
\node[font=\tiny] at (1.4,1.9) {$a_1$};
\node[font=\tiny] at (1.2,2.7) {$b_1$};
\node[font=\tiny] at (1.6,1.2) {$b_{2m+1}$};
\node[font=\tiny] at (2.5,6.7) {$c$};

\node[font=\tiny] at (2.5,6.5) {$\tikzcirc[]{2pt}$};

\node[font=\tiny,rotate=90] at (5.2,3.5) {$2|\tau(K)|$};
\node[font=\tiny] at (5.4,6.3) {$|\tau(K)|$};

\node[font=\tiny,rotate=90] at (-.2,2) {$2\tau(K)-n$};
\node[font=\tiny,rotate=90] at (-.4,4.5) {$\dfrac{j-1}{2}|n|$};
\end{tikzpicture}\vspace{1in}
    \caption{$\tau(K)\leq 0$ $\epsilon(K)=1$ and $n\leq2\tau(K)$}\label{taunegepsilon1nless2tau}
  \end{minipage}
  \hspace{2in}
  \begin{minipage}[b]{0.3\textwidth}
  \begin{tikzpicture}
  \hspace{-.5in}
\node[anchor=south west,inner sep=0] at (0,0)    {\includegraphics[width=1.2\textwidth]{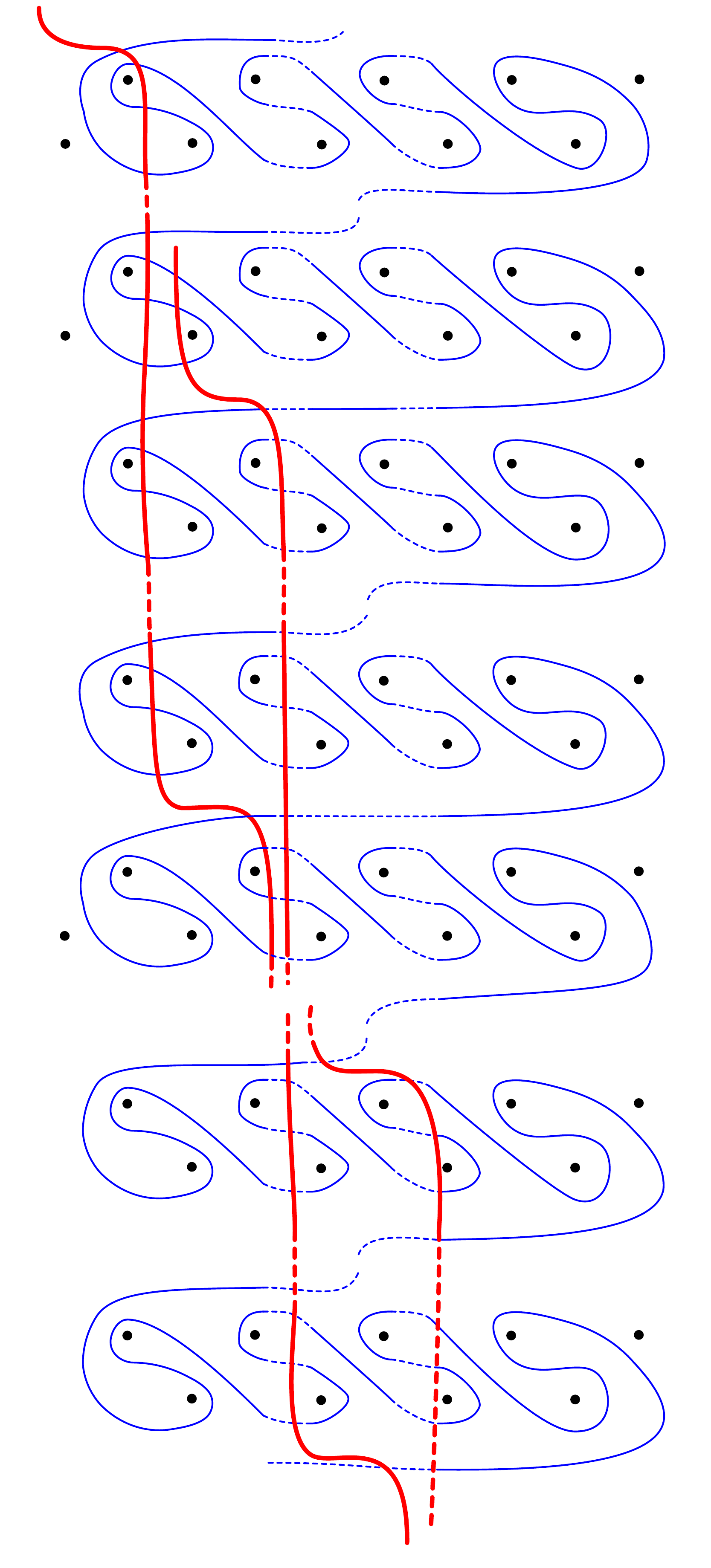}};
\node at (.9,10.75) {$\tikzcirc{2pt}$};
\node at (1,9.9) {$\tikzcirc{2pt}$};
\node at (1,9.45) {$\tikzcirc{2pt}$};
\node at (1,8.55) {$\tikzcirc{2pt}$};
\node at (1,8.1) {$\tikzcirc{2pt}$};
\node at (1.05,7.16) {$\tikzcirc{2pt}$};
\node at (1.1,6.55) {$\tikzcirc{2pt}$};
\node at (1.16,5.65) {$\tikzcirc{2pt}$};
\node at (1.75,5.3) {$\tikzcircle{2pt}$};
\node at (1.9,5.1) {$\tikzcirclee{2pt}$};
\node at (2.5,2.2) {$\tikzcirc[]{2pt}$};
\node at (.9,10.75) {$\tikzcirc{2pt}$};
\node at (1,9.9) {$\tikzcirc{2pt}$};
\node at (1,9.45) {$\tikzcirc{2pt}$};
\node at (1,8.55) {$\tikzcirc{2pt}$};
\node at (1,8.1) {$\tikzcirc[]{2pt}$};
\node at (1,8.1) {$\tikzcirc{2pt}$};

\node[font=\tiny] at (.9,11) {$a_{2m+1}$};
\node[font=\tiny] at (.9,9.8) {$a_{2m}$};
\node[font=\tiny] at (.65,9.55) {$a_{2i+1}$};
\node[font=\tiny] at (.8,8.45) {$a_{2i}$};
\node[font=\tiny] at (.6,8.1) {$a_{2i-1}$};
\node[font=\tiny] at (.8,7.06) {$a_{2i-2}$};
\node[font=\tiny] at (.9,6.65) {$a_3$};
\node[font=\tiny] at (1,5.5) {$a_2$};
\node[font=\tiny] at (1.75,5.45) {$a_1$};
\node[font=\tiny] at (2,5.2) {$b$};
\node[font=\tiny, rotate=90] at (-.1,2.9) {$|\tau(K)|$};
\node[font=\tiny, rotate=90] at (-.2,4.5) {$\dfrac{j-1}{2}n-2|\tau(K)|$};
\node[font=\tiny, rotate=90] at (-.2, 9.6) {$n$};
\node[font=\tiny, rotate=90] at (5.2, 6.5) {$2|\tau(K)|$};

\draw[very thin] (.1,2) to (.1,3.6); 
\draw[very thin] (0,2) to (.2,2); 
\draw[very thin] (0,3.6) to (.2,3.6); 
\draw[very thin] (.1,3.6) to (.1,5.3); 
\draw[very thin] (0,5.3) to (.2,5.3);

\draw[very thin] (5,5.3) to (5,8.3); 
\draw[very thin] (4.9,5.3) to (5.1,5.3); 
\draw[very thin] (4.9,8.3) to (5.1,8.3);
\draw[very thin] (.1,8.3) to (.1,10.9); 
\draw[very thin] (0,8.3) to (.2,8.3); 
\draw[very thin] (0,10.9) to (.2,10.9); 
\end{tikzpicture}\vspace{.8in}
    \caption{$\tau(K)\leq 0$ $\epsilon(K)=1$ and $n\geq 0$}\label{taunegepsilononenpos}
  \end{minipage}
\end{figure}

\textbf{Case 1.2b $\tau(K)\leq 0$, $\epsilon(K)=1$ and $n\geq 0\geq 2\tau(K)$}
In this case, the pairing diagram has the form shown in Figure \ref{taunegepsilononenpos}. In this case the $\F[V]$ free part of the homology is generated by the intersection points $\{a_k\}_{k=1}^{2m+1}$. Further, the intersection point $a_1$ generated $\HFhat(S^3)$. Just as above, the intersection point $b$ satisfies $\partial^h(b)=U^2a_{1}$ and hence $\epsilon(Q^{0,j}_n(K))=1$. Furthermore $A(a_1)=\tau(Q^{0,j}_n(K))$. Inspecting the pairing diagram we find that 
$$A(a_{1})-A(c)=j\left(\tau(K)+\dfrac{j-1}{2}n\right)=j\tau(K)+\dfrac{j(j-1)}{2}n.$$

\begin{figure}[!tbp]
  \centering
  \begin{minipage}[b]{0.3\textwidth}
\begin{tikzpicture}\hspace{-.4in}
\node[scale=.6] at (0,0) {\begin{tikzcd}
	\cdots & {a_4} \\
	&& {a_2} \\
	& {a_3} \\
	&& {a_1} && b
	\arrow["{V^{j+1}}"', from=1-2, to=3-2]
	\arrow["V"', from=2-3, to=3-2]
	\arrow["{V^{j+1}}", from=2-3, to=4-3]
	\arrow["{U^2}"', from=4-5, to=4-3]
\end{tikzcd}};
\end{tikzpicture}
\caption{Subcomplex carrying the cycle that generates $\HFhat(S^3)$ corresponding to the cases in Figures \ref{taupositiveepsilononenbig2tau}, \ref{taunegepsilononenpos}, and \ref{taunegepsilon1nneg}}\label{subcomplexcase10}
  \end{minipage}
  \hspace{1.5in}
  \begin{minipage}[b]{0.3\textwidth}
  \begin{tikzpicture}\space{-.2in}
\node[scale=.6] at (0,0)
{\begin{tikzcd}
	{a_1} & {b_1} \\
	& {a_3} & {b_3} \\
	{a_2} && \cdots \\
	& {a_4}
	\arrow["{V^j}"', from=1-1, to=3-1]
	\arrow["U"', from=1-2, to=1-1]
	\arrow["V"', from=2-2, to=3-1]
	\arrow["{V^{j+1}}", from=2-2, to=4-2]
	\arrow["U"', from=2-3, to=2-2]
\end{tikzcd}};
\end{tikzpicture}
    \caption{Subcomplex carrying cycle that generates $\HFhat(S^3)$ and horizontal differentials from Figures \ref{tauposepsilonminus1nneg} and \ref{taunegepsilonminus1nleq2tau}}\label{subcomplex4}
  \end{minipage}
\end{figure}

\textbf{Case 1.2c $\tau(K)\leq 0$, $\epsilon(K)=1$ and $0\geq n\geq 2\tau(K)$}
The pairing diagram for this case is shown in Figure \ref{taunegepsilon1nneg}. The intersection points $\{a_k\}_{k=0}^{2m+1}$ generate the free part of the homology, and the intersection point $a_1$ generates $\HFhat(S^3)$. In the pairing diagram, the intersection point labelled $b$ satisfied $\partial^h(b)=U^2a_1$, so $\epsilon(Q^{0,j}_n(K))=1$. Further, we compute $$\tau(Q^{0,j}_n(K))=A(a_1)=-j\left(|\tau(K)|+\dfrac{j-1}{2}|n|\right)=j\tau(K)+\dfrac{j(j-1)}{2}n.$$

That finishes the cases where $\epsilon(K)=1$. \\
\begin{figure}[!tbp]
  \centering
    \begin{minipage}[b]{0.3\textwidth}
  \begin{tikzpicture}
\hspace{-.5in}

\node[anchor=south west,inner sep=0] at (0,0)    {\includegraphics[width=1.3\textwidth]{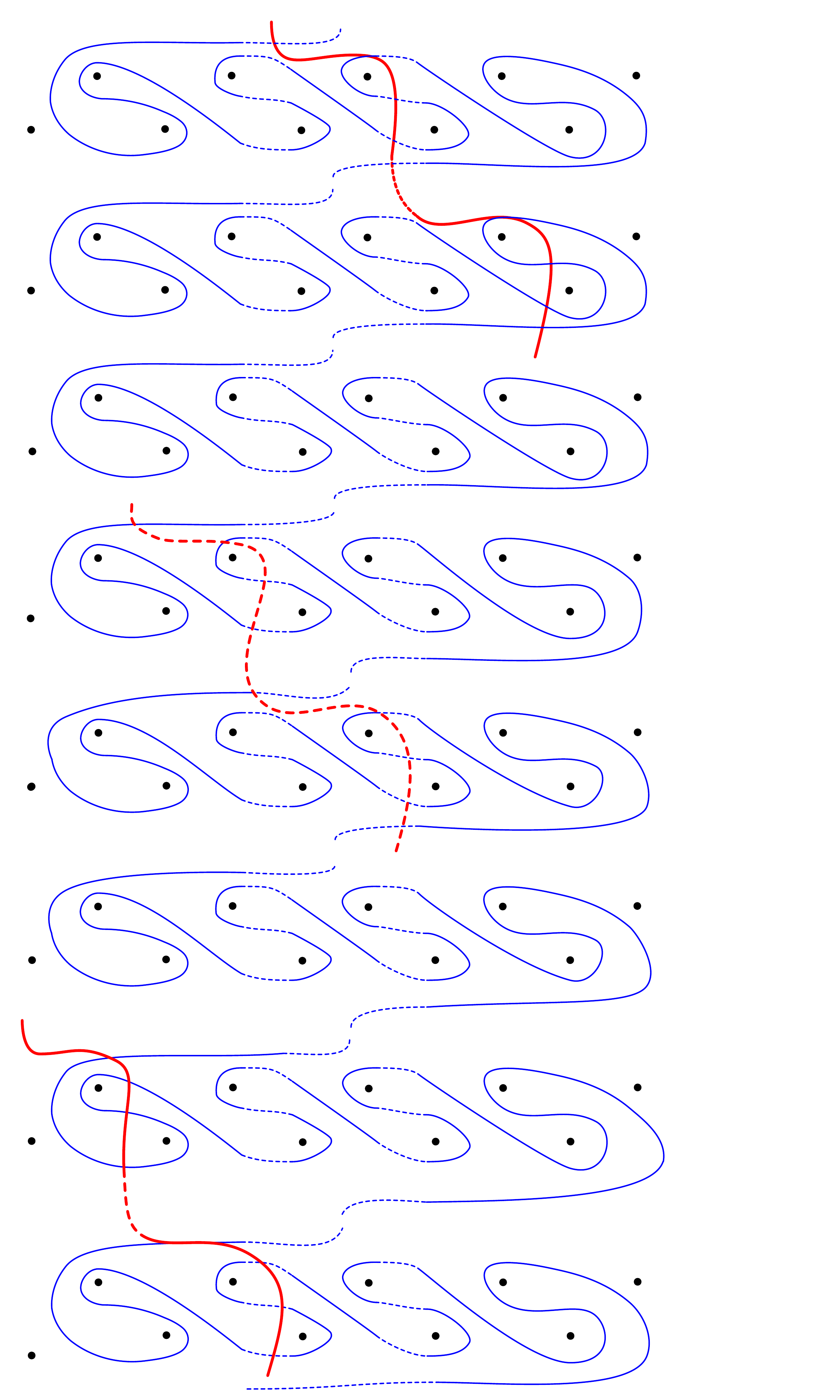}};
\node[font=\tiny] at (1,1) {$\tikzcircle{2pt}$};
\node[font=\tiny] at (.75,1.5) {$\tikzcirc{2pt}$};
\node[font=\tiny] at (.75,2.2) {$\tikzcirc{2pt}$};
\node[font=\tiny] at (1.7,.85) {$\tikzcirclee{2pt}$};
\node[font=\tiny] at (2.2, 6.85) {$\tikzcirc[]{2pt}$};
\draw (0,7.9) to (0,5.2);
\draw (0,1.2) to (0,5.2);
\draw (-.1,1.2) to (.1,1.2);
\draw (-.1,7.9) to (.1,7.9);
\draw (-.1,5.2) to (.1,5.2);

\node[font=\tiny, rotate=90] at (-.4,3) {$\dfrac{j-1}{2}|n|$};
\node[font=\tiny, rotate=90] at (-.3,6.5) {$|\tau(K)|$};
\node[font=\tiny] at (1.15,1.15) {$a_1$};
\node[font=\tiny] at (.9,1.35) {$a_{2m}$};
\node[font=\tiny] at (.75,2.4) {$a_{2m+1}$};
\node[font=\tiny] at (1.8,1) {$b$};
\node[font=\tiny] at (2.2, 7.1) {$c$};
\end{tikzpicture}\vspace{.5in}
    \caption{The pairing diagram when $\tau(K)<0$ $\epsilon(K)=1$ and $0>n>2\tau(K)$}\label{taunegepsilon1nneg}
  \end{minipage}
  \hspace{1in}
  \begin{minipage}[b]{0.3\textwidth}
  \begin{tikzpicture}
  \hspace{-.5in}
\node[anchor=south west,inner sep=0] at (0,0)    {\includegraphics[width=1.2\textwidth]{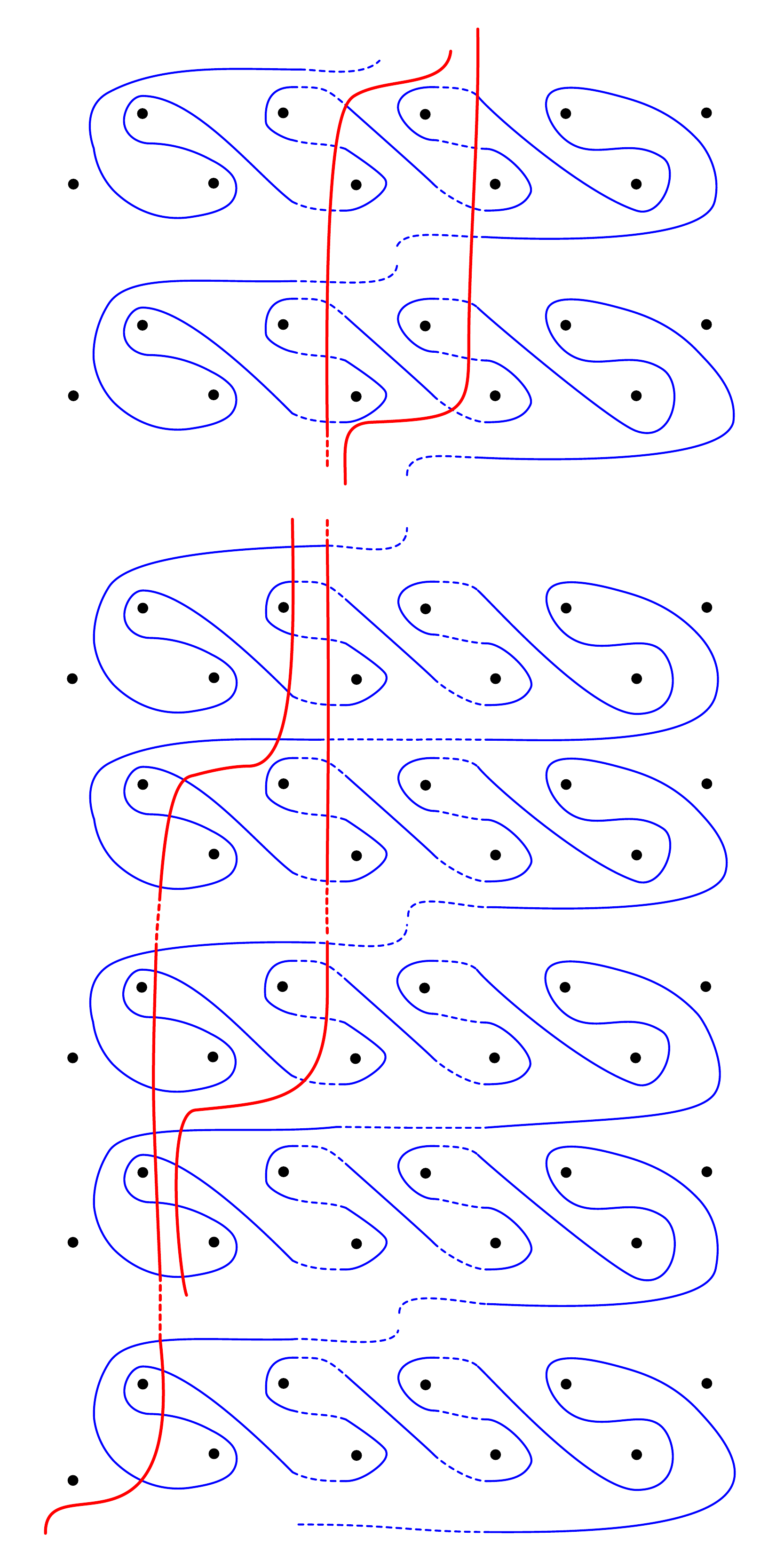}};
\node[font=\tiny] at (1.1,4.8) {$\tikzcirclee{2pt}$};
\node[font=\tiny] at (1,3.55) {$\tikzcirclee{2pt}$};
\node[font=\tiny] at (1,2.3) {$\tikzcirclee{2pt}$};
\node[font=\tiny] at (1.05,1) {$\tikzcirclee{2pt}$};

\node[font=\tiny] at (1.3,4.85) {$b_3$};
\node[font=\tiny] at (1.4,3.6) {$b_{2i-1}$};
\node[font=\tiny] at (1.4,2.4) {$b_{2i+1}$};
\node[font=\tiny] at (1.5,1) {$b_{2m+1}$};

\node[font=\tiny] at (.9,.6) {$\tikzcirc{2pt}$};
\node[font=\tiny] at (1,1.45) {$\tikzcirc{2pt}$};
\node[font=\tiny] at (1,1.9) {$\tikzcirc{2pt}$};
\node[font=\tiny] at (1,2.8) {$\tikzcirc{2pt}$};
\node[font=\tiny] at (1,3.1) {$\tikzcirc{2pt}$};
\node[font=\tiny] at (1,4) {$\tikzcirc{2pt}$};
\node[font=\tiny] at (1,4.4) {$\tikzcirc{2pt}$};
\node[font=\tiny] at (1.8,5.3) {$\tikzcirc{2pt}$};
\node[font=\tiny] at (1.9,5.6) {$\tikzcircle{2pt}$};
\node[font=\tiny] at (1.9,6) {$\tikzcirclee{2pt}$};
\node[font=\tiny] at (.9,.6) {$\tikzcirc{2pt}$};
\node[font=\tiny] at (.9,.6) {$\tikzcirc{2pt}$};
\node[font=\tiny] at (.9,.6) {$\tikzcirc{2pt}$};
\node[font=\tiny] at (.9,.6) {$\tikzcirc{2pt}$};
\node[font=\tiny] at (.9,.6) {$\tikzcirc{2pt}$};
\node[font=\tiny] at (2.5,8.4) {$\tikzcirc{2pt}$};
\draw[very thin] (0,7.1) to (0,8.4);
\draw[very thin] (-.1,7.1) to (.1,7.1);
\draw[very thin] (-.1,8.4) to (.1,8.4);
\draw[very thin] (0,7.1) to (0,5.4);
\draw[very thin] (-.1,7.1) to (.1,7.1);
\draw[very thin] (-.1,5.4) to (.1,5.4);
\draw[very thin] (5.1,5.4) to (5.1,6.4);
\draw[very thin] (5,5.4) to (5.2,5.4);
\draw[very thin] (5,6.4) to (5.2,6.4);

\node[font=\tiny, rotate=90] at (-.2,7.9) {$\tau(K)$};
\node[font=\tiny, rotate=90] at (-.4,6.2) {$\dfrac{j-1}{2}|n|-2\tau(K)$};
\node[font=\tiny, rotate=90] at (5.25,5.7) {$1$};
\node[font=\tiny] at (.9,.4) {$a_{2m+1}$};
\node[font=\tiny] at (.7,1.5) {$a_{2m}$};
\node[font=\tiny] at (.7,1.8) {$a_{2i+1}$};
\node[font=\tiny] at (.6,2.8) {$a_{2i}$};
\node[font=\tiny] at (.6,3.1) {$a_{2i-1}$};
\node[font=\tiny] at (.6,4) {$a_{2i-2}$};
\node[font=\tiny] at (.75,4.4) {$a_3$};
\node[font=\tiny] at (1.6,5.4) {$a_2$};
\node[font=\tiny] at (1.7,5.6) {$a_1$};
\node[font=\tiny] at (2,5.85) {$b_1$};
\node[font=\tiny] at (2.5,8.55) {$c$};
\end{tikzpicture}\vspace{.4in}
    \caption{The pairing diagram when $\tau(K)>0$ $\epsilon(K)=-1$ and $n<0<2\tau(K)$ and $j$ odd}\label{tauposepsilonminus1nneg}
  \end{minipage}
\end{figure}

\textbf{Case 2 $\epsilon(K)=-1$}: As in the case $\epsilon(K)=1$, we distinguish between various sub-cases depending on the sign of $\tau(K)$ and the value of $n$ relative to $2\tau(K)$\\
\textbf{Case 2.1a $\tau(K)\geq 0$, $\epsilon(K)=-1$ and $n\leq 0<2\tau(K)$}

This case is shown in Figure \ref{tauposepsilonminus1nneg}. The intersection points $\{a_k\}_{k=1}^{2m+1}$ generate a subcomplex with respect to the vertical differentials and carries the $\F[V]$-free part of the homology of $\CFK_{\F[V]}(S^3,Q^{0,j}_n(K))$. Setting $V=1$ we see that the cycle $\sum_{k=0}^m a_{2k+1}$ generates $\HFhat(S^3)$. The intersection points $\{b_{2k+1}\}_{k=0}^m$ satisfy $\partial^h(\sum_{k=0}^m b_{2k+1})=U(\sum_{k=0}^m a_{2k+1})$, so $\epsilon(Q^{0,j}_n(K))=1$. We determine $\tau(Q^{0,j}_n(K))=A(a_1)$ from the pairing diagram and find

$$A(a_1)=-j\left(\tau(K)+\dfrac{j-1}{2}|n|-2\tau(K)-1\right)=j(\tau(K)+1)+\dfrac{j(j-1)}{2}n.$$

Note: The case $\tau(K)>0$, $\epsilon(K)=-1$ and $0<n<2\tau(K)$ is similar and left to the reader. 

\textbf{Case 2.1b $\tau(K)\geq 0$, $\epsilon(K)=-1$ and $n \geq 2\tau(K)$}

This case is shown in Figure \ref{tauposepsilonnegngreater2tau}. In this case subcomplex generated by the intersection points $\{a_k\}$ generate the $\F[V]$-free part of the homology. We see that the intersection point labelled $a_{1}$ generates $\HFhat(S^3)$ and that $\partial^h(b)=Ua_{1}$. Therefore $\epsilon(Q^{0,j}_n(K))=1$. Now, it is easy to see from the pairing diagram that $A(a_1)=A(a_3)$, and $$\tau(Q^{0,j}_n(K))=A(a_{1})=j\left(\tau(K)+\dfrac{j-1}{2}n+1\right)=j(\tau(K)+1)+\dfrac{j(j-1)}{2}n.$$

\begin{figure}[!tbp]\vspace{-1in}
  \begin{tikzpicture}\vspace{-2in}
\centering
\node[anchor=south west,inner sep=0] at (0,0)    {\includegraphics[scale=.3]{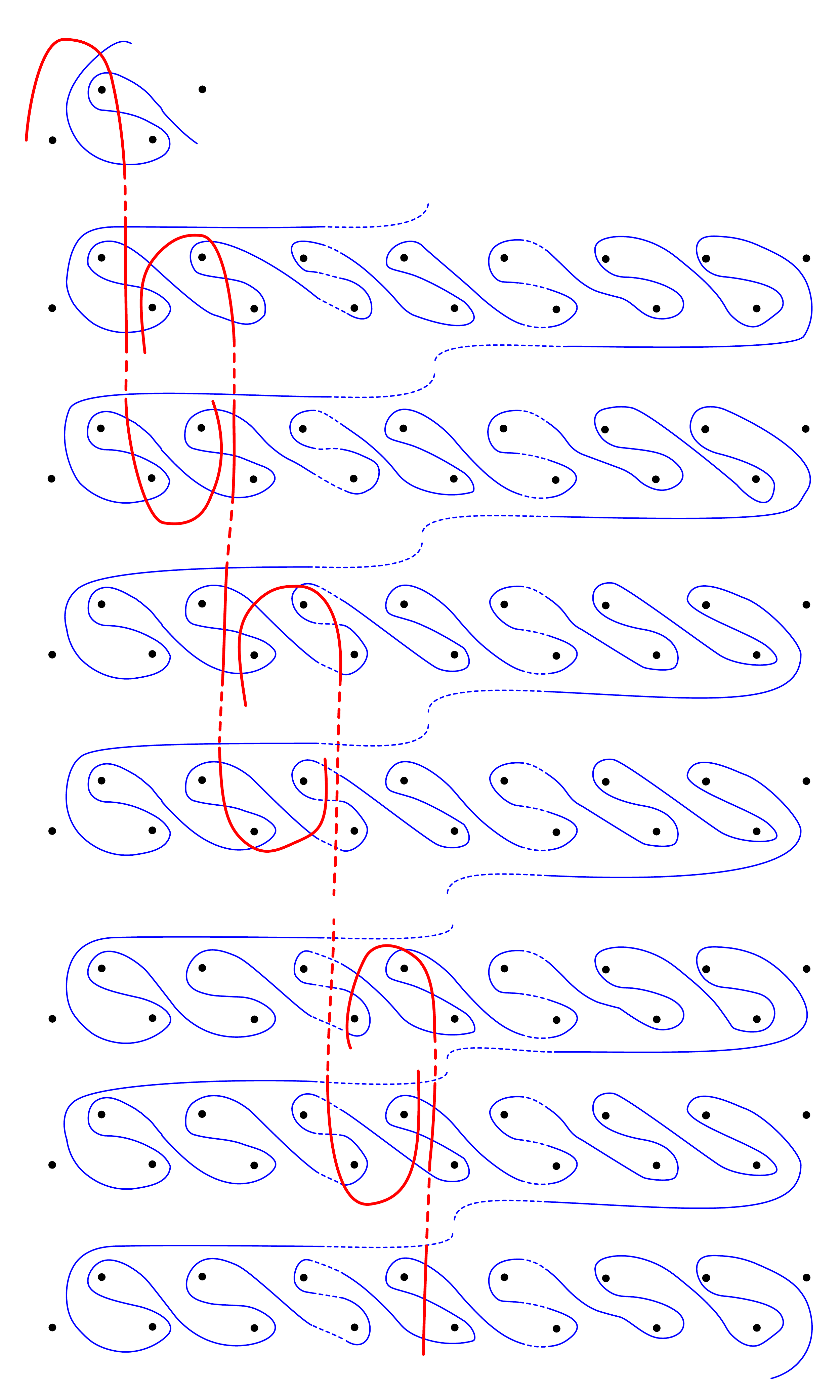}};
\node[font=\tiny] at (6.4,1.5) {$\tikzcirc[]{2pt}$};
\node[font=\tiny] at (2.2,13.6) {$\tikzcirc{2pt}$};
\node[font=\tiny] at (1.9,15.2) {$\tikzcirc{2pt}$};
\node[font=\tiny] at (1.9,16.2) {$\tikzcirc{2pt}$};
\node[font=\tiny] at (1.9,17.8) {$\tikzcirc{2pt}$};
\node[font=\tiny] at (1.9,18.7) {$\tikzcirc{2pt}$};
\node[font=\tiny] at (1.5,20.4) {$\tikzcirc{2pt}$};
\node[font=\tiny] at (3.2,13.75) {$\tikzcircle{2pt}$};
\node[font=\tiny] at (3.3,14.35) {$\tikzcirclee{2pt}$};
\node[font=\small, rotate=90] at (-.3,4) {$\dfrac{n-1}{2}-\tau(K)$};
\node[font=\small, rotate=90] at (-.3,9) {$(\dfrac{j}{2}-2)n$};
\node[font=\small, rotate=90] at (12.7,13) {$n+2\tau(K)+1$};

\draw[very thin] (.1,1.5) to (.1,6.2);
\draw[very thin] (0,1.5) to (.2,1.5);
\draw[very thin] (0,6.2) to (.2,6.2);
\draw[very thin] (.1,6.2) to (.1,11.8);
\draw[very thin] (0,11.8) to (.2,11.8);
\draw[very thin] (12.5,11.8) to (12.5,14.4);
\draw[very thin] (12.4,11.8) to (12.6,11.8);
\draw[very thin] (12.4,14.4) to (12.6,14.4);
\node[font=\tiny] at (6.5,1.6) {$c$};
\node[font=\tiny] at (2,13.45) {$a_{2m}$};
\node[font=\tiny] at (1.5,15.35) {$a_{2m-1}$};
\node[font=\tiny] at (1.6,16.05) {$a_{2i+1}$};
\node[font=\tiny] at (1.65,17.9) {$a_{2i}$};
\node[font=\tiny] at (1.7,18.55) {$a_2$};
\node[font=\tiny] at (1.5,20.6) {$a_1$};
\node[font=\tiny] at (3.3,13.5) {$a_{2m+1}$};
\node[font=\tiny] at (3.4,14.5) {$b$};
\vspace{-1in}\end{tikzpicture}
    \caption{Case $\tau(K)<0$, $\epsilon(K)=-1$ and $n>0>2\tau(K)$ with $j$ even}\label{taunegepsilonnegnpos}
\end{figure}

\textbf{Case 2.2a $\tau(K)\leq 0$, $\epsilon(K)=-1$ and $n\geq 0\geq 2\tau(K)$}
This case is shown in Figure \ref{taunegepsilonnegnpos} where the intersection points $\{a_k\}$ form a subcomplex that carries the $\F[V]$-free part of the homology, and the cycle $a_{2m+1}$ generates $\HFhat(S^3)$. Considering disks that cross the $w$-basepoint, we see that $\partial^h(b)=Ua_{2m+1}$ and so $\epsilon(Q^{0,j}_n(K))=1$. It remains to determine $\tau(Q^{0,j}_n(K))=A(a_{2m+1}$. This is similar to the previous cases, but we point out what happens in the case when $j$ is even and $n$ is odd. In this case the central intersection point $c$ with $A(c)=0$ is shown in Figure \ref{taunegepsilonnegnpos}. Since $j$ is even, the central intersection point occurs along the unstable chain, as in the proof of Theorem \ref{genusnontrivial}. Just as in the previous cases, we find that $$A(a_{2m+1})-A(c)=j\left(\dfrac{n-1}{2}-\tau(K)+(\dfrac{j}{2}-2\right)n+n+2\tau(K)+1)+\dfrac{j}{2}$$

Simplifying, we see that $$\tau(Q^{0,j}_n(K))=A(a_{2m+1})=j(\tau(K)+1)+\dfrac{j(j-1)}{2}n.$$

\textbf{Case $\tau(K)\leq 0$ $\epsilon(K)=-1$ and $0>n>2\tau(K)$}\\
This case is similar to the previous cases and is left to the reader.

\textbf{Case $\tau(K)\leq 0$ $\epsilon(K)=-1$ and $n\leq2\tau(K)$}
The pairing diagram for this case is shown in Figure \ref{taunegepsilonminus1nleq2tau}. In that figure we see that the intersection points labelled $\{a_k\}$ generate the free part of the homology of $\CFKhat_{\F[V]}(S^3,Q^{0,j}_n(K))$ and when we set $V=1$ the cycle $\sum_{k=0}^m a_{2k+1}$ generates $\HFhat(S^3)$. The intersection points $\{b_{2k+1}\}$ are such that $\partial^h(\sum_k b_{2k+1})=U\sum_{k=0}^m a_{2k+1}$, so $\epsilon(Q^{0,j}_n(K))=1$. It remains to determine $\tau(Q^{0,j}_n(K))=A(a_{1})$. Inspecting the pairing diagram, we find that 
$$\tau(Q^{0,j}_n(K))=j(\tau(K)+1)+\dfrac{j(j-1)}{2}n$$\qedhere

\end{proof}

\begin{figure}[!tbp]
  \centering
\begin{minipage}[b]{0.3\textwidth}
  \begin{tikzpicture}
\hspace{-.3in}
  \draw (.2,3.6) to (.2,4.2);
  \draw (.1,3.6) to (.3,3.6);
  \draw (.1,4.2) to (.3,4.2);
  \draw (.2,6.5) to (.2,4.2);
   \draw (.2,6.5) to (.2,8.3);
   \draw (.1,8.3) to (.3,8.3);
\node[anchor=south west,inner sep=0] at (0,0)    {\includegraphics[width=1.2\textwidth]{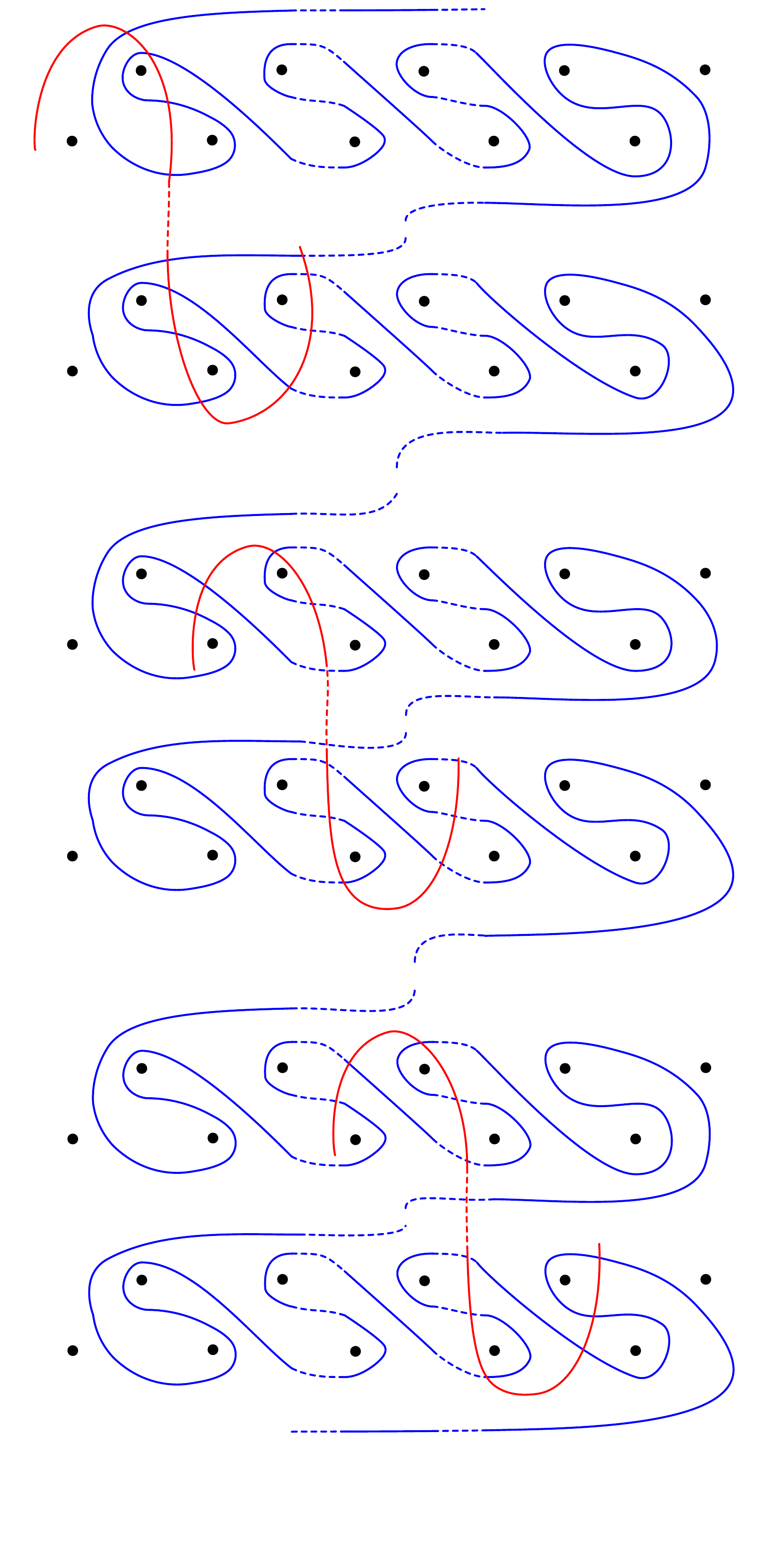}};
\node[font=\tiny] at (2.7,3.75) {$\tikzcirc[]{2pt}$};
\node[font=\tiny] at (.85,9.85) {$\tikzcirc{2pt}$};
\node[font=\tiny] at (1.15,8.95) {$\tikzcirc{2pt}$};
\node[font=\tiny] at (1.15,8.4) {$\tikzcirc{2pt}$};
\node[font=\tiny] at (1.3,7.5) {$\tikzcirc{2pt}$};
\node[font=\tiny] at (1.84,7.6) {$\tikzcircle{2pt}$};
\node[font=\tiny] at (2,7.9) {$\tikzcirclee{2pt}$};
\node[font=\tiny, rotate=90] at (0,3.9) {$\tau(K)$};
\node[font=\tiny, rotate=90] at (-.2,6.3) {$\dfrac{j-1}{2}n+1$};
\node[font=\tiny] at (2.9,3.75) {$c$};
\node[font=\tiny] at (2,7.4) {$a_1$};
\node[font=\tiny] at (2.2,7.90) {$b$};
\node[font=\tiny] at (1.2,7.3) {$a_2$};
\node[font=\tiny] at (.9,8.4) {$a_3$};
\node[font=\tiny] at (.9,8.9) {$a_{2m}$};
\node[font=\tiny] at (.8,10) {$a_{2m+1}$};

\end{tikzpicture}
    \caption{The pairing diagram when $\tau(K)>0$ $\epsilon(K)=-1$ and $n\geq 2\tau(K)$ and $j$ odd.}\label{tauposepsilonnegngreater2tau}
  \end{minipage}
  \hspace{2in}
  \begin{minipage}[b]{0.3\textwidth}
  \begin{tikzpicture}
  \hspace{-.2in}
\node[anchor=south west,inner sep=0] at (0,0)    {\includegraphics[width=1.2\textwidth]{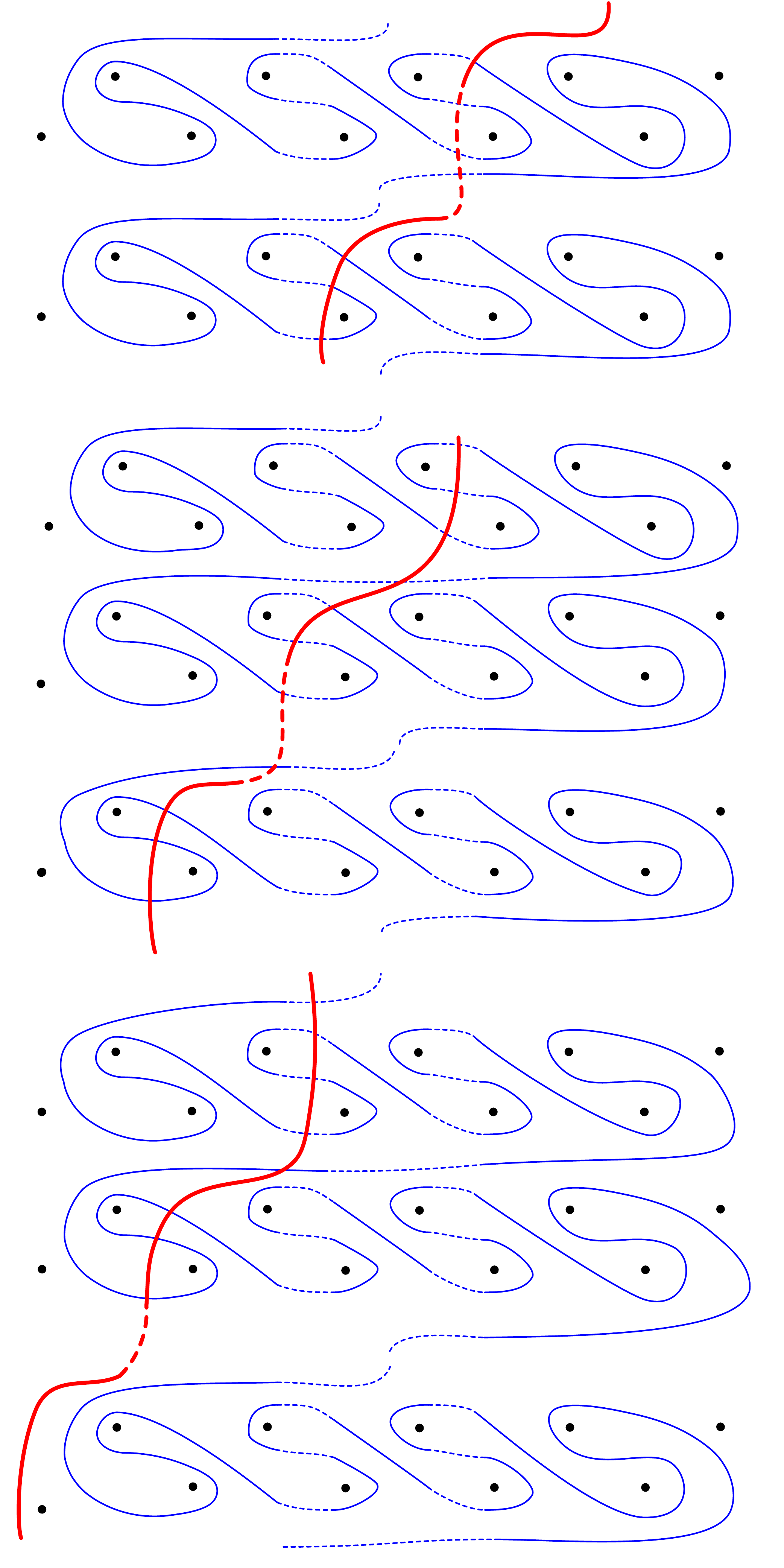}};
\node[font=\tiny] at (.95,1.75) {$\tikzcirc{2pt}$};
\node[font=\tiny] at (1.8,2.55) {$\tikzcirc{2pt}$};
\node[font=\tiny] at (2,2.8) {$\tikzcircle{2pt}$};
\node[font=\tiny] at (2,3.2) {$\tikzcirclee{2pt}$};
\node[font=\tiny] at (2.5,7.5) {$\tikzcirc[]{2pt}$};
\node[font=\tiny] at (1.05,2.1) {$\tikzcirclee{2pt}$};
\draw (0,8.6) to (0,7.2);
\draw (-.1,8.6) to (.1,8.6);
\draw (-.1,7.2) to (.1,7.2);
\draw (0,4) to (0,7.2);
\draw (-.1,4) to (.1,4);

\node[font=\tiny, rotate=90] at (-.2,7.8) {$|\tau(K)|$};
\node[font=\tiny, rotate=90] at (-.3,5.5) {$\dfrac{j-1}{2}n-1$};
\node[font=\tiny] at (1.25,2.15) {$b_3$};
\node[font=\tiny] at (1.25,1.6) {$a_3$};
\node[font=\tiny] at (1.6,2.65) {$a_2$};
\node[font=\tiny] at (2.2,2.7) {$a_1$};
\node[font=\tiny] at (2.15,3.25) {$b_1$};
\node[font=\tiny] at (2.5,7.7) {$c$};
\end{tikzpicture}\vspace{.5in}
    \caption{$\tau(K)<0$ $\epsilon(K)=-1$ and $n \leq 2\tau(K)$.}\label{taunegepsilonminus1nleq2tau}
    \end{minipage}
\end{figure}

\begin{figure}[!tbp]
  \centering
  \begin{minipage}[b]{0.3\textwidth}
  \begin{tikzpicture}
\node[anchor=south west,inner sep=0] at (0,0){\includegraphics[width=1\textwidth]{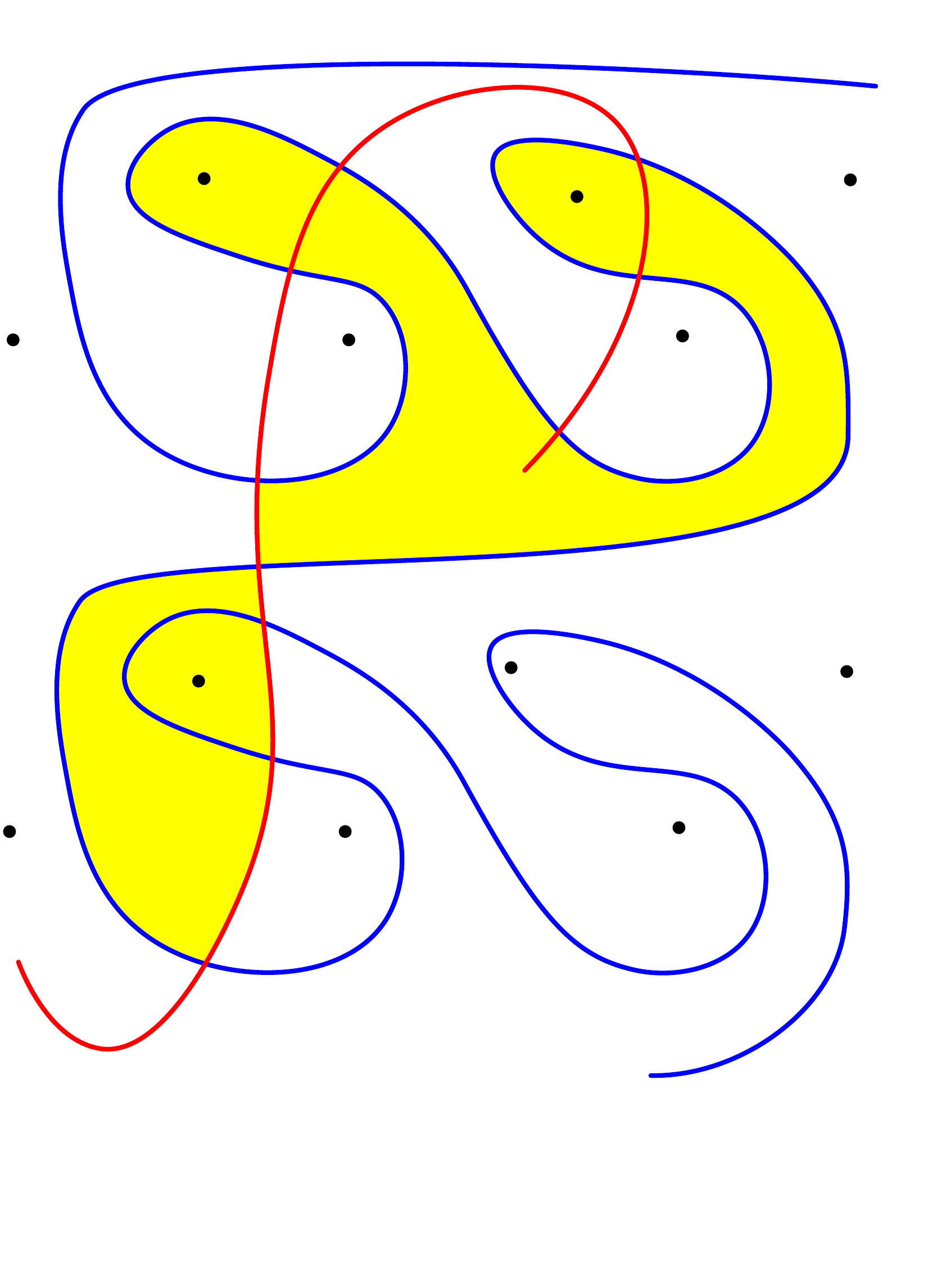}};
\node at (.95,1.45) {$\tikzcirc{2pt}$};
\node at (1.15,3.2) {$\tikzcirc{2pt}$};
\node at (1.2,3.6) {$\tikzcirc{2pt}$};
\node[font=\tiny] at (1,3.45) {$a$};
\end{tikzpicture}
    \caption{The subcomplex that carries the $\F[V]$-free part of the homology before twisting.}\label{subcomplexbeforetwist}
  \end{minipage}
  \hspace{1.3in}
  \begin{minipage}[b]{0.3\textwidth}
  \begin{tikzpicture}
\node[anchor=south west,inner sep=0] at (0,0)    {\includegraphics[width=1\textwidth]{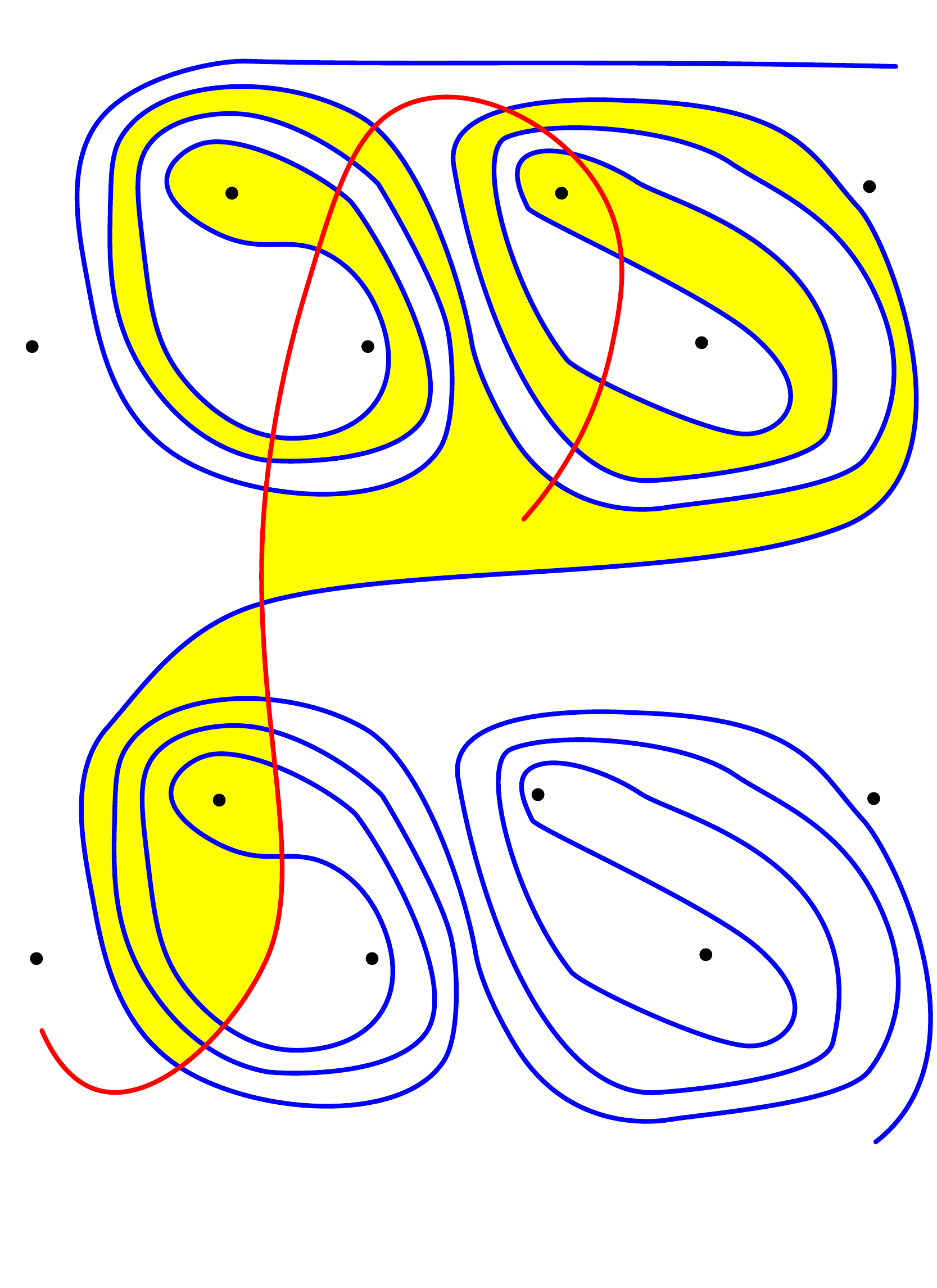}};
\node at (.8,1) {$\tikzcirc{2pt}$};
\node at (1.2,3.1) {$\tikzcirc{2pt}$};
\node at (1.2,3.6) {$\tikzcirc{2pt}$};
\node[font=\tiny] at (1,3.45) {$a$};
\end{tikzpicture}
    \caption{The subcomplex that carries the $\F[V]$-free part of the homology after twisting.}\label{subcomplexaftertwist}
  \end{minipage}
\end{figure}

\begin{lemma}\label{tauindependenti}
    For any $j \in \Z_{>0}$, $n \in \Z$ and $i \in \Z_{\geq 0}$,
    $$\tau(Q^{i,j}_n(K))=\tau(Q^{0,j}_n(K)).$$
\end{lemma}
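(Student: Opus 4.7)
The plan is to proceed by induction on $i$, so that it suffices to show $\tau(Q^{i+1,j}_n(K)) = \tau(Q^{i,j}_n(K))$ for every $i\geq 0$. By the discussion in Section \ref{beta} (and Figures \ref{pretwistbeta}--\ref{posttwistbeta}), the lifted curve $\tbeta(i+1,j)$ is obtained from $\tbeta(i,j)$ by applying the local ``twist-up'' operation once to each of the $j+1$ wave regions. Consequently the pairing diagram $\CFKhat(\talpha(K,n),\tbeta(i+1,j))$ is obtained from $\CFKhat(\talpha(K,n),\tbeta(i,j))$ by this purely local modification of the $\beta$ curve, and the problem reduces to analyzing its effect on the subcomplex carrying the $\F[V]$-free part of the homology.

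Recall from the proof of Theorem \ref{tau} that $\tau(Q^{0,j}_n(K)) = A(a_\ast)$, where $a_\ast$ is a distinguished intersection point that lies in the unstable portion of $\talpha(K,n)$ in the first column of the lifted pairing diagram and that, together with other intersection points along that portion of $\talpha(K,n)$, generates a subcomplex $\mathcal{S}\subset \CFK_{\F[V]}(S^3,Q^{0,j}_n(K))$ carrying the unique $\F[V]$-free summand of $\HFK^-$. In particular, the $a$-generators and the arcs of $\tbeta$ that determine their vertical differentials all sit in the leftmost column of the pairing diagram, whereas the twist-up operation takes place in the interior of each wave region, i.e.\ strictly to the right of where $\mathcal{S}$ is supported.

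My next step would be to show that the new intersection points of $\talpha(K,n)$ with $\tbeta(i+1,j)$ introduced by a single twist-up pair off into acyclic summands of $\CFK_{\F[V]}$. Concretely, each additional rainbow produced by the twist-up creates a pair of adjacent intersection points connected by a short bigon whose only contribution is a length-one vertical (or horizontal) differential, exactly as depicted in Figures \ref{subcomplexbeforetwist} and \ref{subcomplexaftertwist}. Together with the observation that the Alexander grading labels of the \emph{existing} arcs of $\tbeta$ supporting the generators of $\mathcal{S}$ are preserved by the twist-up (an immediate consequence of Lemma \ref{twistupnochange} and the fact that no new lifts of $\delta_{w,z}$ are introduced between $a_\ast$ and the central intersection point $c$), this shows that $\mathcal{S}$ embeds into $\CFK_{\F[V]}(S^3,Q^{i+1,j}_n(K))$ as a direct summand still carrying the $\F[V]$-free part of the homology, with the distinguished generator $a_\ast$ in the same Alexander grading. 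Hence $\tau(Q^{i+1,j}_n(K)) = A(a_\ast) = \tau(Q^{i,j}_n(K))$, completing the inductive step.

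The main obstacle is bookkeeping: the proof of Theorem \ref{tau} splits into many sub-cases according to the signs of $\tau(K)$ and $\epsilon(K)$ and the position of $n$ relative to $2\tau(K)$, and the precise shape of $\mathcal{S}$ and the identity of $a_\ast$ depend on the sub-case. Nonetheless, in every sub-case the generators of $\mathcal{S}$ lie in the first column of the lifted pairing diagram, so in each sub-case the local analysis of twist-up is essentially the same: one checks that the additional intersection points in the interior of each wave region form acyclic pairs and contribute no new element to the $\F[V]$-free summand, while the arc from $a_\ast$ to $c$ on $\tbeta$ is unaffected. Carrying this out in each sub-case yields the lemma.
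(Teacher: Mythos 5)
Your proposal is essentially the same approach as the paper's, which argues by inspection that the subcomplex carrying the $\F[V]$-free part of $\CFK_{\F[V]}$ and the Alexander grading of its distinguished generator are unchanged under twist-up, citing Figures \ref{subcomplexbeforetwist} and \ref{subcomplexaftertwist}. You unpack that brief "inspection" into an explicit inductive step and name the mechanism (new intersections from each twist-up forming acyclic pairs in $\CFK_{\F[V]}$), which is the right reason the claim holds.

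One small caution: your phrase "strictly to the right of where $\mathcal{S}$ is supported" is a bit too strong. The twist-up does produce new intersection points of $\talpha(K,n)$ with $\tbeta(i+1,j)$ adjacent to the distinguished generator $a_\ast$ -- indeed the paper's proof of the companion $\epsilon$-lemma is precisely about new horizontal differentials from such points into $a_\ast$ (see Figures \ref{epsilontwist1}--\ref{epsilontwist4}). The correct statement is not that the new intersections are disjoint from the region of $\mathcal{S}$, but that they either pair off acyclically under $\partial^v$ or contribute only horizontal differentials, so the vertical subcomplex $\mathcal{S}$ and the Alexander grading of $a_\ast$ are unaffected. Your subsequent sentence already makes this more careful claim, so the proof goes through; you should simply drop or soften the "strictly to the right" remark.
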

\begin{proof}
    Inspection of the pairing diagram shows that the intersection points that form a subcomplex of $\CFK_{\F[V]}(S^3,Q^{i,j}_n(K))$ that generates the $\F[V]$ free part of the homology is independent of $i$. That is, twisting up the $\beta$ curve does not change the subcomplex under consideration and as remarked before, does not change the Alexander gradings of the previously existing intersection points. See Figures \ref{subcomplexbeforetwist} and \ref{subcomplexaftertwist}. In particular in all cases the cycle that survives to $\HFhat(S^3)$ and the Alexander grading of that cycle is independent of $i$. 
\end{proof}

\begin{lemma}
    For any $j \in \Z_{>0}$, $n \in \Z$ and $i \in \Z_{\geq 0}$, $$\epsilon(Q^{i,j}_n(K))=\epsilon(Q^{0,j}_n(K)).$$
\end{lemma}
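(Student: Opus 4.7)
The plan is to argue in complete analogy with Lemma \ref{tauindependenti}. Recall that $\epsilon(K')$ is read off from the horizontal ($w$-basepoint) differentials incident to the distinguished element $\xi_0$ of a vertically simplified basis of $\CFK^-(K')$: one has $\epsilon=1$ if $\xi_0$ is the target of a horizontal arrow, $\epsilon=-1$ if it is the source, and $\epsilon=0$ otherwise \cite[Definition 3.4 and Lemma 3.2]{Hom}. Concretely, in each case of the proof of Theorem \ref{tau}, we identified a cycle $\xi_0$ (namely $a_1$ or $\sum_k a_{2k+1}$) in the first column of the lifted pairing diagram that survives as a generator of $\HFhat(S^3)$, together with specific intersection points $b$ or $\{b_{2k+1}\}$ which supply horizontal differentials $\partial^h(b)=U^k\xi_0$ or $\partial^h(\sum b_{2k+1})=U\,\xi_0$, giving $\epsilon(Q^{0,j}_n(K))=1$ in every non-degenerate case.

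First, I would observe, exactly as in the proof of Lemma \ref{tauindependenti}, that going from $\tbeta(i-1,j)$ to $\tbeta(i,j)$ by twisting up only modifies the $\beta$ curve in the ``wave'' regions away from the first column of the lifted pairing diagram (see Figures \ref{pretwistbeta}--\ref{posttwistbeta} and \ref{collapsedbeta}--\ref{collapsedtwistedbeta}). In particular, the intersection points in the first column, which carry the cycle $\xi_0$, and the auxiliary intersection points $b$ (or $b_{2k+1}$) used to witness the horizontal differentials into $\xi_0$, are all preserved under twisting up, together with the bigons between them.

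Second, I would check that twisting up introduces no new horizontal bigons between these old intersection points. Since the $\alpha$ curve is unchanged and the new portions of $\tbeta(i,j)$ lie entirely in the twisted wave regions, any bigon connecting two of the preexisting intersection points must be supported in the untwisted region, and hence already appeared in the pairing diagram for $\tbeta(i-1,j)$. Therefore the subcomplex of $\CFK_{\F[U,V]/UV}(S^3,Q^{i,j}_n(K))$ spanned by these old intersection points, with both its vertical and horizontal differentials, is identical for all $i\geq 0$.

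The main (and essentially only) subtlety is to make sure that none of the newly created intersection points in the twisted region provide a horizontal differential whose image includes $\xi_0$ (which would preserve $\epsilon=1$ but needs to be ruled out as a hazard to the argument). This is immediate from Theorem \ref{Pairing} and the Alexander grading labels on the collapsed $\beta$ curve: the new intersection points, together with their bigons, are confined to Alexander gradings supported in the twisted wave region, so no bigon containing only the $w$ basepoint can cross from a new generator down to $\xi_0$ in the first column. Combining these observations, the vertically simplified basis and the relevant horizontal differentials determining $\epsilon$ are identical for $Q^{i,j}_n(K)$ and $Q^{0,j}_n(K)$, so $\epsilon(Q^{i,j}_n(K))=\epsilon(Q^{0,j}_n(K))$, as desired.
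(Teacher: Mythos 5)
Your argument contains a genuine gap, and in fact makes a false claim about the effect of twisting up. You assert that the new intersection points produced by passing from $\tbeta(i-1,j)$ to $\tbeta(i,j)$ cannot contribute horizontal differentials into the distinguished cycle $\xi_0$, and that this is ``immediate from Theorem \ref{Pairing} and the Alexander grading labels.'' This is not the case. As shown in Figures \ref{epsilontwist1}--\ref{epsilontwist4} of the paper, twisting up the $\beta$ curve does create a new intersection point $b'$ whose horizontal differential hits $a=\xi_0$: when $i\geq 1$ there are two intersection points $b$ and $b'$ with $\partial^h(b)=\partial^h(b')=U^k a$. The twisting happens in the wave regions near the $w$ basepoint, and these regions do interact with the $\alpha$-strand in the first column, so new $w$-crossing bigons ending at $\xi_0$ are exactly what one gets. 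Your proposed localization (``the new intersection points, together with their bigons, are confined to Alexander gradings supported in the twisted wave region'') is not justified and is contradicted by the paper's figures.

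The paper's actual argument instead accepts the presence of the additional incoming horizontal arrow and handles it by a change of basis: set $b_1=b$ and $b_2=b+b'$, so that $\partial^h(b_1)=a$ and $\partial^h(b_2)=0$; in this new horizontally simplified basis $\xi_0$ still lies at the head of a single horizontal arrow, and the pattern persists inductively as $i$ increases. This is what forces $\epsilon(Q^{i,j}_n(K))=\epsilon(Q^{0,j}_n(K))$. Note that it is not enough to observe (as you do in passing) that an extra incoming horizontal arrow ``would preserve $\epsilon=1$'': the definition of $\epsilon$ in \cite[Section 3]{Hom} requires exhibiting a horizontally simplified basis in which $\xi_0$ is the distinguished vertical generator, so one must actually carry out the simplification. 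Your proof as written skips precisely this step, and its justification for doing so is incorrect.
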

\begin{proof}

There are a few cases depending on the shape of essentail component of the curve $\alpha(K,n)$, but the proof is essentially local in nature so we only indicate the local modification to the complex. Consider the case when the intersection point with Alexander grading $\tau(Q^{i,j}_n(K))$ and the vertical subcomplex nearby this intersection point has the form shown in figures \ref{epsilontwist1} and \ref{epsilontwist2}. For example this covers the cases when $\tau(K)\geq 0$ and $\epsilon(K)=1$ and $n \leq 2\tau(K)$ and $\tau(K)\leq 0$, $\epsilon=1$ and $n \leq 2\tau(K)$. When we twist the $\beta$ curve up once, notice that there are now two intersection points $b$ and $b'$ with a horizontal differential to $a$. However, this does not change the computation of $\epsilon$, since we can perform a change of basis, letting $b_1=b$ and $b_2=b+b'$. Then $\partial^h(b_1)=a$ and $\partial^h(b_2)=0$. We see from figures \ref{epsilontwist3} and \ref{epsilontwist4} that this pattern continues for each addition twist we add to the lifted $\beta$ curve. 
    
\end{proof}

\begin{figure}[!tbp]
  \centering
  \begin{minipage}[b]{0.3\textwidth}
  \begin{tikzpicture}
\node[anchor=south west,inner sep=0] at (0,0){\includegraphics[width=1\textwidth]{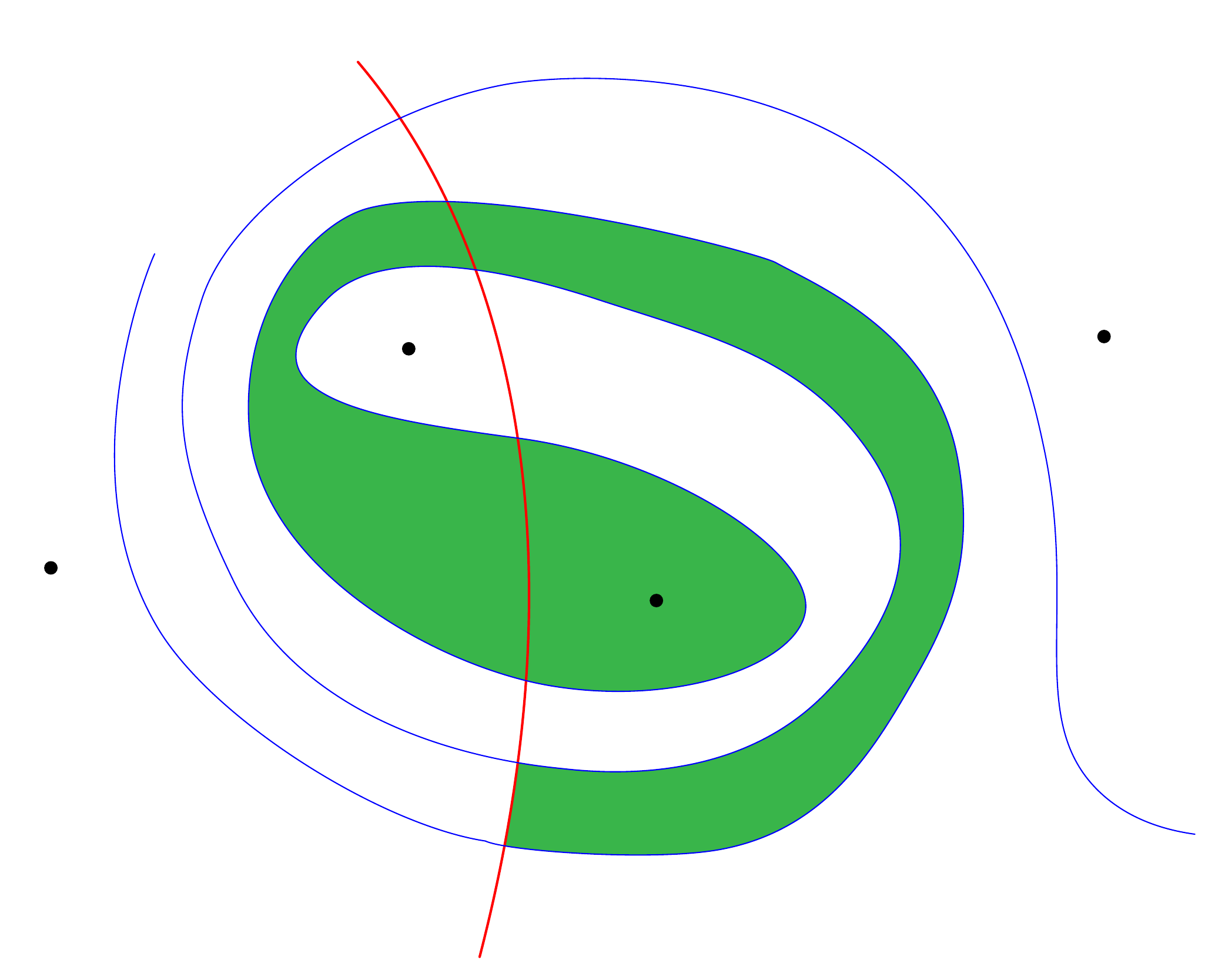}};
\node[font=\tiny] at (.2,1.3) {$w$};
\node[font=\tiny] at (2.1,1.2) {$w$};
\node[font=\tiny] at (1.25,2.1) {$z$};
\node[font=\tiny] at (3.6,2.1) {$z$};
\node[font=\tiny] at (1.75,.45) {$\tikzcirc{2pt}$};
\node[font=\tiny] at (1.8,.75) {$\tikzcirclee{2pt}$};
\node[font=\tiny] at (1.5,.4) {$a$};
\node[font=\tiny] at (1.5,.7) {$b$};
\end{tikzpicture}
    \caption{A horizontal differential to the intersection point that survives the spectral sequence to $\HFhat(S^3)$ when $i=1$}\label{epsilontwist1}
  \end{minipage}
  \hspace{1.3in}
  \begin{minipage}[b]{0.3\textwidth}
  \begin{tikzpicture}
\node[anchor=south west,inner sep=0] at (0,0)    {\includegraphics[width=1\textwidth]{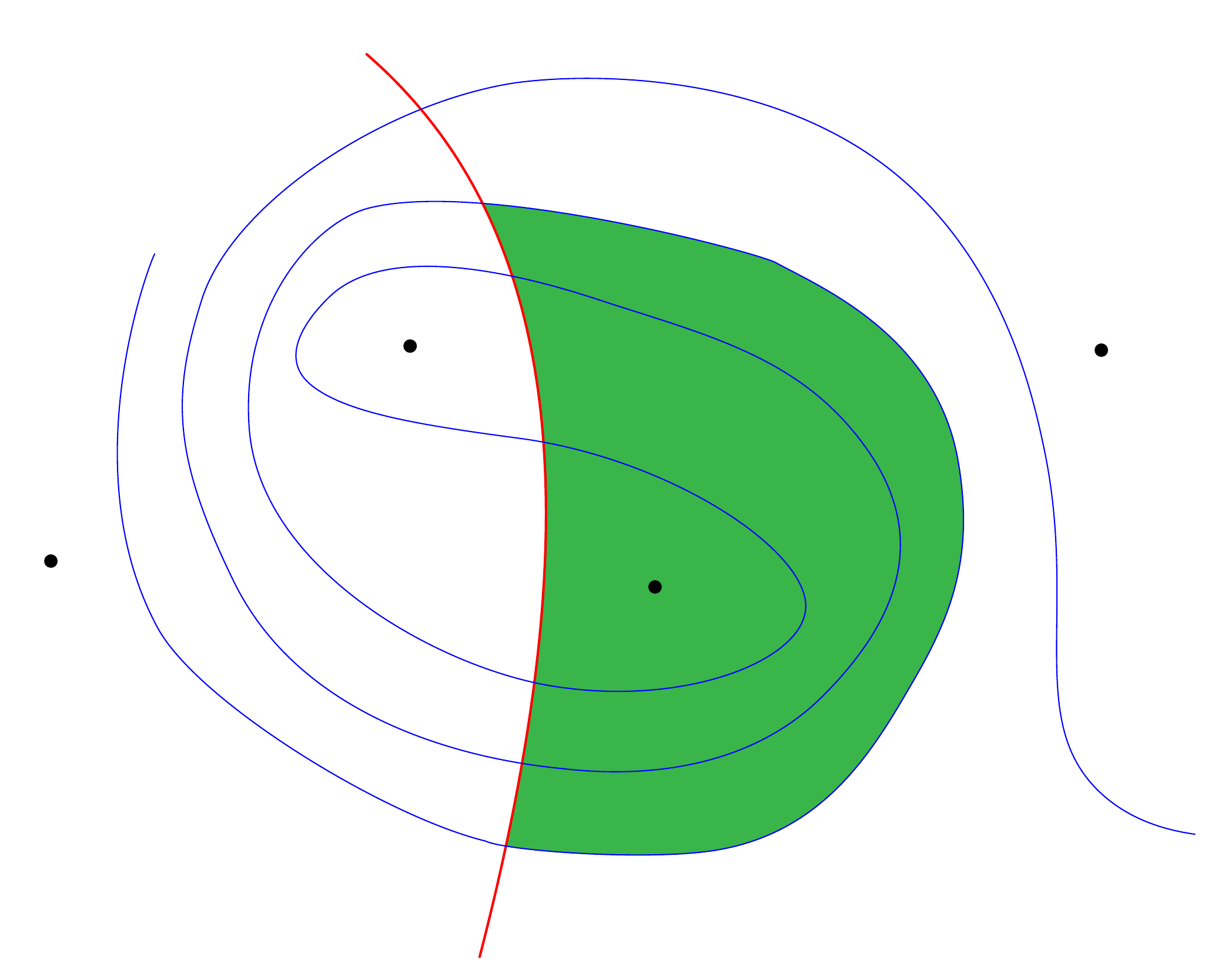}};
\node[font=\tiny] at (.2,1.3) {$w$};
\node[font=\tiny] at (2.1,1.2) {$w$};
\node[font=\tiny] at (1.25,2.1) {$z$};
\node[font=\tiny] at (3.6,2.1) {$z$};
\node[font=\tiny] at (1.75,.45) {$\tikzcirc{2pt}$};
\node[font=\tiny] at (1.65,2.65) {$\tikzcirclee{2pt}$};
\node[font=\tiny] at (1.5,.4) {$a$};
\node[font=\tiny] at (1.4,2.6) {$b'$};
\end{tikzpicture}
    \caption{Another horizontal differential to the intersection point that survives the spectral sequence to $\HFhat(S^3)$ when $i=1$}\label{epsilontwist2}
  \end{minipage}
\end{figure}

\begin{figure}[!tbp]
  \centering
  \begin{minipage}[b]{0.3\textwidth}
  \begin{tikzpicture}
\node[anchor=south west,inner sep=0] at (0,0)
{\includegraphics[width=1\textwidth]{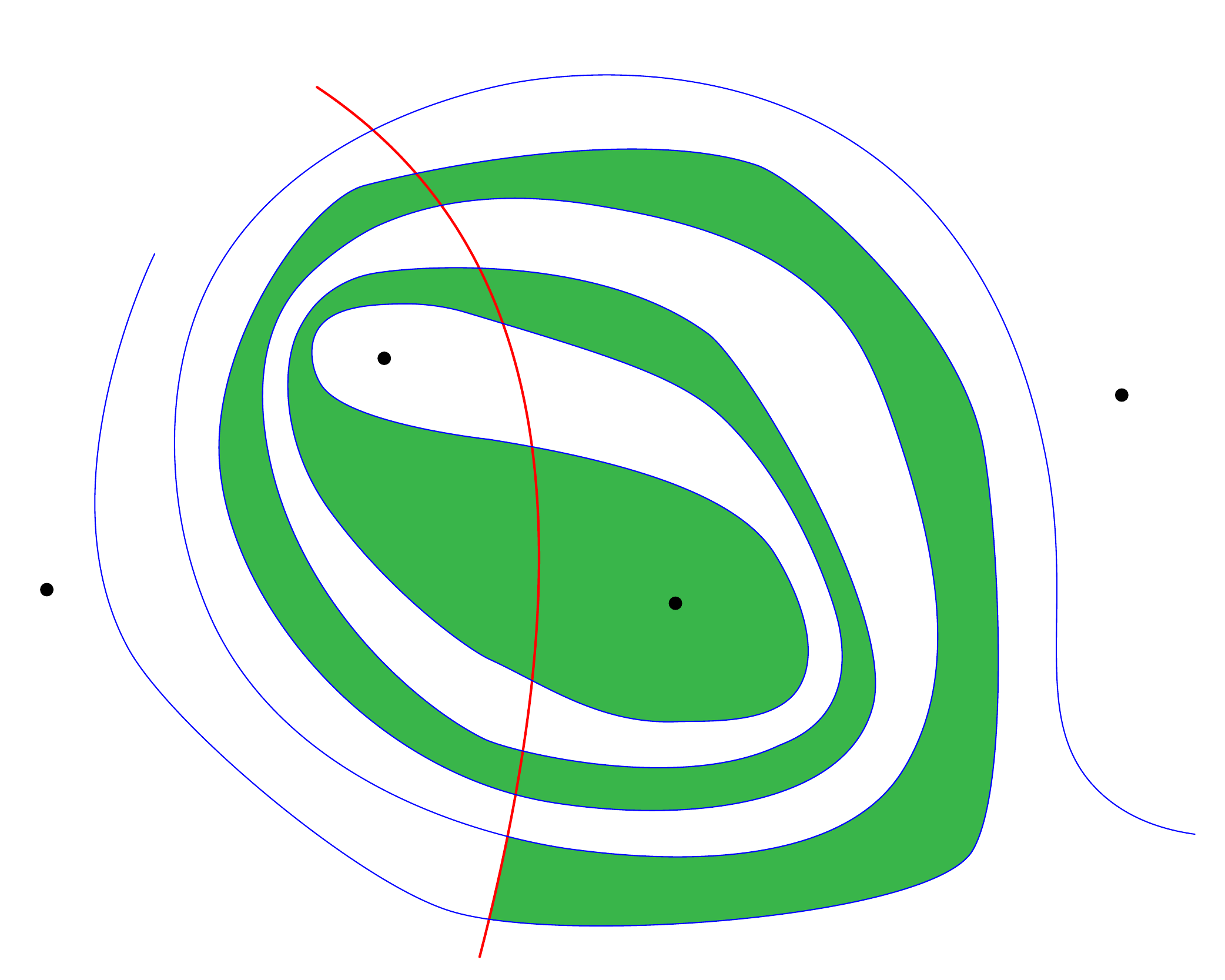}};
\node[font=\tiny] at (0,1.3) {$w$};
\node[font=\tiny] at (2.1,1.2) {$w$};
\node[font=\tiny] at (1.2,2.1) {$z$};
\node[font=\tiny] at (3.6,2.1) {$z$};
\node[font=\tiny] at (1.65,.25) {$\tikzcirc{2pt}$};
\node[font=\tiny] at (1.7,.5) {$\tikzcirclee{2pt}$};
\node[font=\tiny] at (1.4,.2) {$a$};
\node[font=\tiny] at (1.45,.5) {$b$};
\end{tikzpicture}
    \caption{A horizontal differential to the intersection point that survives the spectral sequence to $\HFhat(S^3)$ when $i>1$}\label{epsilontwist3}
  \end{minipage}
  \hspace{1.3in}
  \begin{minipage}[b]{0.3\textwidth}
  \begin{tikzpicture}
\node[anchor=south west,inner sep=0] at (0,0)    {\includegraphics[width=1\textwidth]{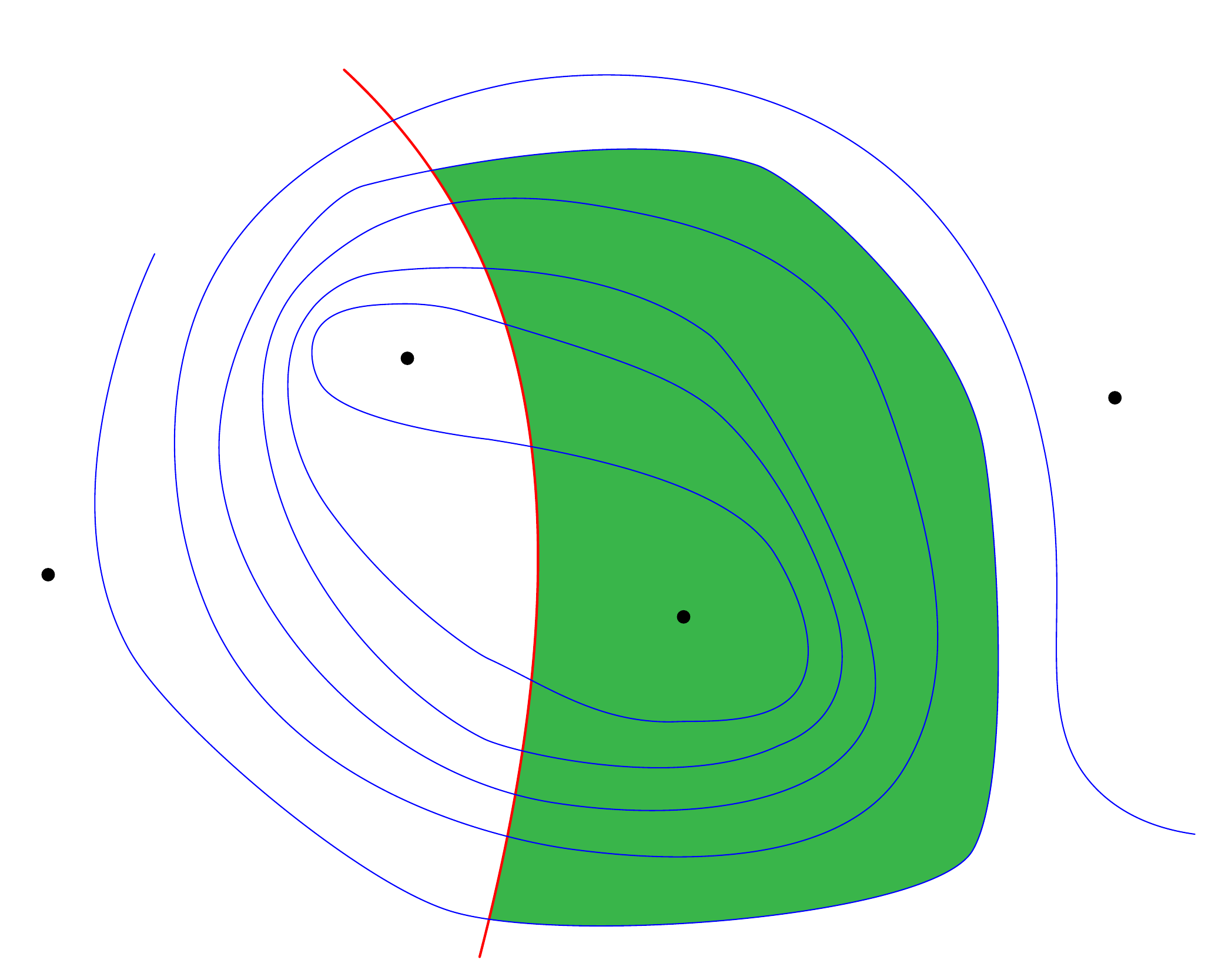}};
\node[font=\tiny] at (0,1.3) {$w$};
\node[font=\tiny] at (2.1,1.2) {$w$};
\node[font=\tiny] at (1.25,2.1) {$z$};
\node[font=\tiny] at (3.6,2.1) {$z$};
\node[font=\tiny] at (1.65,.23) {$\tikzcirc{2pt}$};
\node[font=\tiny] at (1.5,2.8) {$\tikzcirclee{2pt}$};
\node[font=\tiny] at (1.4,.2) {$a$};
\node[font=\tiny] at (1.3,2.85) {$b'$};
\end{tikzpicture}
    \caption{Another horizontal differential to the intersection point that survives the spectral sequence to $\HFhat(S^3)$ when $i>1$}\label{epsilontwist4}
  \end{minipage}
\end{figure}

\bibliography{genmazurbib.bib}

\end{document}